\pgfplotsset{select coords between index/.style 2 args={
    x filter/.code={
        \ifnum\coordindex<#1\fi
        \ifnum\coordindex>#2\fi
    }
}}
\newcommand{\abf}{\ensuremath{\mathbf{a}}}
\newcommand{\bbf}{\ensuremath{\mathbf{b}}}
\newcommand{\cbf}{\ensuremath{\mathbf{c}}}
\newcommand{\dbf}{\ensuremath{\mathbf{d}}}
\newcommand{\fbf}{\ensuremath{\mathbf{f}}}
\newcommand{\gbf}{\ensuremath{\mathbf{g}}}
\newcommand{\zbf}{\ensuremath{\mathbf{z}}}
\newcommand{\Ecal}{\ensuremath{\mathcal{E}}}
\newcommand{\Gcal}{\ensuremath{\mathcal{G}}}
\newcommand{\Lcal}{\ensuremath{\mathcal{L}}}
\newcommand{\Ocal}{\ensuremath{\mathcal{O}}}
\newcommand{\Pcal}{\ensuremath{\mathcal{P}}}
\newcommand{\Tcal}{\ensuremath{\mathcal{T}}}
\newcommand{\Ucal}{\ensuremath{\mathcal{U}}}
\newcommand{\Vcal}{\ensuremath{\mathcal{V}}}
\newcommand{\Zcal}{\ensuremath{\mathcal{Z}}}
\newcommand{\prm}{\ensuremath{\eta}}
\newcommand{\Rbb}{\ensuremath{\mathbb{R}}}
\newcommand{\Pbb}{\ensuremath{\mathbb{P}}}
\newcommand{\Cbb}{\ensuremath{\mathbb{C}}}
\newcommand{\albf}{\bm{\alpha}}
\newcommand{\bebf}{\bm{\beta}}
\newcommand{\bUcal}{\overline{\mathcal{U}}}
\newcommand{\norm}[1]{\left\lVert#1\right\rVert}
\newcommand{\normF}[1]{\left\lVert#1\right\rVert_F}
\newcommand{\seminorm}[1]{|{#1}|}
\newcommand{\Sprm}{\ensuremath{\mathcal{P}}}
\newcommand{\Sprmh}{\ensuremath{\mathcal{P}_h}}
\newcommand{\Vd}[1]{\ensuremath{V_{#1}}}
\newcommand{\nf}{\ensuremath{n}} 
\newcommand{\nr}{\ensuremath{k}} 
\newcommand{\nd}{\ensuremath{m}} 
\newcommand{\nh}{\ensuremath{d}} 
\newcommand{\Nf}{\ensuremath{2\nf}} 
\newcommand{\Nr}{\ensuremath{2\nr}} 
\newcommand{\nt}{\ensuremath{N_t}} 
\newcommand{\ns}{\ensuremath{N_s}} 
\newcommand{\np}{\ensuremath{N_p}} 
\newcommand{\nad}{\ensuremath{N_a}} 
\newcommand{\fs}{\ensuremath{y}} 
\newcommand{\rs}{\ensuremath{z}} 
\newcommand{\hrs}{\ensuremath{z}} 
\newcommand{\redb}{A} 
\newcommand{\deimb}{U} 
\newcommand{\Ham}{\ensuremath{\mathcal{H}}} 
\newcommand{\hn}{\ensuremath{\mathcal{N}}} 
\newcommand{\G}{\ensuremath{{h}}} 
\newcommand{\cv}{\ensuremath{{v}}} 
\newcommand{\dx}{\ensuremath{\mathrm{d}x}}
\DeclareMathOperator*{\argmin}{arg\,min}
\DeclareMathOperator*{\HR}{HR}
\DeclareMathOperator*{\R}{R}
\DeclareMathOperator*{\rank}{rank}
\DeclareMathOperator*{\off}{off}
\DeclareMathOperator*{\on}{on}
\DeclareMathOperator*{\tot}{tot}
\DeclareMathOperator*{\fin}{fin}
\DeclareMathOperator*{\col}{Col}
\crefname{hypothesis}{Hypothesis}{Hypotheses}
\title{Gradient-preserving hyper-reduction of nonlinear dynamical systems via discrete empirical interpolation\thanks{Submitted to the editors on June 2022.
}}
\author{Cecilia Pagliantini\thanks{Centre for Analysis, Scientific computing and Applications,
			  Eindhoven University of Technology (TU/e),
			  The Netherlands.
  (\email{c.pagliantini@tue.nl}, \email{f.vismara@tue.nl}).}
\and Federico Vismara\footnotemark[2]}
\DeclareMathOperator{\diag}{diag}
\begin{document}

\maketitle

\begin{abstract}
This work proposes a hyper-reduction method for nonlinear parametric dynamical systems characterized by gradient fields such as Hamiltonian systems and gradient flows. The gradient structure is associated with conservation of invariants or with dissipation and hence plays a crucial role in the description of the physical properties of the system.
Traditional hyper-reduction of nonlinear gradient fields yields efficient approximations that, however, lack the gradient structure.
We focus on Hamiltonian gradients and we propose to first decompose the nonlinear part of the Hamiltonian, mapped into a suitable reduced space, into the sum of $d$ terms, each characterized by a sparse dependence on the system state. Then, the hyper-reduced approximation is obtained via discrete empirical interpolation (DEIM) of the Jacobian of the derived $d$-valued nonlinear function.
The resulting hyper-reduced model retains the gradient structure and its computationally complexity is independent of the size of the full model. Moreover, \emph{a priori} error estimates show that the hyper-reduced model converges to the reduced model and the Hamiltonian is asymptotically preserved. 
Whenever the evolution of the nonlinear Hamiltonian gradient requires high-dimensional DEIM approximation spaces, an adaptive strategy is performed. This consists in updating the hyper-reduced Hamiltonian via a low-rank correction of the DEIM basis. Numerical tests demonstrate the runtime speedups compared to the full and the reduced models.
\end{abstract}

\begin{keywords}
  adaptive hyper-reduction, discrete empirical interpolation, preservation of gradient structure, nonlinear Hamiltonian systems, symplectic model order reduction
\end{keywords}

\begin{AMS}
  65P10, 37N30, 65L99
\end{AMS}

\section{Introduction}\label{sec:introduction}
We consider parametric dynamical systems where the velocity describing the flow is characterized by nonlinear gradient fields, that is
\begin{equation}\label{eq:ref}
    \dot{\fs}(t,\prm) = X(\fs(t,\prm),\prm)\quad\mbox{for}\;t\geq t_0,
\end{equation}
where $\prm\in\Rbb^{p}$, $p\geq 1$, is a parameter, $\fs:(t_0,\infty)\times\Rbb^{p} \rightarrow\Rbb^{\nf}$ is the state variable, and the velocity field
$X:\Rbb^{\nf}\times\Rbb^{p} \rightarrow\Rbb^{\nf}$ is assumed to
be of the form $X(\fs,\prm)=S\nabla_{\fs}\Ham(\fs,\prm)$, with $S$ in $\Rbb^{\nf\times\nf}$ and $\Ham$ a nonlinear function of the state.
Examples of such dynamical systems are gradient flows, where $S$ is a negative semi-definite matrix, Hamiltonian systems, where $S$ is skew-symmetric, and dissipative Hamiltonian systems, such as port-Hamiltonian systems, where $S=J-R$ is the sum of a skew-symmetric component $J$ and a symmetric positive semi-definite component $R$.
The gradient structure of the velocity field $X$ plays a crucial role in the characterization of the dynamics and of the physical properties of the system since it is associated with dissipation or preservation of quantities such as energy or entropy.

In this work we focus on Hamiltonian dynamical systems and the goal is to develop numerical approximation methods for their efficient solution in large-scale scenarios, i.e. when the number $\nf$ of degrees of freedom is high, and for many instances of the parameter. A major challenge in this task is to preserve the physical properties of the dynamics and, in particular, the Hamiltonian gradient structure of the velocity field.

In the context of large-scale many-query simulations of parametric differential problems, model order reduction (MOR) and reduced basis methods (RBM) have been developed to provide efficient high-fidelity surrogate models on lower-dimensional spaces where the bulk of the dynamics takes place.
Their success notwithstanding, the application of RBM to problems of the form \eqref{eq:ref}
might lead to unstable and qualitatively wrong solution behavior since the gradient structure of the model can be destroyed during dimension reduction.
To address this issue in solving Hamiltonian systems, structure-preserving MOR techniques have been developed in recent years to derive reduced models that retain (at least part of) the geometric structure underlying the dynamics.
The first methods of this class were developed in \cite{LKM03} using a Lagrangian formulation and were later extended to nonlinear parametric Lagrangian systems in \cite{CTB15}; reduced models for canonical Hamiltonian systems on symplectic vector spaces were derived in \cite{PM16} using POD-type strategies and in \cite{AH17} via symplectic greedy algorithms; reduced bases that are symplectic but not orthogonal were proposed in \cite{BBH19,BHR20}; in \cite{GWW17} reduced models were obtained via separate reduced spaces for the generalized positions and momenta; in \cite{UKY21} a double projection of the original problem and of the Hamiltonian gradient into the span of a POD basis is employed; general Hamiltonian systems on Poisson manifolds were tackled in \cite{HP20}; in \cite{SWK21} operator inference techniques are proposed to deal with linear Hamiltonian systems. We refer to \cite{HPRo22} for a detailed overview on structure-preserving MOR for Hamiltonian systems.
Although these methods have led to the successful construction of stable low-dimensional models, little attention has been paid to the efficient treatment of nonlinear operators.
Indeed, in the presence of Hamiltonian functions with general nonlinear dependence on the state, the computational cost of solving the reduced model might still depend on the size of the underlying full model, resulting in simulation times that hardly improve over the original system simulation.
This is a well-known issue in model order reduction and has led to the so-called hyper-reduction methods, which are usually based on approximations of the high-dimensional nonlinear operators using sparse sampling via interpolation among samples.
Traditional hyper-reduction techniques, however, do not preserve gradient fields.
This implies that hyper-reduced models of Hamiltonian systems are no longer Hamiltonian.
As real-world processes tend to be nonlinear,
a lack of efficient and physically compatible dimension reduction of general nonlinearities comes to the fore.

To the best of our knowledge, only a few works have considered hyper-reduction methods aimed at preserving Hamiltonian structures \cite{CTB15,FCA15,CBG16, Wang21,HPR22}.
Although these techniques introduce interesting ideas, they appear tailored to specific problems or lack a rigorous theoretical analysis of the suggested approximations.
In \cite{CTB15} the reduced Hamiltonian is approximated with a Taylor polynomial expansion truncated at the second term. As reported by the authors, the scheme is only effective for asymptotically stable systems and when the expansion is performed around an equilibrium point. These assumptions rule out many important cases and are generally not met by evolution problems stemming from the semi-discretization of partial differential equations.
A cubature approach, the Energy-Conserving Sampling and Weighting (ECSW) scheme, is presented in \cite{FCA15}: the nonlinear vector field obtained from the semi-discretization of a Hamiltonian PDE is approximated with a weighted average of the field components on a coarser mesh. The method is, however, limited to finite element discretization of Hamiltonian PDEs, and requires a very expensive offline phase, especially for parametric problems.
A variation of the discrete empirical interpolation method (DEIM) \cite{BMNP04,CS10} has been proposed in \cite{CBG16}: the nonlinear Hamiltonian gradient is approximated in the space where the DEIM projection is orthogonal. The Hamiltonian structure is preserved since orthogonal projections preserve gradients, but the method does not guarantee that the resulting approximation is accurate.
A suitable combination of DEIM and dynamic mode decomposition (DMD) is proposed in \cite{HPR22} to deal with the nonlinear operators of the Hamiltonian systems resulting from a particle-based discretization of the Vlasov--Poisson kinetic plasma model.
In a recent preprint \cite{Wang21}, DEIM hyper-reduction is applied to a nonlinear vector-valued function $G$ obtained by decomposing the Hamiltonian into the Euclidean product of $G$ with a constant vector. The gradient of the resulting operator provides an approximation of the original Hamiltonian gradient but there is no guarantee on the accuracy of such approximation nor on asymptotic convergence as the DEIM space is enlarged.

In this work we propose a novel gradient-preserving hyper-reduction strategy to construct surrogate models that: (i) are Hamiltonian; (ii) can be solved at a computational cost independent of the size of the full model; (iii) are provably accurate; and (iv) ensure
preservation of the full order Hamiltonian asymptotically.
The proposed approach targets the \emph{reduced} nonlinear operator, in contrast to traditional hyper-reduction techniques where the nonlinear high-dimensional function is approximated first and then the resulting approximation is projected onto a reduced space. In particular,
we first map the full order nonlinear Hamiltonian gradient into the reduced space via a structure-preserving (here symplectic) projection.
Then, the nonlinear part of the reduced Hamiltonian is written as the sum of $\nh$ terms, where $\nh$ is typically of the order of the size of the full model.
Although a similar decomposition is used in \cite{Wang21}, here
we do not approximate the resulting $\nh$-valued nonlinear function but rather the Hamiltonian gradient. This is done by projecting the Jacobian of the nonlinear function using discrete empirical interpolation (DEIM) \cite{BMNP04,CS10}.
We derive \emph{a priori} error estimates
of the error between the full model solution and its hyper-reduced approximation and of the error in the conservation of the Hamiltonian. These results show that the hyper-reduced model is asymptotically as accurate as the reduced model. Since the target accuracy is the one of the reduced model, the hyper-reduction allows to obtain accurate solutions and to ensure exact Hamiltonian conservation with DEIM sizes much smaller than $\nh$ and, hence, at a significantly reduced computational cost.

Whenever the dynamics of the problem does not allow for small DEIM approximations and, hence, enough computational savings, we let the proposed gradient-preserving hyper-reduction change in time. We derive an adaptive approach that extends to gradient fields the
adaptive discrete empirical interpolation method (ADEIM) introduced in \cite{Peher15}.
The DEIM basis is updated in time with the low-rank factor that minimizes a residual of the nonlinear Jacobian at suitably chosen sampling points.
In particular, we extend the method of \cite{Peher15} to updates of general rank and to matrix-valued nonlinear terms. We also derive an algorithm for the optimal choice of the sampling points. A detailed analysis of the computational complexity of the adaptive hyper-reduction shows that, if the adaptation is implemented in an efficient way, the complexity reduction due to a smaller DEIM basis (compared to the non-adaptive algorithm) outweighs the extra computational cost of constructing the update.

The remainder of the paper is organized as follows.
In \Cref{sec:ROM} we introduce Hamiltonian dynamical systems and discuss their symplectic model order reduction. \Cref{sec:HR_model}
concerns the structure-preserving DEIM hyper-reduction of the Hamiltonian gradient.
\Cref{sec:adaptive_DEIM} is devoted to the adaptive gradient-preserving hyper-reduction method and a summary of the scheme is presented in \Cref{alg:SP-ADEIM}.
In \Cref{sec:numexp} the proposed methods are tested on a set of numerical experiments. Some conclusions and open questions are presented in \Cref{sec:conclusions}.

\section{Full order system and its model order reduction}
\label{sec:ROM}
Let $\Tcal:=(t_0,T]\subset\Rbb$ be a temporal interval, let
$\Sprm\subset \mathbb{R}^p$, with $p\geq 1$,
be a compact set of parameters, and let $\Vd{\Nf}\subset\Rbb^{\Nf}$ be the phase space.
For each $\prm\in\Sprm$,
we consider the Hamiltonian dynamical system:
given $\fs^0(\prm)\in \Vd{\Nf}$, find $y(\cdot,\prm)\in C^1(\Tcal;\Vd{\Nf})$ such that
\begin{equation}\label{eq:full_model}
\left\{\begin{aligned}
   & \Dot{\fs}(t,\prm) = J_{\Nf}\nabla_\fs \Ham(\fs(t,\prm),\prm), & t\in\Tcal, \\
   & \fs(t_0,\prm)=\fs^0(\prm), &
\end{aligned}\right.
\end{equation}
where $\fs:\Tcal\times \Sprm\rightarrow \Vd{\Nf}$ is the state variable and the matrix $J_{\Nf}\in\Rbb^{\Nf\times\Nf}$, the so-called canonical Poisson tensor, is defined as
\begin{equation*}
    J_{\Nf}=\begin{pmatrix} 0_\nf & I_\nf \\ -I_\nf & 0_\nf \end{pmatrix}.
\end{equation*}
The function $\Ham:\Vd{\Nf}\times\Sprm\to\Rbb$ is the Hamiltonian of the system and we write it in the following general form
\begin{equation}\label{eq:Ham}
    \Ham(\fs,\prm) = \frac{1}{2}\fs^{\top}L(\prm)\fs+\fs^\top f(\prm)+\Gcal(\prm) +\hn(\fs,\prm),
\end{equation}
where
$L(\prm)\in\Rbb^{\Nf\times\Nf}$ is a symmetric positive semi-definite matrix, $f(\prm)\in\Rbb^\nf$, $\Gcal(\prm)\in\Rbb$ and $\hn(\cdot,\prm)$ is a nonlinear function of the state variable. The Hamiltonian is a conserved quantity of system \eqref{eq:full_model}, namely it remains constant along solution trajectories. Moreover, the phase space of Hamiltonian systems has a symplectic geometric structure. Here we focus on symplectic vector spaces: this means that the space $\Vd{\Nf}$ can be endowed with a local basis $\{e_i\}_{i=1}^{\Nf}$ which is symplectic and orthonormal \cite[Chapter 12]{CdS01}, that is	$e_i^\top J_{\Nf} e_j=(J_{\Nf})_{i,j}$
and $(e_i,e_j)=\delta_{i,j}$ for all $i,j=1\ldots,\Nf$,
where $(\cdot,\cdot)$ is the Euclidean inner product.
We refer to \cite[Chapter 10]{MR99} for a detailed introduction to Hamiltonian systems.
To ensure well-posedness of \eqref{eq:full_model} we
assume that, for each parameter $\prm\in\Sprm$, the Hamiltonian gradient $\nabla_{\fs}\Ham(\cdot,\prm)$ is Lipschitz continuous in the Euclidean norm uniformly with respect to time.

Throughout the paper, we use the symbols $\norm{\cdot}$ and $\norm{\cdot}_2$ to denote the vector and matrix $2$-norm, respectively, while $\norm{\cdot}_F$ refers to the Frobenius norm.

\subsection{Symplectic model order reduction}
When the number $\nf$ of degrees of freedom in \eqref{eq:full_model} is large and the system needs to be solved for many instances of the parameter $\prm$,
model order reduction can be used to reduce the complexity of the original system and thus to speed up the 
resulting numerical simulations. Among model order reduction techniques, we consider reduced basis methods, which construct an approximation space of low dimension, the so-called \emph{reduced space}, from a collection of simulation data corresponding to full order solutions at sampled values of time and parameters. A low-dimensional model is then obtained via a suitable projection of the full order dynamics onto the reduced space.

More in details, the state $y$ is approximated as
\begin{equation*}
    y_{rb}(t,\prm)=\sum_{i=1}^{\Nr} z_i(t,\prm) a_i=Az(t,\prm),\qquad\mbox{for all}\; t\in\Tcal,\,\prm\in\Sprm,
\end{equation*}
where $\nr\ll\nf$, $\redb=[a_1\,\ldots\,a_{\Nr}]\in\Rbb^{\Nf\times\Nr}$ is the reduced basis and $\rs(t,\prm)\in\Rbb^{\Nr}$ is the vector of expansion coefficients.
In order to preserve the Hamiltonian structure of the full model, the matrix $\redb$ is enforced to be symplectic, that is, $\redb^{\top}J_{\Nf}\redb=J_{\Nr}$, and orthogonal, that is, $\redb^{\top}\redb=I_{\Nr}$. If $\redb$ is an orthogonal symplectic matrix, then  $\redb^{\top}J_{\Nf}=J_{\Nr}\redb^{\top}$, see e.g. \cite[Lemma 3.3]{PM16}.
Projecting the full order model \eqref{eq:full_model} onto the space $\Vd{\Nr}$ spanned by the columns of $\redb$ yields the reduced model:
for any $\prm\in\Sprm$, find $\rs(\cdot,\prm)\in C^1(\Tcal;\Vd{\Nr})$ such that
\begin{equation}\label{eq:red_model}
\left\{
\begin{aligned}
    & \Dot{\rs}(t,\prm)=J_{\Nr}\nabla_\rs \Ham_r(\rs(t,\prm),\prm), & t\in\Tcal, \\ & \rs(t_0,\prm)=\rs^0(\prm):=\redb^\top \fs^0(\prm), &
\end{aligned}\right.
\end{equation}
where the reduced Hamiltonian is obtained from \eqref{eq:Ham} as
\begin{equation*}
    \Ham_r(\rs,\prm):=\Ham(\redb\rs,\prm)=\frac{1}{2}\rs^\top L_r(\prm)\rs+\rs^\top f_r(\prm)+ \Gcal(\prm) + \hn_r(\rs,\prm),
\end{equation*}
with $L_r(\prm):=\redb^\top L(\prm)\redb\in\Rbb^{\Nr\times\Nr}$, $f_r(\prm):=\redb^\top f(\prm)\in\Rbb^{\Nr}$ and $\hn_r := \hn\circ\redb$.

An orthogonal and symplectic reduced basis can be constructed from a set of
full model solutions using SVD-type strategies, such as cotangent lift, complex SVD, or nonlinear programming \cite{PM16}, or with the symplectic greedy algorithm of~\cite{AH17}.

\section{Hyper-reduction of the Hamiltonian gradient}\label{sec:HR_model}

Assuming the affine separability of the operators $L(\prm)$ and $f(\prm)$ in \eqref{eq:Ham}, the major computational cost in solving the reduced model \eqref{eq:red_model} comes from the nonlinear term $\hn_r$ of the Hamiltonian.
If standard hyper-reduction techniques are applied to the reduced Hamiltonian gradient $\nabla_z\Ham_r$, the resulting approximate function is generally no longer a gradient field, which might lead to unstable or inaccurate approximations. Numerical evidence of this behavior is reported e.g. in \cite{PM16}.
To address this shortcoming we first consider a decomposition of the nonlinear part of the reduced Hamiltonian as
\begin{equation}\label{eq:decomp}
    \hn_r(\rs,\prm) = \sum_{i=1}^\nh \cv_i\G_i(A\rs,\prm)=\cv^{\top}\G(A\rs,\prm),\qquad\mbox{for all}\; \rs\in\Vd{\Nr},\,\prm\in\Sprm,
\end{equation}
where $d\in\mathbb{N}$, $\cv\in\Rbb^\nh$ is a constant vector and $\G:\Vd{\Nf}\times\Sprm\to\Rbb^\nh$.
A decomposition of the Hamiltonian of the form \eqref{eq:decomp} has appeared also in \cite{Wang21} and holds in many cases of interest. For example, the Hamiltonian given by the energy of a system of particles can be written as the sum of the contribution of each particle; or, if \eqref{eq:full_model} results from the semi-discretization of a PDE via a local approximation, e.g. finite element or finite volume schemes, $\G_i$ represents the contribution of the $i$th mesh element to the total Hamiltonian. We refer to \cite{MR99} and \cite[Chapter I]{HLW06} for several examples of Hamiltonian in this form and we will further comment on this aspect in \Cref{rmk:decomp}.

Using the decomposition \eqref{eq:decomp}, the gradient of the nonlinear term $\hn_r$ reads
\begin{equation}\label{eq:redgrad}
    \nabla_\rs \hn_r(\rs,\prm)
    =\redb^{\top}J_{\prm,\G}^{\top}(\redb\rs)\cv,\qquad\mbox{for all}\; \rs\in\Vd{\Nr},\,\prm\in\Sprm,
\end{equation}
where $J_{\prm,\G}(\cdot)\in\Rbb^{\nh\times\Nf}$ is the Jacobian of $\G(\cdot,\prm)$.
The idea is then to use discrete empirical interpolation to approximate the term $J_{\prm,\G}(\redb\cdot)\redb\in\Rbb^{\nh\times\Nr}$, namely the Jacobian mapped to the reduced space. The projected reduced Jacobian is
\begin{equation}\label{eq:DEIMproj}
\Pbb J_{\prm,\G}(\redb\cdot)\redb,
\qquad\mbox{where}\qquad
\Pbb:=\deimb(P^\top\deimb)^{-1}P^\top\in\Rbb^{\nh\times\nh}.
\end{equation}
Here, $\deimb\in\Rbb^{\nh\times\nd}$ is the so-called DEIM basis with $\nd\ll\nh$, and
$P:=[\mathbf{e}_{\beta(1)},\dots,\mathbf{e}_{\beta(\nd)}]$, where $\mathbf{e}_i$ is the $i$th unit vector of $\mathbb{R}^{\nh}$ and $\{\beta(1),\dots,\beta(\nd)\}\subset\{1,\dots,\nh\}$ are interpolation indices \cite{CS10}.
The DEIM projection introduced in \eqref{eq:DEIMproj} can be equivalently seen as an approximation of the nonlinear term $\hn_r$ in \eqref{eq:decomp} by
\begin{equation}\label{eq:hhr_and_gradhhr}
    \hn_{hr}(\rs,\prm):=\cv^\top\Pbb \G(\redb\rs,\prm)\qquad\mbox{for all}\; \rs\in\Vd{\Nr},\,\prm\in\Sprm.
\end{equation}
With the nonlinear term of the Hamiltonian approximated as in \eqref{eq:hhr_and_gradhhr}, the resulting hyper-reduced model reads: for any $\prm\in\Sprm$, find $\hrs(\cdot,\prm)\in C^1(\Tcal;\Vd{\Nr})$ such that
\begin{equation}\label{eq:Hyperred_model}
\left\{
\begin{aligned}
    & \Dot{\hrs}(t,\prm)=J_{\Nr}\nabla_\hrs \Ham_{hr}(\hrs(t,\prm),\prm), & t\in\Tcal, \\
    & \hrs(t_0,\prm) = \hrs^0(\prm):=\redb^\top \fs^0(\prm),
\end{aligned}\right.
\end{equation}
The dynamical system \eqref{eq:Hyperred_model} is still Hamiltonian,
with the hyper-reduced Hamiltonian defined as
\begin{equation*}
    \Ham_{hr}(\hrs,\prm)=\frac{1}{2}\hrs^\top L_r(\prm)\hrs+\hrs^\top f_r(\prm)+ \Gcal(\prm) + \hn_{hr}(\hrs,\prm),
\end{equation*}
and its gradient is given by
$\nabla_\hrs \Ham_{hr}(\hrs,\prm) = L_r(\prm)\hrs + f_r(\prm) + \redb^{\top}J_{\prm,\G}^{\top}(\redb\rs)\Pbb^{\top}\cv$.

\begin{remark}\label{rmk:decomp}
A key role in this approach is played by the decomposition \eqref{eq:decomp}, which is not unique, and by the choice of $\nh$. 
Indeed, let us assume that, for fixed $\hrs\in\Vd{\Nr}$ and $\prm\in\Sprm$, each entry of $\G(\fs,\prm)$, with $\fs:=A\hrs$, depends on $s_1\leq\nf$ components among the first $\nf$ entries of $\fs$ and on $s_2\leq\nf$ components among the last $\nf$ entries of $\fs$. Then, every row of $J_{\prm,\G}$ has at most $s_1+s_2$ non-zero elements. This implies that the evaluation of the gradient \eqref{eq:redgrad} of the reduced Hamiltonian has complexity $O((s_1+s_2)\nh\nr)$, while the cost of computing the gradient of the hyper-reduced Hamiltonian \eqref{eq:hhr_and_gradhhr} is of order $O((s_1+s_2)\nd\nr)$.
Based on this observation, one might be tempted to choose a decomposition \eqref{eq:decomp} with a small $\nh$ and bypass the hyper-reduction entirely.
This choice is however often associated with large values of $s_1$ and $s_2$ since typically $(s_1+s_2)\nh$ is, at least, of the order of $\nf$.
The choice of the decomposition should instead be driven as to maximize the sparsity of the Jacobian matrix $J_{\prm,\G}\in\Rbb^{\nh\times\Nf}$, hence maximizing the efficacy of the hyper-reduction.
We refer to the numerical experiments in \Cref{sec:numexp} for concrete examples of such decomposition.
\end{remark}

\subsection{Construction of the DEIM projection}
\label{sec:ConstrDEIM}
As anticipated in \eqref{eq:DEIMproj}, we propose an approximation that minimizes the DEIM projection error of the reduced Jacobian, $J_{\prm,\G}(\redb\cdot)\redb\in\Rbb^{\nh\times\Nr}$, once the reduced basis $\redb$ has been constructed.
The algorithm of \cite{Wang21} seeks instead to approximate the factor $\G$ in \eqref{eq:decomp},
but this comes with no guarantee on the accuracy of the approximation of the reduced Hamiltonian gradient and, in turn, on the quality of the hyper-reduced model \eqref{eq:Hyperred_model}.

To build the DEIM projection, we compute snapshots in the reduced space $\Vd{\Nr}$ as $\{\redb^\top \fs^{\ell_i}(\prm_j)\}_{i,j}$ for $j=1,\dots,\np$ and $i=1,\ldots,\ns$, where $\fs^{\ell_i}(\prm_j)$ is the full model solution at time $t^{\ell_i}$ and parameter $\prm_j$. We collect the corresponding snapshots of the reduced Jacobian in the matrix
\begin{equation}\label{eq:snaps_JG}
    M_{J}=[J_{\prm_1,\G}(\redb\redb^{\top}\fs^{\ell_1}(\prm_1))\redb\,\ldots\, J_{\prm_{\np},\G}(\redb\redb^{\top}\fs^{\ell_{\ns}}(\prm_{\np}))\redb]\in\Rbb^{\nh\times\Nr \ns \np}.
\end{equation}
The DEIM basis $\deimb\in\Rbb^{\nh\times\nd}$ in \eqref{eq:DEIMproj} is obtained from $M_{J}$ via proper orthogonal decomposition (POD). Observe that the snapshot matrix $M_{J}$ can have a potentially large number of columns: to curb the high computational cost to derive the DEIM basis from $M_{J}$, one could make use of randomized algorithms \cite{HMT11}.

Once the DEIM basis is fixed, there are several ways to select the DEIM interpolation indices so that $P^\top\deimb$ is non-singular. In this work we focus on the greedy algorithm of \cite[Algorithm 1]{CS10}. An alternative choice is, for example, QDEIM \cite{DG16} which is based on a QR factorization with column pivoting of the matrix $\deimb$.

\begin{remark}\label{rmk:FOMROMJ}
In principle, one may approximate the full Jacobian $J_{\prm,\G}$ rather than the reduced Jacobian $J_{\prm,\G}(\redb\cdot)\redb$. However, there are (at least) two major issues in dealing with the full Jacobian.
First, the snapshot matrix $M_J$ has $\Nf \ns\np$ columns instead of $\Nr \ns\np$, where $\nf\gg\nr$. In many applications, already for one-dimensional problems, this matrix is too large to be stored, and the cost of computing its singular values and vectors is prohibitive.
This is particularly relevant in the adaptive approach, see \Cref{sec:adaptive_DEIM}, when the DEIM projection is updated online.
Second, the full Jacobian $J_{\prm,\G}$ usually exhibits diagonal (or sparsity) patterns so that the associated snapshot matrix has (almost) full rank. This implies that one has to select $\nd\approx\nh$ DEIM basis functions to achieve a sufficiently accurate approximation, making hyper-reduction basically ineffective. On the contrary, when the reduced Jacobian is considered, it is found in practice that the singular values of $M_J$ exhibit a much faster decay, which allows for an efficient hyper-reduction. We refer to \Cref{fig:singvalues} in \Cref{sec:NLS} for a numerical example that shows this behavior.
\end{remark}

\subsection{\emph{A priori} convergence estimates}\label{sec:errestimate}
The error between the full order and hyper-reduced solution can be bounded by the error between the full order solution and its projection onto the reduced space and by the DEIM projection error of the reduced Jacobian.
The following theorem extends to our setting the result in \cite[Theorem 3.1]{CS12}.

\begin{theorem}\label{error}
For any $\prm\in\Sprm$,
let $\fs(\cdot,\prm)\in C^1(\Tcal,\Vd{\Nf})$ be the solution of the full model \eqref{eq:full_model}
and let
$\hrs(\cdot,\prm)\in C^1(\Tcal,\Vd{\Nr})$ be the solution of the hyper-reduced system \eqref{eq:Hyperred_model}.
Let $\Pbb$ be the DEIM projection \eqref{eq:DEIMproj}.
Assume that, for every $\prm\in\Sprm$, the Jacobian $J_{\prm,\G}$ of $\G$ \eqref{eq:decomp} is Lipschitz continuous in the $2$-norm with constant $\Lcal_{\G}(\prm)$. Then,
\begin{equation}\label{eq:error_bound_sol}
\begin{aligned}
	\norm{\fs-\redb\hrs}^2_{L^2(\Tcal\times\Sprm;\Vd{\Nf})} \leq 
	&\, C_1(T)\norm{\fs-\redb\redb^\top \fs}^2_{L^2(\Tcal\times\Sprm;\Vd{\Nf})}\\
    & + C_2(T)
	\norm{(I-\Pbb)J_{\prm,\G}(\redb\redb^{\top}\fs)\redb}^2_{L^2(\Tcal\times\Sprm;\Vd{\Nf})},
\end{aligned}
\end{equation}
where
$C_1(T):=\Delta\Tcal \max_{\prm\in\Sprm}\big(C_\prm(T)\alpha^2(\prm)\big)+1$
and
$C_2(T):=\Delta\Tcal\,\norm{\cv}^2 \max_{\prm\in\Sprm} C_\prm(T)$,
\begin{equation*}
    C_{\prm}(T):=
    \begin{cases}
    \beta^{-1}(\prm)\big(e^{2\beta(\prm)(T-t_0)}-1\big), &\quad \mbox{if}\;\beta(\prm)\neq 0, \\
    2(T-t_0), &\quad \mbox{if}\;\beta(\prm) =0,
    \end{cases}
\end{equation*}
with $\alpha(\prm) := \norm{\redb^\top K(\prm)}_2 + \Lcal_{\hn}(\prm)$, $\Lcal_{\hn}(\prm)$ Lipschitz continuity constant of $\nabla\hn(\cdot,\prm)$,
$\beta(\prm):=\mu(K(\prm)) + \Lcal_{\G}(\prm)\norm{(P^\top \deimb)^{-1}}_2\norm{\cv}$, and
$\mu(K(\prm))$ logarithmic norm of $K(\prm):=J_{\Nf}L(\prm)$ with respect to the 2-norm, i.e., $\mu(K) := \sup_{x\neq 0}\frac{(x,Kx)}{(x,x)}$.
\end{theorem}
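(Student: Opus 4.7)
The plan is to adapt the standard DEIM-based error analysis of Chaturantabut--Sorensen (as in \cite{CS12}) to the gradient-preserving symplectic setting by exploiting the orthogonal splitting induced by the reduced basis $\redb$ and then controlling the remaining coefficient error via a differential inequality. The first move is to decompose
\begin{equation*}
    \fs - \redb\hrs = (I-\redb\redb^\top)\fs + \redb\, e,\qquad e(t,\prm):=\redb^\top\fs(t,\prm)-\hrs(t,\prm),
\end{equation*}
and, using $\redb^\top\redb=I_{\Nr}$, to note that the two summands are orthogonal, so $\|\fs-\redb\hrs\|^2 = \|(I-\redb\redb^\top)\fs\|^2 + \|e\|^2$. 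The first summand is already present on the right-hand side of \eqref{eq:error_bound_sol}, so only $\|e\|^2$ requires work.

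Next I would derive an evolution equation for $e$ by subtracting \eqref{eq:Hyperred_model} from $\redb^\top$ applied to \eqref{eq:full_model}. Since $\redb$ is orthogonal and symplectic, $\redb^\top J_{\Nf}=J_{\Nr}\redb^\top$ and $L_r=\redb^\top L\redb$, which after splitting $\fs-\redb\hrs$ as above yields
\begin{equation*}
    \dot e = K_r\, e + J_{\Nr}\redb^\top L(I-\redb\redb^\top)\fs + J_{\Nr}\redb^\top\bigl[\nabla\hn(\fs) - J_{\prm,\G}^\top(\redb\hrs)\Pbb^\top\cv\bigr],
\end{equation*}
with $K_r=\redb^\top K\redb$. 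The key step is then to rewrite the nonlinear bracket, using $\nabla\hn(\fs)=J^\top_{\prm,\G}(\fs)\cv$, as a telescoping sum by inserting $\pm J^\top_{\prm,\G}(\redb\redb^\top\fs)\cv$ and $\pm J^\top_{\prm,\G}(\redb\redb^\top\fs)\Pbb^\top\cv$. This produces three contributions: a Lipschitz term in $\fs-\redb\redb^\top\fs$ (bounded by $\Lcal_{\hn}\|(I-\redb\redb^\top)\fs\|$), the DEIM projection residual $J^\top_{\prm,\G}(\redb\redb^\top\fs)(I-\Pbb^\top)\cv$, and a Lipschitz term in $\redb\redb^\top\fs-\redb\hrs$ of magnitude $\Lcal_{\G}\|\Pbb\|_2\|\cv\|\|e\|$. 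The crucial identity is $\redb^\top J^\top_{\prm,\G}(\redb\redb^\top\fs)(I-\Pbb^\top) = [(I-\Pbb)J_{\prm,\G}(\redb\redb^\top\fs)\redb]^\top$, which is exactly what makes the reduced Jacobian DEIM residual appear in \eqref{eq:error_bound_sol}. I would then use $\|\Pbb\|_2\leq\|(P^\top\deimb)^{-1}\|_2$.

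Taking the inner product with $e$ and using $(e,K_re)\leq\mu(K_r)\|e\|^2\leq\mu(K)\|e\|^2$ (since $\redb^\top\redb=I$), together with $\|\redb^\top L\|_2=\|\redb^\top K\|_2$, gives
\begin{equation*}
    \tfrac{d}{dt}\|e\| \leq \beta(\prm)\|e\| + \alpha(\prm)\|(I-\redb\redb^\top)\fs\| + \|\cv\|\,\|(I-\Pbb)J_{\prm,\G}(\redb\redb^\top\fs)\redb\|_2,
\end{equation*}
with $\alpha$ and $\beta$ as in the statement. Since $\hrs(t_0,\prm)=\redb^\top\fs^0(\prm)$ makes $e(t_0,\prm)=0$, Gronwall's lemma yields $\|e(t,\prm)\|\leq \int_{t_0}^t e^{\beta(t-s)} g(s)\,ds$; squaring, applying Cauchy--Schwarz in time (which produces the factor $C_\prm(T)$, treating the $\beta=0$ case separately), using $(a+b)^2\leq 2a^2+2b^2$, and finally integrating in $t$ and taking the sup over $\prm$ for the constants delivers the bound on $\|e\|^2_{L^2(\Tcal\times\Sprm)}$. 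Adding the projection error of $\fs$ completes \eqref{eq:error_bound_sol}. The main obstacle is the bookkeeping in the three-way split of the nonlinear error and in verifying that the row--column transposition $\redb^\top J_{\prm,\G}^\top (I-\Pbb^\top)=[(I-\Pbb)J_{\prm,\G}\redb]^\top$ produces precisely the reduced Jacobian residual on the right-hand side; the rest is a standard Gronwall/Cauchy--Schwarz exercise.
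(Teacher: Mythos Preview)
Your proposal is correct and follows essentially the same approach as the paper: the orthogonal splitting $\fs-\redb\hrs=(I-\redb\redb^\top)\fs+\redb e$ is the paper's $e_p+e_h$ (with $e_h=\redb e$, so $\|e_h\|=\|e\|$), the three-term telescoping of the nonlinear difference, the logarithmic-norm bound $(e,K_re)\leq\mu(K)\|e\|^2$, and the Gronwall/Cauchy--Schwarz conclusion are all identical to the paper's argument. The only cosmetic difference is that you work with the $\Nr$-dimensional coefficient error $e=\redb^\top\fs-\hrs$ while the paper carries the lifted $\Nf$-dimensional $e_h=\redb e$, which is immaterial since $\redb$ has orthonormal columns.
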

\begin{proof}
Let us fix the parameter $\prm\in\Sprm$.
The error at each $t\in\Tcal$ can be split as
\begin{equation*}
    e(t,\prm) := \fs(t,\prm)-\redb\hrs(t,\prm)=e_p(t,\prm)+e_h(t,\prm),
\end{equation*}
where $e_p:=\fs-\redb\redb^{\top}\fs$ is the projection error and $e_h:=\redb\redb^{\top}\fs-\redb\hrs$.
Differentiating $e_h$ with respect to time, using \eqref{eq:full_model} and \eqref{eq:Hyperred_model}, and the symplecticity of $\redb$ results in
\begin{equation*}
    \Dot{e}_h(t,\prm)  = \redb\redb^{\top}\Dot{\fs}(t,\prm)-\redb\Dot{\hrs}(t,\prm)
     = \redb\redb^{\top} K(\prm) e_h(t,\prm) + s(t,\prm),
\end{equation*}
where $s(t,\prm) := \redb\redb^{\top}K(\prm)e_p(t,\prm)+\redb J_{\Nr}\big(\redb^{\top}\nabla_\fs \hn(\fs,\prm)-\nabla_\hrs \hn_{hr}(\hrs,\prm)\big)$ and $K(\prm):=J_{\Nf}L(\prm)$.
The norm of $s$ can be bounded as
\begin{equation}\label{eq:proof_2}
    \norm{s(t,\prm)} \leq
    \norm{\redb^\top K(\prm)}_2\norm{e_p(t,\prm)}
    + \norm{\redb^{\top}\nabla_\fs \hn(\fs,\prm)-\nabla_\hrs \hn_{hr}(\hrs,\prm)}.
\end{equation}
The second term in \eqref{eq:proof_2} can be split as follows,
\begin{align}\label{eq:3_terms_ineq}
    \norm{\redb^{\top}\nabla_\fs \hn(\fs,\prm)-\nabla_\hrs \hn_{hr}(\hrs,\prm)}\leq\, &
    \norm{\redb^{\top}\nabla_\fs \hn(\fs,\prm)-\redb^{\top}\nabla_\fs \hn(\redb\redb^{\top}\fs,\prm)}\\ \nonumber
    & + \norm{\redb^{\top}\nabla_\fs \hn(\redb\redb^{\top}\fs,\prm)-\nabla_\hrs \hn_{hr}(\redb^{\top}\fs,\prm)} \\\nonumber
    & + \norm{\nabla_\hrs \hn_{hr}(\redb^{\top}\fs,\prm)-\nabla_\hrs \hn_{hr}(\hrs,\prm)}.
\end{align}
We consider the three terms on the right-hand side separately.
First observe that, if $\nabla_y\Ham(\cdot,\prm)$ is Lipschitz continuous in the $2$-norm with constant $\Lcal_{\Ham}(\prm)$, then $\nabla\hn(\cdot,\prm)$ is also Lipschitz continuous with constant $\Lcal_{\hn}(\prm)\leq \Lcal_{\Ham}(\prm) + \norm{L(\prm)}_2$. This gives
$$\norm{\redb^{\top}\nabla_\fs \hn(\fs,\prm)-\redb^{\top}\nabla_\fs \hn(\redb\redb^{\top}\fs,\prm)}\leq L_{\hn}(\prm)\norm{e_p(t,\prm)}.$$
Moreover, using the decomposition of the Hamiltonian in \eqref{eq:decomp} results in
\begin{align*}
\begin{split}
        \norm{\redb^{\top}\nabla_\fs \hn(\redb\redb^{\top}\fs,\prm)-\nabla_\hrs \hn_{hr}(\redb^{\top}\fs,\prm)}
        & = \norm{\redb^{\top}J_{\prm,\G}^{\top}(\redb\redb^{\top}\fs)\cv-\redb^{\top}J_{\prm,\G}^{\top}(\redb\redb^{\top}\fs)\Pbb^{\top}\cv} \\
        & \leq\norm{\cv}\norm{(I-\Pbb)J_{\prm,\G}(\redb\redb^{\top}\fs)\redb}_2,
\end{split}
\end{align*}
and similarly
\begin{equation}\label{eq:proof_3}
\begin{aligned}
        \norm{\nabla_\hrs \hn_{hr}(\redb^{\top}\fs,\prm)-\nabla_\hrs \hn_{hr}(\hrs,\prm)}
        & =\norm{\redb^{\top}J_{\prm,\G}^{\top}(\redb\redb^{\top}\fs)\Pbb^{\top}\cv - \redb^{\top}J_{\prm,\G}^{\top}(\redb\hrs)\Pbb^{\top}\cv}\\
        &\leq L_\G(\prm)\norm{\redb\redb^\top\fs-\redb\hrs}\norm{\Pbb^\top \cv}\leq\gamma(\prm)\norm{e_h(t,\prm)},
    \end{aligned}
\end{equation}
    where $\gamma(\prm):=L_\G(\prm)\norm{(P^\top \deimb)^{-1}}_2\norm{\cv}$.
We can now substitute these three bounds into \eqref{eq:3_terms_ineq} and \eqref{eq:proof_2} to obtain
\begin{equation}\label{eq:alpha_beta_gamma_bound}
    \norm{s(t,\prm)} \leq \alpha(\prm)\norm{e_p(t,\prm)} + \gamma(\prm)\norm{e_h(t,\prm)} + \norm{\cv}\norm{W(t,\prm)}_2,
\end{equation}
where $\alpha(\prm) := \norm{\redb^\top K(\prm)}_2 + L_{\hn}(\prm)$, and
$W(t,\prm) := (I-\Pbb)J_{\prm,\G}(\redb\redb^{\top}\fs(t,\prm))\redb$.

The time derivative of the error norm gives
\begin{equation}\label{eq:bound_ddt_theta_tilde}
    \frac{d}{dt}\norm{e_h(t,\prm)}=\frac{1}{\norm{e_h(t,\prm)}}\bigg((\redb\redb^{\top} K(\prm)e_h(t,\prm),e_h(t,\prm))+(s(t,\prm),e_h(t,\prm))\bigg).
\end{equation}
The first term on the right-hand side of \eqref{eq:bound_ddt_theta_tilde} can be bounded as
in \cite[Theorem 3.1]{CS12}, by
$\mu(K(\prm))\norm{e_h(t,\prm)}$,
using the fact that $(\redb\redb^{\top} K(\prm)e_h,e_h)=
(\redb\redb^{\top} K(\prm)\redb\widetilde{e}_h,\redb\widetilde{e}_h)
\leq \mu(\redb^\top K\redb)\norm{\widetilde{e}_h}^2$,
with $\widetilde{e}_h:=\redb^\top\fs-\hrs$.

Using
\eqref{eq:alpha_beta_gamma_bound} for the second term of
%
\eqref{eq:bound_ddt_theta_tilde} yields
\begin{equation*}
    \frac{d}{dt}\norm{e_h(t,\prm)}
    \leq \beta(\prm)\norm{e_h(t,\prm)} + b(t,\prm),
\end{equation*}
where $\beta(\prm):=\mu(K(\prm))+\gamma(\prm)$, and $b(t,\prm) := \alpha(\prm)\norm{e_p(t,\prm)}+\norm{\cv}\norm{W(t,\prm)}_2$.

A standard application of the Gronwall inequality \cite{Gro19} yields the conclusion.
\end{proof}
The dependence of the constants in the bound \eqref{eq:error_bound_sol} on the norm of the vector $\cv$ can be avoided by a suitable normalization of the decomposition \eqref{eq:decomp}.
Observe that the first term on the right-hand side of \eqref{eq:error_bound_sol} is controlled via the reduced space, while the second term can be minimized in the construction of the DEIM basis.
Indeed, the error of the approximation of the reduced Jacobian given by the DEIM projection \eqref{eq:DEIMproj} can be bounded analogously to \cite[Lemma 3.2]{CS10} as
    \begin{equation*}
        \norm{(I-\mathbb{P})J_{\prm_j,\G}(\redb\redb^{\top}\fs^{s_i}(\prm_j))\redb}_2
        \leq\norm{(P^{\top}\deimb)^{-1}}_2\sqrt{\sum_{\ell=m+1}^{\rank(M_J)}\sigma_\ell^2},
    \end{equation*}
for any $i=1,\ldots,\ns$, and $j=1,\ldots,\np$,
where $\{\sigma_\ell\}_{\ell}$ are the singular values of $M_{J}$.

For Jacobian matrices having a particular structure, the Lipschitz continuity assumption of $J_{\prm,\G}$ in \Cref{error} might not be needed.
An example often encountered in applications is when the $i$th entry of $\G(\fs,\prm)$ only depends on the $i$th pair of symplectic variables, namely on the $i$th and $(i+\nf)$th components of $\fs$, for any $\fs\in\Vd{\Nf}$.
In this case, $\nh=\nf$ and $J_{\prm,\G}\in\Rbb^{\nf\times\Nf}$ is formed of two $\nf\times\nf$ diagonal blocks.
This implies that $\nabla_\hrs \hn_{hr}=J_{\prm,\G}^\top\Pbb^\top \cv$ is of the form $D\nabla \hn$ where
\begin{equation*}
    D:=\diag((\Pbb^\top \cv)_1,\dots,(\Pbb^\top \cv)_\nf,(\Pbb^\top \cv)_1,\dots,(\Pbb^\top \cv)_\nf)\in\Rbb^{\Nf\times\Nf}.
\end{equation*}
Therefore, the bound in \eqref{eq:proof_3} becomes
\begin{align*}
\begin{split}
    \norm{\nabla_\hrs \hn_{hr}(\redb^\top\fs,\prm)-\nabla_\hrs \hn_{hr}(\hrs,\prm)}
    & =\norm{A^\top D\big(\nabla_\fs \hn(\redb\redb^\top\fs,\prm)-\nabla_\fs \hn(\redb\hrs,\prm)\big)}\\
    &\leq\norm{D}_2 \Lcal_{\hn}(\prm) \norm{\redb\redb^\top\fs-\redb\hrs},
\end{split}
\end{align*}
where 
$\norm{D}_2
\leq\norm{\Pbb^\top \cv}_2
    \leq\norm{(P^\top\deimb)^{-1}}_2\norm{\cv}$.
Hence, \eqref{eq:proof_3} holds with the constant $\gamma(\prm):=L_{\hn}(\prm)\norm{(P^\top \deimb)^{-1}}_2\norm{\cv}$ and no Lipschitz continuity of $J_{\prm,\G}$ is required.

\subsection{Conservation of the Hamiltonian}\label{sec:ham_cons}
In this section we assess the error in the conservation of the Hamiltonian introduced by the hyper-reduction. In the following result we show that the error between the Hamiltonian evaluated at the full model solution and at the hyper-reduced solution is bounded by the approximation error of $\hn$. This implies that the hyper-reduced model guarantees preservation of the Hamiltonian for sufficiently large DEIM bases, that is, when the accuracy of the hyper-reduced model reaches the accuracy of the reduced model and up to errors due to the time integration scheme. We refer to \Cref{sec:numexp} for several numerical tests that corroborate this result.

\begin{proposition}\label{prop:ham_cons}
Let $\prm\in\Sprm$ be fixed. Let $\fs^j(\prm)$ be an approximation of the solution $\fs(t^j,\prm)$ of the full order system \eqref{eq:full_model} at time $t^j$, with $j=1,\dots,\nt$, obtained with a user-defined numerical time integrator. Similarly, let $\hrs^j(\prm)$ be an approximation of the solution $\hrs(t^j,\prm)$ of the hyper-reduced system \eqref{eq:Hyperred_model} at time $t^j$.
Assume that $\fs^0(\prm)\in\col(\redb)$ and $\G(\redb\redb^{\top}\fs^0(\prm),\prm)\in\col(\deimb)$, where $\col$ denotes the column space. Then, the error $\Delta \Ham_j(\prm) := \seminorm{\Ham(\fs^j(\prm),\prm)-\Ham(\redb\hrs^j(\prm),\prm)}$
satisfies
\begin{equation*}
    \Delta \Ham_j(\prm)\leq\seminorm{\cv^{\top}(I-\Pbb) \G(\redb\hrs^j(\prm),\prm)} + \varepsilon_{\Ham}^{[t^0,t^j]} + \varepsilon_{\Ham_{hr}}^{[t^0,t^j]},
\end{equation*}
where $\varepsilon_\Ham^{[t^0,t^j]}:=\seminorm{\Ham(\fs^{j})-\Ham(\fs^0)}$ and $\varepsilon_{\Ham_{hr}}^{[t^0,t^j]}:=\seminorm{\Ham_{hr}(\hrs^{j})-\Ham_{hr}(\hrs^{0})}$ are the errors in the Hamiltonian conservation,
in the temporal interval $[t^0,t^j]$, associated with the chosen temporal integrator.
\end{proposition}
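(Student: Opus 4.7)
The plan is to decompose the quantity $\Ham(\fs^j,\prm)-\Ham(\redb\hrs^j,\prm)$ by inserting the initial values as an intermediate step, and then exploit the two standing assumptions to collapse most of the terms. Concretely, I would write
\begin{equation*}
\Ham(\fs^j)-\Ham(\redb\hrs^j)
= \bigl[\Ham(\fs^j)-\Ham(\fs^0)\bigr]
+ \bigl[\Ham(\fs^0)-\Ham(\redb\hrs^0)\bigr]
+ \bigl[\Ham(\redb\hrs^0)-\Ham(\redb\hrs^j)\bigr],
\end{equation*}
suppressing the parameter $\prm$ for readability. The first bracket is bounded in absolute value by $\varepsilon_{\Ham}^{[t^0,t^j]}$ by definition. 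The second bracket vanishes: the hypothesis $\fs^0\in\col(\redb)$ together with the orthogonality $\redb^\top\redb=I_{\Nr}$ yields $\redb\hrs^0=\redb\redb^\top\fs^0=\fs^0$, so the two Hamiltonian values coincide.

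Next, I would rewrite the third bracket using the identity $\Ham(\redb\hrs)=\Ham_r(\hrs)$ and the relation
\begin{equation*}
\Ham_r(\hrs)-\Ham_{hr}(\hrs)=\hn_r(\hrs)-\hn_{hr}(\hrs)=\cv^\top(I-\Pbb)\G(\redb\hrs),
\end{equation*}
which follows directly from the decomposition \eqref{eq:decomp} and the definition \eqref{eq:hhr_and_gradhhr}. Applying this at both $\hrs^j$ and $\hrs^0$ gives
\begin{equation*}
\Ham(\redb\hrs^0)-\Ham(\redb\hrs^j)
= \bigl[\Ham_{hr}(\hrs^0)-\Ham_{hr}(\hrs^j)\bigr]
+ \cv^\top(I-\Pbb)\G(\redb\hrs^0)
- \cv^\top(I-\Pbb)\G(\redb\hrs^j).
\end{equation*}
Here the second assumption enters: since $\redb\hrs^0=\fs^0=\redb\redb^\top\fs^0$, the argument of $\G$ at the initial time is $\redb\redb^\top\fs^0$, whose image under $\G$ is assumed to lie in $\col(\deimb)$. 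Because $\Pbb=\deimb(P^\top\deimb)^{-1}P^\top$ acts as the identity on $\col(\deimb)$, the term $\cv^\top(I-\Pbb)\G(\redb\hrs^0)$ is zero. The remaining Hamiltonian difference of the hyper-reduced flow is controlled by $\varepsilon_{\Ham_{hr}}^{[t^0,t^j]}$.

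Combining the three bounds via the triangle inequality yields the stated estimate. There is no real obstacle here: the proof is essentially a bookkeeping exercise, and the only subtle point is recognising that both hypotheses ($\fs^0\in\col(\redb)$ and $\G(\redb\redb^\top\fs^0)\in\col(\deimb)$) are precisely what is needed to kill the two boundary-style contributions so that the residual collapses to a single DEIM-projection term evaluated at time $t^j$ plus the two time-integrator conservation errors.
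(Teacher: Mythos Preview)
Your proof is correct and follows essentially the same approach as the paper: both arguments decompose $\Ham(\fs^j)-\Ham(\redb\hrs^j)$ through the initial values, use $\fs^0\in\col(\redb)$ to kill the term $\Ham(\fs^0)-\Ham(\redb\hrs^0)$, and use $\G(\redb\redb^\top\fs^0)\in\col(\deimb)$ together with the identity $\Ham(\redb\hrs)-\Ham_{hr}(\hrs)=\cv^\top(I-\Pbb)\G(\redb\hrs)$ to eliminate the DEIM residual at the initial time. The only cosmetic difference is that the paper writes a five-term triangle inequality up front, whereas you do a three-term split and then further expand the last bracket; the substance is identical.
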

\begin{proof}
In this proof we omit the dependence of $\fs^j$, $\hrs^j$ and $\Ham$ on $\prm$. The Hamiltonian error at the generic time instant $t^j$ can be bounded as 
\begin{equation}\label{eq:ham_cons_bound}
\begin{aligned}
    \Delta \Ham_j(\prm)\leq &\, 
    \seminorm{\Ham(\fs^j)-\Ham(\fs^0)}
    +\seminorm{\Ham_{hr}(\hrs^j)-\Ham_{hr}(\hrs^0)}\\
    & +\seminorm{\Ham(\redb\hrs^0)-\Ham_{hr}(\hrs^0)}
    + \seminorm{\Ham(\redb\hrs^j)-\Ham_{hr}(\hrs^j)} + \Delta \Ham_0(\prm).
\end{aligned}
\end{equation}
The first two terms in \eqref{eq:ham_cons_bound}
measure the Hamiltonian conservation with respect to the initial value in the full and hyper-reduced systems. These quantities only depend on the time integration scheme.
The term $\seminorm{\Ham(\redb\hrs^0)-\Ham_{hr}(\hrs^0)}=\seminorm{\cv^{\top}(I-\Pbb)\G(\redb\hrs^0)}$ vanishes since $\G(\redb\hrs^0)\in\col(\deimb)$ by assumption.
Similarly, the term $\Delta \Ham_0(\prm)$ vanishes due to the injectivity of the linear map $\redb$ and the assumption $\fs^0\in\col(\redb)$.

The conclusion follows by the definition of the full and reduced Hamiltonian.
\end{proof}

The bound on the Hamiltonian error hinges on the assumptions that $\fs^0(\prm)\in\col(\redb)$ and $\G(\redb\redb^\top \fs^0,\prm)\in\col(\deimb)$ for any value of the parameter $\prm\in\Sprm$. These conditions can be enforced via a shifting of the state variable and of the operator $\G$.
Let us introduce the variable $\fs_s(t,\prm):=\fs(t,\prm)-\fs^0(\prm)$ and the shifted operator $\G_s(\fs(t,\prm),\prm):=\G(\fs(t,\prm),\prm)-\G(\fs^0(\prm),\prm)$, for any $t\in\Tcal$ and $\prm\in\Sprm$.
Substituting into the expression of the Hamiltonian \eqref{eq:Ham} gives
\begin{equation}\label{eq:shiftH}
    \Ham_s(\fs_s,\prm):= \Ham(\fs_s+\fs^0,\prm) = \dfrac12 \fs_s^\top L(\prm)\fs_s + \fs_s^\top f_s(\prm) + \Gcal_s(\prm)+ \cv^{\top}\G_s(\fs_s+\fs^0,\prm),
\end{equation}
where
$f_s(\prm):=f(\prm) + L(\prm)\fs^0$ and
$\Gcal_s(\prm)=\Gcal(\prm)+1/2(\fs^0)^\top L(\prm)\fs^0+(\fs^0)^\top f(\prm)+ \cv^\top \G(\fs^0,\prm)$.
Then, the shifted state $\fs_s$ satisfies an Hamiltonian system with Hamiltonian $\Ham_s$ as in \eqref{eq:shiftH} and initial condition $\fs_s(t_0,\prm) = 0$.
Since $\fs_s^0(\prm)=0$
and $\G_s(\redb\redb^\top\fs_s^0+\fs^0,\prm)= \G_s(\fs^0,\prm)=\G(\fs^0,\prm)-\G(\fs^0,\prm)=0$
for any $\prm\in\Sprm$,
the shifted system satisfies the hypotheses of \Cref{prop:ham_cons}.
The shifted system is the one we consider as full order model in all numerical tests of \Cref{sec:numexp}.

\section{Adaptive gradient-preserving hyper-reduction}\label{sec:adaptive_DEIM}
In many cases of interest, such as convection-dominated phenomena and conservative dynamics, it is known that the solution space, under variation of time and parameter, has poor global reducibility properties. An analogous property is observed when considering the space of Hamiltonian gradients, although a rigorous connection between the reducibility of the two spaces is not know. We refer to \Cref{sec:NLS} for numerical evidence of these considerations. 

To address this lack of global reducibility,
%
we derive an adaptive strategy for the hyper-reduction of \eqref{eq:full_model} where the DEIM projection is updated in time while preserving the gradient structure of the Hamiltonian vector field.
The proposed approach is inspired by the method introduced in \cite{Peher15,Peher20}.

For the sake of exposition, we assume that basis and interpolation points are adapted every $\delta$ time instants starting from $t^{\delta_0}$, where $\delta,\delta_0>1$ are fixed hyper-parameters.
Assuming that $\nt-\delta_0$ is a multiple of $\delta$, the number of adaptations is $\nad=\delta^{-1}(\nt-\delta_0)$ and the adaptations are performed at time instants $\{t^{\ell_j}\}_{j=1}^{\nad}$, where $\ell_j:=\delta_0+(j-1)\delta$. We define $\ell_0:=0$ and note that $t^{\ell_{\nad+1}}=t^{\nt}=T$.
In principle, $\nad$ does not need to be fixed \emph{a priori} and it may be determined during time evolution of the hyper-reduced system based on e.g. suitable error criteria. In the proposed adaptive hyper-reduction algorithm, the basis update is performed for each instance of the test parameter. Let us then fix the problem parameter $\prm\in\Sprm$.

Between two updates the local hyper-reduced system to be solved in the temporal interval $[t^{\ell_j},t^{\ell_{j+1}}]$ reads
\begin{equation}\label{eq:lochr}
\left\{\begin{aligned}
    & \Dot{\hrs}(t,\prm)=J_{\Nr}\nabla_\hrs \Ham_{hr}^j(\hrs(t,\prm),\prm), & t\in(t^{\ell_j},t^{\ell_{j+1}}],\\
    & \hrs(t^{\ell_j},\prm) = \hrs^{\ell_j}(\prm),
\end{aligned}\right.
\end{equation}
for $j=0,\dots,\nad$, where the local hyper-reduced Hamiltonian is given by
\begin{equation*}
    \Ham^j_{hr}(\hrs,\prm):=\frac{1}{2}\hrs^\top L_r(\prm)\hrs + + \hrs^\top f_r(\prm) + \Gcal(\prm) + \cv^\top\Pbb_j\G(\redb\hrs,\prm),
\end{equation*}
and $\Pbb_j:=\deimb_j(P^\top_j\deimb_j)^{-1}P_j^\top$ is the local DEIM projection.

Starting from a DEIM basis $\deimb_0\in\Rbb^{\nh\times\nd}$ and interpolation matrix $P_0\in\Rbb^{\nh\times\nd}$, the idea of the adaptive hyper-reduction is to perform a rank-$r$ update of the DEIM basis to adapt the DEIM pair $(\deimb_j,P_j)$ to $(\deimb_{j+1},P_{j+1})$ for $j=0,\dots,\nad-1$.
The initial DEIM basis $\deimb_0$ is constructed in the offline phase from snapshots of the full model solutions in the first $\delta_0$ time steps, namely
\begin{equation*}
    M_{J}=[J_{\prm,\G}(\redb\redb^\top \fs^1(\prm))\redb\,\ldots\,J_{\prm,\G}(\redb\redb^\top\fs^{\delta_0}(\prm))\redb]\in\Rbb^{\nh\times\Nr\delta_0}.
\end{equation*}
Observe that one might also construct the initial DEIM pair $(\deimb_0,P_0)$ from a set of training parameters.

Next, at time instant $t^{\ell_{j+1}}$, with $j=0,\dots,\nad-1$, the DEIM pair $(\deimb_j,P_j)$ is updated based on the snapshots of the nonlinear Hamiltonian in a past temporal window of size $w\in\mathbb{N}$, with $w<\delta_0$.
More in details, consider the snapshot matrix of the reduced Jacobian in the temporal window $[t^{\ell_{j+1}-w+1},t^{\ell_{j+1}}]$, namely let
\begin{equation*}
    F_j:=[J_{\prm,\G}(\redb\hrs^{\ell_{j+1}-w+1})\redb\,\ldots\,J_{\prm,\G}(\redb\hrs^{\ell_{j+1}})\redb]\in\Rbb^{\nh\times\overline{w}},
\end{equation*}
where $\overline{w}:=\Nr w$.
The residual of the DEIM approximation of the columns of $F_j$ is given by
$R_j=\deimb_jC_j-F_j\in\Rbb^{\nh\times\overline{w}}$,
where $C_j\in\Rbb^{\nd\times\overline{w}}$ is the DEIM coefficient matrix
$C_j:=(P_j^{\top}\deimb_j)^{-1}P_j^{\top}F_j$.
Analogously to \cite{Peher15}, we update the DEIM basis matrix $\deimb_j$ to $\deimb_{j+1}$ via a rank-$r$ correction, that is
\begin{equation}\label{eq:Uupdate}
    \deimb_{j+1}=\deimb_j+\bm{\alpha}_j\bbf_j^{\top}
\end{equation}
with $\bm{\alpha}_j\in\Rbb^{\nh\times r}$ and $\bbf_j\in\Rbb^{\nd\times r}$ of rank $r\in\mathbb{N}$, $r\leq\nd$.
The update $\albf_j\bbf_j^{\top}$ is defined as the rank-$r$ matrix that minimizes the Frobenius norm of the residual at the sampling points collected in the matrix
$S_j=[\mathbf{e}_{s_1^{(j)}}\,\ldots\,\mathbf{e}_{s_{\nd_s}^{(j)}}]\in\Rbb^{\nh\times \nd_s}$
where $s_1^{(j)},\dots,s_{\nd_s}^{(j)}\in\{1,\dots,\nh\}$, namely it minimizes
\begin{equation*}
    \norm{S_j^\top(\deimb_{j+1}C_j-F_j)}_F^2
    =\norm{S_j^{\top}R_j+S_j^{\top}\albf_j\bbf_j^{\top}C_j}_F^2.
\end{equation*}
With the change of variable
$\abf_j:=S_j^\top\albf_j\in\Rbb^{\nd_s\times r}$,
the update \eqref{eq:Uupdate} boils down to solving the minimization problem
\begin{equation}\label{eq:min_prob}
    (\abf_j,\bbf_j)=\argmin_{(\abf,\bbf)\in\Vcal_r(\nd_s,\nd)}\norm{S_j^{\top}R_j+\abf\bbf^{\top}C_j}_F^2,
\end{equation}
over the space of rank-$r$ matrices defined as
\begin{equation}\label{eq:Vrspace}
\Vcal_r(\nd_s,\nd):=\{(\abf,\bbf)\in\Rbb^{\nd_s\times r}\times\Rbb^{\nd\times r}:\;\rank(\abf)=\rank(\bbf) = r\}.
\end{equation}
The number $\nd_s$ of sampling points is taken such that $\nd_s\geq\nd$ and $\nd_s\ll\nh$. The first requirement ensures that the minimization problem \eqref{eq:min_prob} has a non-trivial solution, while the second condition is enforced to avoid working in the potentially high dimension $\nh$.
Notice that, since only $\nd_s$ of the $\nh$ rows of $S_j\abf_j\in\Rbb^{\nh\times r}$ are non-zero, the update \eqref{eq:Uupdate} only modifies $\nd_s$ rows of the DEIM basis $\deimb_j$, that is
\begin{equation}\label{eq:Scheck}
    \Check{S}_j^{\top}\deimb_{j+1}=\Check{S}_j^{\top}\deimb_j\quad\mbox{where}\quad \Check{S}_j:=[\mathbf{e}_{s_{\nd_s+1}^{(j)}}\ldots\,\mathbf{e}_{s_\nh^{(j)}}]\in\Rbb^{\nh\times (\nh-\nd_s)},
\end{equation}
and $\{s_{\nd_s+1}^{(j)},\dots,s_{\nh}^{(j)}\}:=\{1,\ldots,\nh\}\setminus \{s_1^{(j)},\dots,s_{\nd_s}^{(j)}\}$.

\subsection{Rank-\texorpdfstring{$r$}{r} update of the DEIM basis}\label{sec:DEIM_basis_update}
The minimization problem \eqref{eq:min_prob} has been already considered in \cite[Lemma 3.5]{Peher15} for rank-$1$ updates of the DEIM basis. We extend the result to the general case of rank $r>1$.
\begin{theorem}\label{theo:minprob_eig_equivalence}
Let $r\leq\nd$ and assume that $C_j\in\mathbb{R}^{\nd\times\overline{w}}$ has full row-rank $\nd$. Let $\Vcal_r(\nd_s,\nd)$ be defined as in \eqref{eq:Vrspace}.
Then, the solution of the minimization problem
\begin{equation}\label{eq:objective}
    \min_{(\abf,\bbf)\in\Vcal_r(\nd_s,\nd)}\norm{S_j^{\top}R_j+\abf\bbf^{\top}C_j}_F^2,
\end{equation}
is given by $\abf=[a_1 \, \ldots \, a_r]\in\Rbb^{\nd_s\times r}$ and $\bbf=[b_1\,\ldots\, b_r]\in\Rbb^{\nd\times r}$ where $b_1,\dots,b_r$ are the eigenvectors of the generalized eigenvalue problem
\begin{equation}\label{eq:gen_eval}
    C_j(S_j^{\top}R_j)^{\top}(S_j^{\top}R_j)C_j^{\top}v=\lambda C_jC_j^{\top}v,
\end{equation}
corresponding to the $r$ largest eigenvalues $\lambda_1\geq\dots\geq\lambda_r\geq 0$, and 
\begin{equation}\label{eq:ai_from_bi}
    a_i=-\frac{1}{\norm{C_j^{\top}b_i}^2}S_j^{\top}R_jC_j^{\top}b_i \qquad i=1,\dots,r.
\end{equation}
Moreover, the solution $(\abf,\bbf)\in\Vcal_r(\nd_s,\nd)$ of \eqref{eq:objective} satisfies
\begin{equation}\label{eq:update_error}
    \norm{S_j^{\top}R_j+\abf\bbf^{\top}C_j}_F^2=\norm{S_j^{\top}R_j}_F^2-\sum_{i=1}^r\lambda_i.
\end{equation}
\end{theorem}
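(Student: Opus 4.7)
The plan is to reduce the bivariate minimization in $(\mathbf{a},\mathbf{b})$ to a generalized Rayleigh trace problem in $\mathbf{b}$ alone, and then invoke a Ky Fan type result to identify the optimum with the top eigenvectors of \eqref{eq:gen_eval}. This generalizes the rank-$1$ argument of \cite[Lemma 3.5]{Peher15}, where the inner minimization over $\mathbf{a}$ is trivial and the outer problem reduces to a standard Rayleigh quotient.

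To carry this out, I would first set $E := S_j^\top R_j$, $D := C_j^\top$ and $B := D\mathbf{b}$, so that the objective becomes $\|E + \mathbf{a}B^\top\|_F^2$. Since $C_j$ has full row rank $\nd$ by hypothesis, $D$ has full column rank, and for any rank-$r$ $\mathbf{b}$ the matrix $B$ has rank $r$, which makes $B^\top B$ invertible. Freezing $\mathbf{b}$ and expanding the squared Frobenius norm as a trace, the first-order optimality condition in $\mathbf{a}$ gives the (unconstrained) minimizer $\mathbf{a}^\star(\mathbf{b}) = -EB(B^\top B)^{-1}$. Substituting back and simplifying yields
\begin{equation*}
    \min_{\mathbf{a}} \|E + \mathbf{a}B^\top\|_F^2 = \|E\|_F^2 - \mathrm{tr}\big((B^\top B)^{-1} B^\top E^\top E B\big).
\end{equation*}
Introducing the symmetric matrices $M := C_j E^\top E C_j^\top$ and $N := C_j C_j^\top$ (both in $\Rbb^{\nd\times\nd}$, with $N$ positive definite), the outer problem becomes
\begin{equation*}
    \max_{\mathbf{b}\in\Rbb^{\nd\times r},\ \mathrm{rank}(\mathbf{b})=r} \mathrm{tr}\big((\mathbf{b}^\top N \mathbf{b})^{-1} \mathbf{b}^\top M \mathbf{b}\big).
\end{equation*}

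The next step is to recognize this as the trace form of a generalized Rayleigh quotient for the pencil $(M,N)$. After the invertible change of variables $\widetilde{\mathbf{b}} = N^{1/2}\mathbf{b}$ this is equivalent to the standard problem $\max\{\mathrm{tr}((\widetilde{\mathbf{b}}^\top\widetilde{\mathbf{b}})^{-1}\widetilde{\mathbf{b}}^\top N^{-1/2} M N^{-1/2}\widetilde{\mathbf{b}})\}$, whose maximum is the sum of the $r$ largest eigenvalues of $N^{-1/2} M N^{-1/2}$, equivalently the $r$ largest generalized eigenvalues of \eqref{eq:gen_eval}, by Ky Fan's trace maximum principle. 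The maximizer is any $\widetilde{\mathbf{b}}$ whose columns span the top $r$-dimensional invariant subspace, so in the original variables an optimal $\mathbf{b}$ is the matrix $[b_1,\ldots,b_r]$ of top generalized eigenvectors, which can be chosen $N$-orthonormal up to scaling.

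Finally, to recover the formula \eqref{eq:ai_from_bi} for $\mathbf{a}$, I would normalize the eigenvectors so that $b_i^\top N b_j = \|C_j^\top b_i\|^2 \delta_{ij}$, which makes $B^\top B$ diagonal with entries $\|C_j^\top b_i\|^2$; then the $i$-th column of $\mathbf{a}^\star = -EB(B^\top B)^{-1}$ is exactly $-\|C_j^\top b_i\|^{-2} S_j^\top R_j C_j^\top b_i$. The value of the minimum \eqref{eq:update_error} then follows by substituting the attained maximum $\sum_{i=1}^r \lambda_i$ into the reduced objective. The main obstacle I anticipate is the rigorous handling of the rank-$r$ constraint: one must check that the unconstrained optimum $\mathbf{a}^\star(\mathbf{b})$ has full column rank $r$ when $\mathbf{b}$ is chosen as above (which follows from $E C_j^\top b_i$ being linearly independent for generic data, and can be justified either directly from the spectral structure of $M$ or by a perturbation/closure argument ensuring the minimum on $\Vcal_r(\nd_s,\nd)$ coincides with the unconstrained minimum).
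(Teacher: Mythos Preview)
Your proof is correct and takes a genuinely different route from the paper's. The paper first uses a QR factorization of $\mathbf{a}$ to restrict, without loss of generality, to the set $\widehat{\Vcal}_r$ where the columns of $\mathbf{a}$ are mutually orthogonal; on this set the objective decouples as $\|S_j^\top R_j\|_F^2 + \sum_{i=1}^r\big(\|a_i\|^2\|C_j^\top b_i\|^2 + 2\,\mathrm{tr}((S_j^\top R_j)^\top a_i b_i^\top C_j)\big)$, so setting the gradient to zero yields $r$ independent rank-$1$ problems, each handled by \cite[Lemma~3.5]{Peher15}. The global minimum is then identified by evaluating the objective at these stationary points and picking the $r$ eigenvectors with largest eigenvalues. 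By contrast, you keep the problem coupled: you first minimize over $\mathbf{a}$ in closed form for fixed $\mathbf{b}$, reduce the outer problem to the trace ratio $\max_{\mathbf{b}}\mathrm{tr}\big((\mathbf{b}^\top N\mathbf{b})^{-1}\mathbf{b}^\top M\mathbf{b}\big)$, and invoke Ky~Fan after the whitening $\widetilde{\mathbf{b}}=N^{1/2}\mathbf{b}$. Your approach is arguably cleaner and makes the variational structure transparent, while the paper's argument is more elementary in that it avoids an explicit appeal to Ky~Fan and connects directly to the existing rank-$1$ result. The rank-$r$ constraint issue you flag (full column rank of $\mathbf{a}^\star$) is present in both arguments and is glossed over in the paper; in either case it is satisfied whenever $\lambda_r>0$, since then the $a_i$ are nonzero and, by $N$-orthogonality of the $b_i$ together with \eqref{eq:gen_eval}, mutually orthogonal.
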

\begin{proof}
Let $\Ocal_j$ denote the objective function of the minimization problem \eqref{eq:objective}, namely $\Ocal_j(\abf,\bbf):=\norm{S_j^\top R_j+\abf\bbf^\top C_j}_F^2$.
We recast problem \eqref{eq:objective} in the space
$$\widehat{\Vcal}_r(\nd_s,\nd):=\{(\abf,\bbf)\in\Rbb^{\nd_s\times r}\times\Rbb^{\nd\times r}:\;\rank(\abf)=\rank(\bbf) = r,\,\abf^\top\abf\;\mbox{diagonal}\}.$$
Assume that the pair $(\cbf,\dbf)\in\Vcal_r(\nd_s,\nd)$ minimizes \eqref{eq:objective}.
Let $\cbf=\fbf Z$ be the thin QR factorization of $\cbf$ without normalization, where $\fbf\in\Rbb^{\nd_s\times r}$ and $Z\in\Rbb^{r\times r}$ is upper triangular with ones on the main diagonal. Define $\gbf:=\dbf Z^{\top}\in\Rbb^{\nd\times r}$.
Then $(\fbf,\gbf)\in\widehat{\Vcal}_r(\nd_s,\nd)$ and $\fbf\gbf^\top C_j=\cbf\dbf^\top C_j$, which implies
\begin{equation*}
    \min_{(\abf,\bbf)\in\Vcal_r(\nd_s,\nd)}\norm{S_j^\top R_j+\abf\bbf^\top C_j}_F^2=\min_{(\abf,\bbf)\in\widehat{\Vcal}_r(\nd_s,\nd)}\norm{S_j^\top R_j+\abf\bbf^\top C_j}_F^2,
\end{equation*}
and we can restrict our search to the case where the vectors $a_i$ are orthogonal.

In $\widehat{\Vcal}_r$, the objective function can be expanded as
\begin{equation}\label{eq:objective_expanded}
    \Ocal_j(\abf,\bbf) = \norm{S_j^\top R_j}_F^2+\sum_{i=1}^r\norm{a_i}^2 \norm{C_j^\top b_i}^2+2\sum_{i=1}^r\text{tr}\left((S_j^\top R_j)^\top a_ib_i^\top C_j\right).
\end{equation}
Imposing the gradient of \eqref{eq:objective_expanded} to vanish leads to decoupled equations for each pair $(a_i,b_i)$.
In particular, for each $i=1,\ldots,r$, the resulting problem boils down to the rank-$1$ minimization of \cite[Lemma 3.5]{Peher15}. For a fixed index $i$, $a_i$ is given by \eqref{eq:ai_from_bi},
and $b_i$ is eigenvector of the generalized eigenvalue problem
\begin{equation*}
    C_j(S_j^\top R_j)^\top(S_j^\top R_j)C_j^\top v=\lambda C_jC_j^\top v.
\end{equation*}
The corresponding eigenvalue $\lambda_i\in\mathbb{R}$ can be written as
\begin{equation*}
    \lambda_i=\frac{\norm{S_j^\top R_jC_j^\top b_i}^2}{\norm{C_j^\top b_i}^2}.
\end{equation*}
So far we know that if $b_1,\dots,b_r$ are \emph{any} $r$ eigenvectors of \eqref{eq:gen_eval} and $a_1,\dots,a_r$ are obtained from $b_i$ using \eqref{eq:ai_from_bi}, then $(\abf,\bbf)\in\widehat{\Vcal}_r$ is a stationary point of \eqref{eq:objective}.
We now want to identify the global minimum.
Starting from \eqref{eq:objective_expanded} and inserting the expression for $a_i$, the second term on the right-hand side of \eqref{eq:objective_expanded} becomes
\begin{equation*}
    \sum_{i=1}^r\norm{a_i}^2\norm{C_j^{\top}b_i}^2=\sum_{i=1}^r\frac{\norm{S^{\top}RC_j^{\top}b_i}^2}{\norm{C_j^{\top}b_i}^2}=\sum_{i=1}^r\lambda_i,
\end{equation*}
while the third term of \eqref{eq:objective_expanded} is
\begin{equation*}
    \sum_{i=1}^r\text{tr}\left((S^{\top}R)^{\top}a_i b_i^{\top}C_j\right)
    = \sum_{i=1}^r-\frac{1}{\norm{C_j^{\top}b_i}^2}\text{tr}(b_i^{\top}C_j(S^{\top}R)^{\top}(S^{\top}R)C_j^{\top}b_i)
    = -\sum_{i=1}^r\lambda_i.
\end{equation*}
Hence, the objective function \eqref{eq:objective_expanded} in $\widehat{\Vcal}_r$ satisfies \eqref{eq:update_error}.
Since $\bbf$ is required to have column-rank $r$, the global minimum of \eqref{eq:objective} is obtained by choosing as $b_1,\dots,b_r$ the eigenvectors of \eqref{eq:gen_eval} corresponding to the $r$ largest eigenvalues.
\end{proof}

If the matrix $C_j$ does not have full row-rank, that is, $r_C:=\text{rank}(C_j)<m$, one can proceed as suggested in \cite[Lemma 3.4]{Peher15} by performing a rank-revealing QR decomposition of $C_j$, i.e., $C_j=Q_jZ_j$ where $Q_j\in\Rbb^{\nd\times r_C}$ has orthogonal columns and $Z_j\in\Rbb^{r_C\times\overline{w}}$. Introducing $\zbf=Q_j^\top\bbf$, \Cref{theo:minprob_eig_equivalence} gives the matrices $(\abf,\zbf)\in\Vcal_r(\nd_s,r_C)$ that minimize $\norm{S_j^{\top}R_j+\abf\zbf^{\top}Z_j}_F^2$, and $\bbf$ can be recovered as $\bbf=Q_j\zbf$.

The steps to update the DEIM basis are summarized in \Cref{alg:basis_adapt}.
\begin{algorithm}[H]
\caption{Adaptation of the DEIM projection}\label{alg:basis_adapt}
\begin{algorithmic}[1]
{\small
\Procedure{$(\deimb_{j+1},P_{j+1})=$ adaptBasis}{$\{\hrs^{\ell_{j+1}-w+1},\ldots,\hrs^{\ell_{j+1}}\}$, $\redb$, $\deimb_j$, $P_j$, $S_j$, $r$}
\State Compute the matrix $F_j$ at the interpolation and sampling points: $P_j^{\top}F_j$, $S_j^{\top}F_j$.
\State Compute the DEIM coefficients $C_j=(P_j^{\top}\deimb_j)^{-1}(P_j^{\top}F_j)$.
\State Compute the DEIM residual at the sampling points: $S_j^{\top}R_j=S_j^{\top}\deimb_jC_j-S_j^{\top}F_j$.
\State Solve the generalized eigenvalue problem \eqref{eq:gen_eval}.
\State Set $b_1,\dots,b_r$ as the eigenvectors corresponding to the $r$ largest eigenvalues.
\State Compute $a_1,\dots,a_r$ using \eqref{eq:ai_from_bi}.
\State Set $\abf_j=[a_1\,\ldots\,a_r]$ and $\bbf_j=[b_1\,\ldots\,b_r]$.
\State $S_j^{\top}\deimb_{j+1} \gets S_j^{\top}\deimb_j + \abf_j\bbf_j^{\top}$ and $\Check{S}_j^{\top}\deimb_{j+1}\gets \Check{S}_j^{\top}\deimb_j$.
\State Compute the DEIM indices $P_{j+1}$ associated with $\deimb_{j+1}$.
\EndProcedure}
\end{algorithmic}
\end{algorithm}
For each value of the parameter, the complexity of updating the DEIM basis, as described in \Cref{alg:basis_adapt}, is $O(\nd_s\nr w(s_1+s_2))+O(\nd^2\nd_s)+O(\nd_s\nd\nr w)$.
In greater detail, under the assumption that the Jacobian $J_{\prm,\G}$ has at most $s_1+s_2$ non-zero entries per row, the computation of $P_j^\top F_j=P_j^\top J_{\prm,\G}(\redb\hrs^i)\redb$ for all $w$ hyper-reduced solutions $\hrs^i$ in the temporal window has complexity $O((s_1+s_2)\nd\nr w)$. 
Analogously, 
$S_j^\top F_j\in\Rbb^{\nd_s\times\overline{w}}$ requires $O((s_1+s_2)\nd_s\nr w)$ operations.
Next, the matrix-matrix multiplications involved in the computation of $C_j\in\Rbb^{\nd\times\overline{w}}$ (line 3)
and of $S_j^{\top}R_j\in\Rbb^{\nd_s\times\overline{w}}$ (line 4)
require $O(\nd^3)+O(\nd^2\nr w)+O(\nd_s\nd\nr w)$ operations.
%
The solution of the generalized eigenvalue problem \eqref{eq:gen_eval} has arithmetic complexity $O(\nd_s\nd\nr w)+O(\nd^2\nd_s)+O(\nd^3)$.
Finally, the total complexity of computing the matrix $\abf\in\Rbb^{\nd_s\times r}$ via \eqref{eq:ai_from_bi} is $O(r\nd \nd_s+r\nd\nr w)$.
The update of the DEIM interpolation points (line 11) can be performed, for example, by means of the greedy algorithm described in \cite{CS10} with complexity is $O(\nd^3)+O(\nd^2\nh)$, \emph{cf.} \cite{DG16}. This cost is linear in $\nh$ because, at each iteration of \cite[Algorithm 1]{CS10}, the norm of the $\nh$-dimensional DEIM residual needs to be computed.
To reduce the computational burden of this step, one can update only the indices associated with the DEIM basis vectors that have undergone the largest rotations in the DEIM basis update from $\deimb_{j}$ to $\deimb_{j+1}$, as suggested in \cite[Section 4.1]{Peher15}.

\subsection{Sampling points update}\label{sec:samp_points_update}
A crucial aspect of \Cref{alg:basis_adapt} is the definition of the sampling matrix $S_j$.
To determine the best possible choice of sampling points, we analyze the reduction in the residual associated with the DEIM update and the distance of the DEIM space to the best approximation space at a given time.
To this end, we first show that the DEIM update minimizes the projection error of the residual at the sampling points onto the space spanned by the rows of $C_j$.
\begin{lemma}\label{theo:update_error}
Let $\overline{r}$ be the rank of the matrix $S_j^\top R_jC_j^\top\in\Rbb^{\nd_s\times\nd}$ and assume that the DEIM coefficient matrix $C_j\in\Rbb^{\nd\times\overline{w}}$ has full row-rank.
Let $\deimb_{j+1}$ be the rank-$\overline{r}$ update given by \Cref{alg:basis_adapt}. If $\Cbb_j=C_j^{\top}(C_jC_j^{\top})^{-1}C_j\in\Rbb^{\overline{w}\times\overline{w}}$, then
\begin{equation*}
    \normF{S_j^{\top}(\deimb_{j+1}C_j-F_j)}=\normF{S_j^{\top}R_j(I-\Cbb_j)}.
\end{equation*}
\end{lemma}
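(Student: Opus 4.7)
The plan is to connect the lemma back to the minimization problem \eqref{eq:min_prob} and to recognize the right-hand side as the norm of the orthogonal projection onto the row-span of $C_j$.

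First, I would unfold the definitions. Because $S_j^\top\albf_j=\abf_j$ and $R_j=\deimb_jC_j-F_j$, plugging the update $\deimb_{j+1}=\deimb_j+\albf_j\bbf_j^\top$ into $S_j^\top(\deimb_{j+1}C_j-F_j)$ gives
\begin{equation*}
   S_j^\top(\deimb_{j+1}C_j-F_j)=S_j^\top R_j+\abf_j\bbf_j^\top C_j.
\end{equation*}
By \Cref{theo:minprob_eig_equivalence}, when the update rank is $\overline{r}=\rank(S_j^\top R_jC_j^\top)$, the pair $(\abf_j,\bbf_j)$ produced by \Cref{alg:basis_adapt} is a global minimizer of $\Ocal_j(\abf,\bbf)=\normF{S_j^\top R_j+\abf\bbf^\top C_j}^2$ over rank-$\overline{r}$ pairs. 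So it suffices to show that the minimal value of $\Ocal_j$ over such pairs equals $\normF{S_j^\top R_j(I-\Cbb_j)}^2$.

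Next I would relax the rank-$\overline{r}$ factored problem to the linear least-squares problem
\begin{equation*}
   \min_{X\in\Rbb^{\nd_s\times\nd}}\normF{S_j^\top R_j+XC_j}^2.
\end{equation*}
Since $C_j$ has full row rank $\nd$, $C_jC_j^\top$ is invertible and the unique minimizer is $X^\star=-S_j^\top R_jC_j^\top(C_jC_j^\top)^{-1}$, so $X^\star C_j=-S_j^\top R_j\Cbb_j$ and the optimal residual is exactly $S_j^\top R_j(I-\Cbb_j)$. Because $\Cbb_j$ is the orthogonal projector onto $\col(C_j^\top)$, this is also the Frobenius-best approximation of $-S_j^\top R_j$ by a matrix whose rows lie in the row-span of $C_j$.

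The key step is verifying that this unconstrained optimum is attainable under the rank-$\overline{r}$ constraint, so the relaxation is tight. I would compute
\begin{equation*}
   \rank(X^\star C_j)=\rank\bigl(S_j^\top R_jC_j^\top(C_jC_j^\top)^{-1}C_j\bigr)=\rank(S_j^\top R_jC_j^\top)=\overline{r},
\end{equation*}
using invertibility of $C_jC_j^\top$ and full row rank of $C_j$. Hence $X^\star C_j$ admits a factorization $\abf^\star(\bbf^\star)^\top C_j$ with $(\abf^\star,\bbf^\star)\in\Vcal_{\overline{r}}(\nd_s,\nd)$: take a thin rank-$\overline{r}$ decomposition $X^\star=\abf^\star U^\top$ and then define $\bbf^\star=C_j^\top(C_jC_j^\top)^{-1}\bigl(U^\top C_jC_j^\top\bigr)^\top$ or, more directly, write $X^\star C_j=\widetilde U \widetilde V^\top$ with both factors of rank $\overline{r}$ and solve $\bbf^\top C_j=\widetilde V^\top$ via the right pseudoinverse $\bbf=(\widetilde V(C_j^\top)^{+})^\top$, which exists and has rank $\overline{r}$ because $C_j$ has full row rank. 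Consequently the rank-$\overline{r}$ minimum equals the unconstrained minimum, and by \Cref{theo:minprob_eig_equivalence} this value is attained by $(\abf_j,\bbf_j)$. Taking the Frobenius norm of both sides of the opening identity yields
\begin{equation*}
   \normF{S_j^\top(\deimb_{j+1}C_j-F_j)}=\normF{S_j^\top R_j+\abf_j\bbf_j^\top C_j}=\normF{S_j^\top R_j(I-\Cbb_j)},
\end{equation*}
which is the claim. The only subtle point — and the one I would be most careful about — is the feasibility argument: showing that the linear-least-squares optimum respects the rank-$\overline{r}$ constraint so that one may invoke \Cref{theo:minprob_eig_equivalence} without loss.
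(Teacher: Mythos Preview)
Your argument is correct and takes a genuinely different route from the paper. The paper works directly with the explicit minimum value from \Cref{theo:minprob_eig_equivalence}: it identifies the eigenvalues $\lambda_i$ of \eqref{eq:gen_eval} with the squared singular values of $M_j:=S_j^\top R_jC_j^\top L_j^{-\top}$ (where $L_jL_j^\top=C_jC_j^\top$ is a Cholesky factorization), so that \eqref{eq:update_error} with $r=\overline{r}$ becomes $\normF{S_j^\top R_j}^2-\normF{M_j}^2$, and then uses trace manipulations together with the idempotence and symmetry of $\Cbb_j$ to rewrite this difference as $\normF{S_j^\top R_j(I-\Cbb_j)}^2$. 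You instead relax the rank-$\overline{r}$ factored problem to the unconstrained linear least-squares problem $\min_X\normF{S_j^\top R_j+XC_j}^2$, solve it explicitly to obtain the projection residual $S_j^\top R_j(I-\Cbb_j)$, and then argue that the relaxation is tight because the unconstrained minimizer $X^\star=-S_j^\top R_jC_j^\top(C_jC_j^\top)^{-1}$ already has rank $\overline{r}$. Your approach is more conceptual and bypasses the eigenvalue bookkeeping; the paper's version keeps the spectral link to \Cref{theo:minprob_eig_equivalence} explicit, which is useful since that identity \eqref{eq:update_error_sigma} is reused elsewhere. One simplification in your feasibility step: since $X^\star$ itself has rank $\overline{r}$ (as $(C_jC_j^\top)^{-1}$ is invertible), any rank factorization $X^\star=\abf^\star(\bbf^\star)^\top$ with $\abf^\star\in\Rbb^{\nd_s\times\overline{r}}$ and $\bbf^\star\in\Rbb^{\nd\times\overline{r}}$ immediately yields $(\abf^\star,\bbf^\star)\in\Vcal_{\overline{r}}(\nd_s,\nd)$ with $\abf^\star(\bbf^\star)^\top C_j=X^\star C_j$; the pseudoinverse detour is unnecessary.
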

\begin{proof}
For all $i=1,\dots,\overline{r}$, the eigenpairs $(\lambda_i,b_i)$ satisfy the generalized eigenvalue problem \eqref{eq:gen_eval}, i.e.
$C_j(S_j^{\top}R_j)^{\top}(S_j^{\top}R_j)C_j^{\top}\bbf=C_j C_j^{\top}\bbf\Lambda$,
where $\Lambda=\text{diag}(\lambda_1,\dots,\lambda_{\overline{r}})$.
Since $C_j$ has full row-rank, the matrix $C_jC_j^{\top}$ is symmetric and positive definite, thus admitting the Cholesky decomposition $C_jC_j^{\top}=L_jL_j^{\top}$, where $L_j\in\Rbb^{\nd\times\nd}$ is lower triangular. We introduce the change of variables $\cbf=L_j^{\top}\bbf$, so that the eigenvalue problem becomes
$L_j^{-1}C_j(S_j^{\top}R_j)^{\top}(S_j^{\top}R_j)C_j^{\top}L_j^{-\top}\cbf=\cbf\Lambda$.
This means that the eigenvalues of the generalized problem \eqref{eq:gen_eval} coincide with the squared singular values of the matrix $M_j:=S_j^{\top}R_jC_j^{\top}L_j^{-\top}\in\Rbb^{\nd_s\times\nd}$. Then, equation \eqref{eq:update_error} can be written as
\begin{equation}\label{eq:update_error_sigma}
    \norm{S_j^{\top}R_j+\abf\bbf^{\top}C_j}_F^2=\norm{S_j^{\top}R_j}_F^2-\sum_{\ell=1}^{\overline{r}}\sigma_{\ell}^2,
\end{equation}
where $\sigma_1\geq\dots\geq\sigma_{\overline{r}}>0$ are the singular values of $M_j$.
Since $\overline{r}=\text{rank}(S_j^\top R_jC_j^\top L_j^{-\top})$,
\begin{equation*}
    \norm{S_j^{\top}R_j+\abf\bbf^{\top}C_j}_F^2=
    \norm{S_j^{\top}R_j}_F^2-\norm{S_j^{\top}R_jC_j^{\top}L_j^{-\top}}_F^2.
\end{equation*}
Moreover, by the cyclic property of the trace, it holds
\begin{equation*}
    \norm{S_j^{\top}R_j}_F^2 - \norm{S_j^{\top}R_jC_j^{\top}L_j^{-\top}}_F^2= 
    \text{tr}((I-C_j^{\top}(C_jC_j^{\top})^{-1}C_j)(S_j^{\top}R_j)^{\top}(S_j^{\top}R_j)).
\end{equation*}
Since $\Cbb_j=\Cbb_j^{\top}$ and $\Cbb_j^2=\Cbb_j$, we have $I-\Cbb_j=(I-\Cbb_j)(I-\Cbb_j)^{\top}$, and
\begin{equation*}
    \norm{S_j^{\top}R_j+\abf\bbf^{\top}C_j}_F^2 = \text{tr}((I-\Cbb_j)(I-\Cbb_j)^{\top}(S_j^{\top}R_j)^{\top}(S_j^{\top}R_j)),
\end{equation*}
which concludes the proof.
\end{proof}

The number of non-zero eigenvalues in the generalized eigenvalue problem \eqref{eq:gen_eval} coincides with the number $\overline{r}$ of non-zero singular values of $S_j^\top R_jC_j^\top L_j^{-\top}$.
Furthermore, owing to \eqref{eq:update_error_sigma}, including eigenvectors of \eqref{eq:gen_eval} corresponding to zero eigenvalues as columns of $\bbf$ will have no effect on the value of the objective function \eqref{eq:objective}. This suggests that the rank $r$ of the update should be chosen so that $r\leq\overline{r}$.

In the following result we study the distance between the DEIM space resulting from the update described in \Cref{alg:basis_adapt} and the best possible approximation space, namely the one spanned by the columns of $F_j$, for $j=0,\ldots,\nad$. The resulting bound gives us an indication on how the sampling matrix can be chosen to minimize the error.
Note that the result in the following theorem is analogous to \cite[Proposition 2]{Peher20}.
\begin{theorem}\label{theo:rhoj_squared}
Assume that 
$C_j\in\Rbb^{\nd\times\overline{w}}$ has full row-rank. Let $\deimb_{j+1}=\deimb_j+\albf_j\bebf_j^{\top}$ be the rank-$\overline{r}$ update of $\deimb_j$ obtained with \Cref{alg:basis_adapt}.
Let $\Ucal_{j+1}$ be the space spanned by the columns of $\deimb_{j+1}$, and let the columns of $F_j$ belong to the $\nd$-dimensional space $\bUcal_{j+1}$. Then, the distance between $\bUcal_{j+1}$ and $\Ucal_{j+1}$ is bounded as
    \begin{equation}\label{eq:distance}
        d(\bUcal_{j+1},\Ucal_{j+1}):=\norm{\overline{\deimb}_{j+1}-\deimb_{j+1}\deimb_{j+1}^{\top}\overline{\deimb}_{j+1}}_F^2\leq\frac{\rho_j^2}{\sigma_{\min}^2(F_j)}
    \end{equation}
where $\sigma_{\min}(F_j)$ is the smallest nonzero singular value of $F_j$ and
    \begin{equation*}
        \rho_j^2:=\normF{R_j(I-\Cbb_j)}^2+\normF{\Check{S}_j^{\top}R_j\Cbb_j}^2=\normF{R_j}^2-\normF{S_j^{\top}R_j\Cbb_j}^2.
    \end{equation*}    
\end{theorem}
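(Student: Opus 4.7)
The plan is to split the proof into two stages: first reduce the subspace distance to a best-approximation error of $F_j$ in $\Ucal_{j+1}$, and then bound that error via the residual of a well-chosen (suboptimal) coefficient matrix. The two main ingredients will be \Cref{theo:update_error}, which controls the sampled rows through the Lemma, and the row-preservation identity \eqref{eq:Scheck}, which controls the complementary rows.

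For the first stage, since $F_j$ has rank $\nd$ with column space $\bUcal_{j+1}$, I would take its thin SVD $F_j = \bar{\deimb}_{j+1}\Sigma V^\top$, so that $\bar{\deimb}_{j+1}=F_jV\Sigma^{-1}$ with $\|\Sigma^{-1}\|_2=1/\sigma_{\min}(F_j)$ and $V$ has orthonormal columns. Substituting this expression into the defining expression for $d(\bUcal_{j+1},\Ucal_{j+1})$ and using $\|AB\|_F\leq\|A\|_F\|B\|_2$ then yields
\begin{equation*}
d(\bUcal_{j+1},\Ucal_{j+1}) \leq \frac{\normF{(I-\deimb_{j+1}\deimb_{j+1}^\top)F_j}^2}{\sigma_{\min}^2(F_j)}.
\end{equation*}

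For the second stage, after replacing $\deimb_{j+1}$ by an orthonormal basis of $\Ucal_{j+1}$ (an operation that leaves the distance unchanged), $\deimb_{j+1}\deimb_{j+1}^\top$ is the orthogonal projector onto $\Ucal_{j+1}$, so $\normF{(I-\deimb_{j+1}\deimb_{j+1}^\top)F_j}\leq\normF{\deimb_{j+1}X-F_j}$ for every $X$. The key step is to choose $X=C_j$ and split the rows via $[S_j,\Check{S}_j]$, obtaining
\begin{equation*}
\normF{\deimb_{j+1}C_j-F_j}^2 = \normF{S_j^\top(\deimb_{j+1}C_j-F_j)}^2 + \normF{\Check{S}_j^\top(\deimb_{j+1}C_j-F_j)}^2.
\end{equation*}
By \Cref{theo:update_error} the first summand equals $\normF{S_j^\top R_j(I-\Cbb_j)}^2$, while \eqref{eq:Scheck} gives $\Check{S}_j^\top\deimb_{j+1}=\Check{S}_j^\top\deimb_j$, reducing the second summand to $\normF{\Check{S}_j^\top R_j}^2$. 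Combining these with the Pythagorean identities induced by the symmetric idempotent $\Cbb_j$ and by the orthogonal row-splitting $[S_j,\Check{S}_j]$ then produces both equivalent forms of $\rho_j^2$ stated in the theorem.

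The main subtlety I anticipate is the orthonormality assumption on $\deimb_{j+1}$, since the rank-$r$ correction \eqref{eq:Uupdate} does not preserve orthogonality in general. This is settled by observing that both $d(\bUcal_{j+1},\Ucal_{j+1})$ and the best-approximation bound depend only on the column space $\Ucal_{j+1}$; replacing $\deimb_{j+1}$ by any orthonormal basis of $\Ucal_{j+1}$ therefore leaves every estimate invariant, which justifies the assumption used in the second stage.
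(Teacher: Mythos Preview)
Your proposal is correct and follows essentially the same route as the paper: both arguments compute $\normF{\deimb_{j+1}C_j-F_j}^2=\rho_j^2$ via the row splitting $[S_j,\Check{S}_j]$ together with \Cref{theo:update_error} and \eqref{eq:Scheck}, and both pass from the subspace distance to this residual through the best-approximation property of the projector combined with a singular-value bound (you write $\bar{\deimb}_{j+1}=F_jV\Sigma^{-1}$ and use $\|AB\|_F\le\|A\|_F\|B\|_2$, while the paper writes $F_j=\bar{\deimb}_{j+1}\widetilde{F}_j$ and uses $\|M\widetilde{F}_j\|_F\ge\sigma_{\min}(\widetilde{F}_j)\|M\|_F$, which is the same inequality read the other way). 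Your explicit remark about replacing $\deimb_{j+1}$ by an orthonormal basis of $\Ucal_{j+1}$ is a point the paper leaves implicit.
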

\begin{proof}
First note that the update $\albf_j\bebf_j^{\top}$ only changes the rows of $\deimb_j$ corresponding to the sampling points $S_j$ as shown in \eqref{eq:Scheck}.
\Cref{theo:update_error} yields
\begin{align}
\begin{split}\label{eq:update_error_expand}
    \normF{\deimb_{j+1}C_j-F_j}^2
    & =\normF{S_j^{\top}R_j(I-\Cbb_j)}^2+\normF{\Check{S}_j^{\top}R_j}^2\\
    & =\normF{R_j(I-\Cbb_j)}^2-\normF{\Check{S}_j^{\top}R_j(I-\Cbb_j)}^2+\normF{\Check{S}_j^{\top}R_j}^2.
\end{split}
\end{align}
The linearity and cyclic property of the trace, together with the fact that $\Cbb_j$ is a projection, give
$\normF{\Check{S}_j^{\top}R_j}^2-\normF{\Check{S}_j^{\top}R_j(I-\Cbb_j)}^2=\normF{\Check{S}_j^{\top}R_j\Cbb_j}^2$.
Hence, equation \eqref{eq:update_error_expand} yields $\normF{\deimb_{j+1}C_j-F_j}^2=\rho_j^2$.

Let the columns of $\overline{\deimb}_{j+1}$ form an orthonormal basis of $\bUcal_{j+1}$. Since the columns of $F_j$ belong to $\bUcal_{j+1}$, there is a matrix $\widetilde{F}_j\in\Rbb^{\nd\times\overline{w}}$ such that $F_j=\overline{\deimb}_{j+1}\widetilde{F}_j$. Hence,
\begin{align}
\begin{split}\label{eq:proj_error_Fj}
    \norm{F_j-\deimb_{j+1}\deimb_{j+1}^{\top}F_j}_F^2
    &=\norm{(\overline{\deimb}_{j+1}-\deimb_{j+1}\deimb_{j+1}^{\top}\overline{\deimb}_{j+1})\widetilde{F}_j}_F^2\\
    &\geq\norm{\overline{\deimb}_{j+1}-\deimb_{j+1}\deimb_{j+1}^{\top}\overline{\deimb}_{j+1}}_F^2\sigma_{\min}^2(\widetilde{F}_j),
\end{split}
\end{align}
and $\sigma_{\min}(\widetilde{F}_j)=\sigma_{\min}(F_j)$ since $\overline{\deimb}_{j+1}$ is orthonormal.
The conclusion follows by combining \eqref{eq:proj_error_Fj} with the bound
$\norm{F_j-\deimb_{j+1}\deimb_{j+1}^{\top}F_j}_F^2\leq\norm{F_j-\deimb_{j+1}C_j}_F^2.$
\end{proof}

If $\rho_j=0$ in \eqref{eq:distance}, then the spaces $\Ucal_{j+1}$ and $\bUcal_{j+1}$ coincide, and the adapted DEIM basis $\deimb_{j+1}$ can exactly represent all snapshots $F_j$ of the reduced Jacobian in the window.
This observation suggests that the ideal set of sample indices $S_j$ is the one that minimizes $\rho_j$. In particular, since $R_j$ and $\Cbb_j$ are fixed, we choose the $S_j$ that minimizes $\normF{\Check{S}_jR_j\Cbb_j}$ or, equivalently, maximizes $\normF{S_jR_j\Cbb_j}$, as described in \Cref{alg:indices_adapt}.
We remark the adaptive method proposed in \cite{Peher20}
is based on a different update of the DEIM basis, performed via SVD of the residual matrix $R_j$ (see \cite[Algorithm 3]{Peher20}), and the sampling matrix $S_j$ is chosen to minimize $\normF{\Check{S}_jR_j}$.
%
%
\begin{algorithm}[H]
\caption{Adaptation of the sampling indices}\label{alg:indices_adapt}
\begin{algorithmic}[1]
{\small
\Procedure{$S_{j+1}$=adaptSampleIndices}{$\{\hrs^{\ell_{j+1}-w+1},\ldots,\hrs^{\ell_{j+1}}\}$, $\redb$, $\deimb_j$, $P_j$, $\nd_s$}
\State Build the snapshot matrix $F_j$ of the reduced Jacobian in the window.
\State Compute $R_j\Cbb_j=\deimb_jC_j-F_jC_j^{\top}(C_jC_j^{\top})^{-1}C_j$.
\State Take as new sampling indices the indices of the $\nd_s$ rows of $R_j\Cbb_j$ with largest norm.
\EndProcedure}
\end{algorithmic}
\end{algorithm}

To analyze the computational cost of \Cref{alg:indices_adapt}, we first observe that some of the quantities involved are already available from the basis update (\Cref{alg:basis_adapt}).
The operations required are the computation of the $\nh-\nd$ entries of $F_j$ not available from $P_j^\top F_j$, at a cost of order $O((\nh-\nd)\nr w(s_1+s_2))$.
To construct $\Cbb_j$ one can use the SVD of $C_j=\Ucal_j\Sigma_j\Vcal_j^\top$ so that $\Cbb_j=\widetilde{\Vcal}_j\widetilde{\Vcal}_j^\top$, where $\widetilde{\Vcal}_j\in\Rbb^{\overline{w}\times\nd}$ is obtained by selecting the first $\nd$ columns of $\Vcal_j$.
In this case, $R_j\Cbb_j=U_jC_j-F_j\widetilde{\Vcal}_j\widetilde{\Vcal}_j^\top$ can be computed with $O(\nh\nd\nr w)$ operations.
The arithmetic complexity of \Cref{alg:indices_adapt} is then
linear in $\nh$.
Since $\nh$ typically scales with the full dimension $\nf$ (see discussion in \Cref{rmk:decomp}), the update of the sampling indices turns out to be too expensive to be performed at each adaptation step. In particular, solving the adaptive hyper-reduced system may become computationally demanding for large values of $w$ or $\nd$, as observed in the numerical experiments, \Cref{tab:w} and \Cref{fig:error_vs_ctime_2k400} in particular. For this reason,
it is preferable to perform the update of the sampling indices every $\gamma$ steps, where $\gamma=\nu\delta$ and $\nu>1$.
Moreover, instead of being fixed \emph{a priori}, the number of sampling indices $\nd_s$ may be determined at each adaptation until $\normF{\Check{S}_jR_j\Cbb_j}$ is below a threshold tolerance. We refer to \Cref{sec:NLS} for a numerical study regarding these aspects.

The gradient-preserving adaptive DEIM scheme is summarized in \Cref{alg:SP-ADEIM}.

\begin{algorithm}[H]
\caption{Gradient-preserving adaptive DEIM }\label{alg:SP-ADEIM}
\begin{algorithmic}[1]
{\small
\Procedure{GP-ADEIM}{$\redb$, $\delta_0$, $\delta$, $w$, $\gamma$, $\nd_s$, $r$}
\State Compute $(\deimb_0,P_0)$ from snapshots $J_{\prm,\G}(\redb\redb^\top\fs^{\ell}(\prm))$ for $\ell=0,\ldots,\delta_0$.
\State $j\gets0$
\For{$\tau=1,\ldots,\nt-1$}
\State Compute $\hrs^\tau$ by solving \eqref{eq:lochr} with $\deimb_j$ and $P_j$.
\If{mod($\tau-\delta_0,\delta$) $=0$}
\State Collect reduced trajectories in the window: $\Zcal=\{\hrs^{\delta_0+j\delta-w+1},\ldots,\hrs^{\delta_0+j\delta}\}$.
\If{mod($\tau-\delta_0,\gamma$) $=0$}
\State $S_{j}\gets$\textsc{adaptSampleIndices}(\Zcal, $\redb$, $\deimb_{j}$, $P_{j}$, $\nd_s$) with \Cref{alg:indices_adapt}.
\Else{}
\State $S_{j}\gets S_{j-1}$
\EndIf
\State $(\deimb_{j+1},P_{j+1})\gets$ \textsc{adaptBasis}($\Zcal$, $\redb$, $\deimb_{j}$, $P_{j}$, $S_{j}$, $r$) with \Cref{alg:basis_adapt}.
\State $j\gets (\tau-\delta_0)/\delta+1$
\EndIf
\EndFor
\EndProcedure}
\end{algorithmic}
\end{algorithm}

\subsection{Conservation of the Hamiltonian}\label{sec:hamcons_adapt}
Similarly to the study conducted in \Cref{sec:ham_cons} for the non-adaptive hyper-reduction,
we assess the error in the conservation of the Hamiltonian due to its approximation via \emph{local} hyper-reduction. In the following result we show that the error between the
Hamiltonian evaluated at the full model solution and at the hyper-reduced solution at a given time is bounded by the sum of the local hyper-reduction error of $\G$ at all previous updates.
\begin{proposition}\label{prop:ham_cons_adapt}
Let $\prm\in\Sprm$ be fixed. Let $\fs^{\ell_j}(\prm)$ be an approximation of the solution $\fs(t^{\ell_j},\prm)$ of the full order system \eqref{eq:full_model} at time $t^{\ell_j}$, with $j=1,\dots,\nad+1$, obtained with a user-defined numerical time integrator. Similarly, let $\hrs^{\ell_j}(\prm)$ be an approximation of the solution $\hrs(t^{\ell_j},\prm)$ of the hyper-reduced system \eqref{eq:Hyperred_model} at time $t^{\ell_j}$.
Assume that $\fs^0(\prm)\in\col(\redb)$ and $\G(\redb\redb^{\top}\fs^0(\prm),\prm)\in\col(\deimb_0)$. Furthermore, assume that the basis is updated so that $\G(\redb\hrs^{\ell_j}(\prm),\prm)\in\col(\deimb_{j})$ for all $j=1,\dots,\nad$.
Then, the error $\Delta \Ham_j(\prm) := \seminorm{\Ham(y^{\ell_j}(\prm),\prm)-\Ham(\redb\hrs^{\ell_j}(\prm),\prm)}$
satisfies
\begin{equation}\label{eq:Hambound}
    \Delta \Ham_j(\prm)\leq\sum_{i=0}^{j-1}\big(\seminorm{\cv^{\top}(I-\Pbb_i) \G(\redb\hrs^{\ell_{i+1}}(\prm),\prm)} +
    \varepsilon_{\Ham^i_{hr}}^{[t^{\ell_i},t^{\ell_{i+1}}]}\big)
    +\varepsilon_{\Ham}^{[t^0,t^{\ell_j}]},
\end{equation}
where $\varepsilon_{\Ham^j_{hr}}^{[t^{\ell_j},t^{\ell{j+1}}]}:=\seminorm{\Ham_{hr}^j(\hrs^{\ell_{j+1}})-\Ham_{hr}^j(\hrs^{\ell_j})}$ and $\varepsilon_{\Ham}^{[t^0,t^{\ell_j}]}:=\seminorm{\Ham(\fs^{\ell_j})-\Ham(\fs^0)}$ are the errors in the Hamiltonian conservation in the specified interval and associated with the chosen temporal integrator.
\end{proposition}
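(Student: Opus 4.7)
The plan is to mimic the structure of the proof of \Cref{prop:ham_cons} but with a telescoping sum across the adaptation windows, so that each term in the telescope contributes either a local DEIM consistency error or a local integrator conservation error.

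First I would write $\Delta\Ham_j(\prm)\leq\varepsilon_{\Ham}^{[t^0,t^{\ell_j}]}+|\Ham(\fs^0)-\Ham(\redb\hrs^{\ell_j})|$ by adding and subtracting $\Ham(\fs^0)$ inside the absolute value. The first term is exactly the full-model integrator conservation error appearing in \eqref{eq:Hambound}; so the task reduces to bounding the second term. Here the assumption $\fs^0(\prm)\in\col(\redb)$ gives $\redb\hrs^0=\redb\redb^{\top}\fs^0=\fs^0$, so the second term equals $|\Ham(\redb\hrs^{\ell_j})-\Ham(\redb\hrs^0)|$ and I can telescope it as
\begin{equation*}
\Ham(\redb\hrs^{\ell_j})-\Ham(\redb\hrs^0)=\sum_{i=0}^{j-1}\bigl[\Ham(\redb\hrs^{\ell_{i+1}})-\Ham(\redb\hrs^{\ell_i})\bigr].
\end{equation*}

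Next, for each index $i$ I would insert the local hyper-reduced Hamiltonian $\Ham_{hr}^i$ used to advance on $(t^{\ell_i},t^{\ell_{i+1}}]$:
\begin{equation*}
\Ham(\redb\hrs^{\ell_{i+1}})-\Ham(\redb\hrs^{\ell_i})=\bigl[\Ham(\redb\hrs^{\ell_{i+1}})-\Ham_{hr}^i(\hrs^{\ell_{i+1}})\bigr]+\bigl[\Ham_{hr}^i(\hrs^{\ell_{i+1}})-\Ham_{hr}^i(\hrs^{\ell_i})\bigr]+\bigl[\Ham_{hr}^i(\hrs^{\ell_i})-\Ham(\redb\hrs^{\ell_i})\bigr].
\end{equation*}
Since only the nonlinear parts of $\Ham$ and $\Ham_{hr}^i$ differ, I use the identity $\Ham(\redb\hrs)-\Ham_{hr}^i(\hrs)=\cv^\top(I-\Pbb_i)\G(\redb\hrs,\prm)$ on both outer brackets. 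The middle bracket is, in absolute value, precisely $\varepsilon_{\Ham^i_{hr}}^{[t^{\ell_i},t^{\ell_{i+1}}]}$.

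The key observation, which eliminates the third bracket at every $i$, is that by hypothesis the bases are updated so that $\G(\redb\hrs^{\ell_i},\prm)\in\col(\deimb_i)$ for $i\geq 1$, hence $(I-\Pbb_i)\G(\redb\hrs^{\ell_i},\prm)=0$; for $i=0$ the same vanishing follows from $\redb\hrs^0=\fs^0$ together with $\G(\redb\redb^{\top}\fs^0,\prm)\in\col(\deimb_0)$. Summing over $i$ and applying the triangle inequality then yields exactly the stated bound \eqref{eq:Hambound}, with the DEIM-consistency contributions $|\cv^\top(I-\Pbb_i)\G(\redb\hrs^{\ell_{i+1}}(\prm),\prm)|$ coming from the first bracket at each $i$ and the per-window integrator errors $\varepsilon_{\Ham^i_{hr}}^{[t^{\ell_i},t^{\ell_{i+1}}]}$ from the middle bracket.

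The only delicate point I anticipate is bookkeeping the indexing in the telescope, in particular making sure that the local Hamiltonian $\Ham_{hr}^i$ inserted in the $i$-th bracket is really the one governing the dynamics on $(t^{\ell_i},t^{\ell_{i+1}}]$ (so that the middle bracket is controlled by the local conservation error of the time integrator), and that the vanishing of $(I-\Pbb_i)\G(\redb\hrs^{\ell_i},\prm)$ at the left endpoint of each window is consistent with the updating convention $\G(\redb\hrs^{\ell_i})\in\col(\deimb_i)$. Once this indexing is pinned down, the remainder is just the triangle inequality.
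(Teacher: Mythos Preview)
Your proposal is correct and follows essentially the same approach as the paper: both proofs reduce to bounding $|\Ham(\redb\hrs^{\ell_j})-\Ham(\redb\hrs^0)|$ via a telescoping sum across the adaptation windows, inserting the local hyper-reduced Hamiltonians $\Ham_{hr}^i$, and using the assumptions $\G(\redb\hrs^{\ell_i},\prm)\in\col(\deimb_i)$ to kill the left-endpoint terms. Your telescope (split $\Ham(\redb\hrs^{\ell_{i+1}})-\Ham(\redb\hrs^{\ell_i})$ and insert $\Ham_{hr}^i$ at both ends) is a slightly cleaner rearrangement of the paper's telescope (which instead chains through $\Ham_{hr}^0(\hrs^0),\Ham_{hr}^0(\hrs^{\ell_1}),\Ham_{hr}^1(\hrs^{\ell_1}),\ldots$ and then splits the jumps $|\Ham_{hr}^{i+1}(\hrs^{\ell_{i+1}})-\Ham_{hr}^i(\hrs^{\ell_{i+1}})|$ via $\Ham(\redb\hrs^{\ell_{i+1}})$), but the two decompositions collect exactly the same terms.
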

\begin{proof}
In this proof we omit the dependence of $\fs^{\ell_j}$, $\hrs^{\ell_j}$, and of $\Ham$ on $\prm$. The Hamiltonian error at the generic time instant $t^{\ell_j}$ can be bounded as 
\begin{equation*}
    \Delta \Ham_j(\prm)\leq\seminorm{\Ham(\fs^{\ell_j})-\Ham(\fs^0)}+\seminorm{\Ham(\fs^0)-\Ham(\redb\hrs^0)}+\seminorm{\Ham(\redb\hrs^0)-\Ham(\redb\hrs^{\ell_j})}.
\end{equation*}
The first term on the right-hand side is $\varepsilon_{\Ham}^{[t^0,t^{\ell_j}]}$ and only depends on the time integration scheme. Moreover, the second term is zero since $\fs^0\in\col(\redb)$ by assumption. As shown in \Cref{sec:ham_cons}, this assumption can be easily enforced by a suitable shift of the initial condition. The third term can be bounded as
\begin{align*}
\begin{split}
    \seminorm{\Ham(\redb\hrs^0)-\Ham(\redb\hrs^{\ell_j})}
    &\leq\seminorm{\Ham(\redb\hrs^0)-\Ham_{hr}^0(\hrs^0)}
    +\sum_{i=0}^{j-1}\seminorm{\Ham_{hr}^i(\hrs^{\ell_{i+1}})-\Ham_{hr}^i(\hrs^{\ell_{i}})} \\
    &+\sum_{i=0}^{j-2}\seminorm{\Ham_{hr}^{i+1}(\hrs^{\ell_{i+1}})-\Ham_{hr}^{i}(\hrs^{\ell_{i+1}})}
    +\seminorm{\Ham_{hr}^{j-1}(\hrs^{\ell_j})-\Ham(\redb\hrs^{\ell_j})}.
\end{split}
\end{align*}
The first term on the right-hand side vanishes if $\G(\redb\hrs^0)\in\col(\deimb_0)$. Furthermore, the quantity $\seminorm{\Ham_{hr}^i(\hrs^{\ell_i})-\Ham_{hr}^i(\hrs^{\ell_{i+1}})}=\varepsilon_{\Ham^i_{hr}}^{[t^{\ell_i},t^{\ell_{i+1}}]}$ only depends on the temporal integrator. The third term can be further split as follows, for all $i=0,\dots,j-2$,
\begin{equation*}
    \seminorm{\Ham_{hr}^i(\hrs^{\ell_{i+1}})-\Ham_{hr}^{i+1}(\hrs^{\ell_{i+1}})}\leq\seminorm{\Ham_{hr}^i(\hrs^{\ell_{i+1}})-\Ham(\redb\hrs^{\ell_{i+1}})}+\seminorm{\Ham(\redb\hrs^{\ell_{i+1}})-\Ham_{hr}^{i+1}(\hrs^{\ell_{i+1}})},
\end{equation*}
where the second term vanishes if $\G(\redb\hrs^{\ell_{i+1}})\in\col(\deimb_{i+1})$.

The conclusion follows from the definition of the Hamiltonians involved.
\end{proof}


\section{Numerical experiments}\label{sec:numexp}
To numerically assess the performances of the proposed methods, we consider two test problems: the 2D shallow water equations and the 1D nonlinear Schr\"odinger equation. As numerical time integration schemes we compare the implicit midpoint rule (IMR), which is a symplectic time integrator \cite{HLW06}, and the Average Vector Field (AVF), which is not symplectic but exactly preserves the Hamiltonian \cite{QMcL08,CELL12}. The tolerance of the Newton method used within the implicit timestepping is set to $10^{-10}$.
In all the numerical tests, we evaluate the accuracy of the reduced and hyper-reduced solutions by means of the following relative errors:
\begin{equation}\label{eq:errNR}
    \Ecal_{L^2}=\sqrt{\frac{\sum_{i=0}^{\nt}\norm{\fs^i-\redb\rs^i}^2}{\sum_{i=0}^{\nt}\norm{\fs^i}^2}}, \qquad \Ecal_{\fin}=\frac{\norm{\fs^{\nt}-\redb\rs^{\nt}}}{\norm{\fs^{\nt}}}.
\end{equation}
We shall distinguish between $\Ecal_{L^2}^{\R}$, $\Ecal_{\fin}^{\R}$ and $\Ecal_{L^2}^{\HR}$, $\Ecal_{\fin}^{\HR}$ according to whether $\rs^i$ is the reduced or the hyper-reduced solution at time $t^i$, respectively.

\subsection{Two-dimensional shallow water equations}
The two-dimensional shallow water equations (2D-SWE) \cite{RS13} in the rectangular domain $\Omega=[-L_{x_1},L_{x_1}]\times[-L_{x_2},L_{x_2}]\subset\mathbb{R}^2$ reads
\begin{equation}\label{eq:2Dsw_eq_final}
    \left\{\begin{aligned}
    & \partial_t\chi+\gamma\nabla\cdot(\chi\nabla\Phi), & \quad \text{in }\Omega\times(0,T], \\ & \partial_t\Phi+\frac{\gamma}{2}\lvert\nabla\Phi\rvert^2+\gamma\chi=0, & \quad \text{in }\Omega\times(0,T],
    \end{aligned}\right.
\end{equation}
where $\partial_t$ is short-hand notation for the temporal derivative, $\nabla\cdot$ and $\nabla$ denote the divergence and gradient operators in $\mathbf{x}=(x_1,x_2)\in\Omega$, respectively; $\chi=\chi(\mathbf{x},t;\prm)$ is the height of the free surface and $\Phi=\Phi(\mathbf{x},t;\prm)$ is the scalar velocity potential, that is, $\nabla\Phi$ is the horizontal velocity normalized by the characteristic speed; $\gamma\in\mathbb{R}$ is a constant-valued parameter.
We complement the problem with periodic boundary conditions and initial conditions.
Problem \eqref{eq:2Dsw_eq_final} admits a Hamiltonian formulation with Hamiltonian
\begin{equation*}
    \mathcal{\widehat{H}}(\chi,\phi)=\displaystyle\frac{\gamma}{2}\int_\Omega (\chi\lvert\nabla\phi\rvert^2+\chi^2)\,\dx.
\end{equation*}

We discretize the spatial differential operators in \eqref{eq:2Dsw_eq_final} using centered second-order finite differences. To this end, we introduce a uniform Cartesian mesh on the domain $\Omega$ with $((x_1)_i,(x_2)_j) = (-L_{x_1}+i\Delta x_1,-L_{x_2}+j\Delta x_2)$, for $i=0,\ldots,\nf_{x_1}$, $j=0,\ldots,\nf_{x_2}$ and $\Delta x_1=2L_{x_1}/\nf_{x_1}$, $\Delta x_2=2L_y/\nf_{x_2}$. Let $\nf:=\nf_{x_1}\nf_{x_2}$ and $D_{x_1},D_{x_2}\in\Rbb^{\nf\times\nf}$ be the matrices corresponding to the finite-difference discretization of the spatial derivatives. Defining the vectors $\bm{\chi}(t,\prm)=(\chi_1,\dots,\chi_\nf)^{\top}$, and $\bm{\Phi}(t,\prm)=(\Phi_1,\dots,\Phi_\nf)^{\top}$, approximating the nodal values of $\chi$ and $\Phi$ respectively, we obtain the discretized system 
\begin{equation*}
\begin{cases}
    \Dot{\bm{\chi}}+\gamma \left[D_{x_1}(\bm{\chi}\odot D_{x_1}\bm{\Phi})+D_{x_2}(\bm{\chi}\odot D_{x_2}\bm{\Phi})\right]=\mathbf{0}, \\[1ex]
    \Dot{\bm{\Phi}}+\gamma \bm{\chi}+\frac{\gamma}{2}\left[(D_{x_1}\bm{\Phi})^2+( D_{x_2}\bm{\Phi})^2\right]=\mathbf{0}.
\end{cases}
\end{equation*}
Here, the symbol $\odot$ denotes the pointwise product of vectors.
This is a Hamiltonian system of the form \eqref{eq:full_model} with $\fs(t,\prm)=(\bm{\chi}^{\top}(t,\prm),\bm{\Phi}^{\top}(t,\prm))^{\top}\in\Rbb^{\Nf}$ and
\begin{equation*}
    \hn(\fs,\prm)=\frac{\gamma}{2}\sum_{i=1}^\nf\chi_i\left[(D_{x_1}\bm{\Phi})^2_i+(D_{x_2}\bm{\Phi})^2_i\right]=\cv^{\top}\G(\fs,\prm),
\end{equation*}
where $\cv\in\Rbb^\nf$ is the vector having all entries equal to one and $\G(\fs,\prm)\in\Rbb^\nf$ is defined as $(\G(\fs,\prm))_i:=\frac{\gamma}{2}\chi_i\left[(D_{x_1}\bm{\Phi})^2_i+(D_{x_2}\bm{\Phi})^2_i\right]$ for all $i=1,\ldots,\nf$. Note that, in this case, the decomposition of $\hn$ has $\nh=\nf$.

In the following experiments we set $L_{x_1}=L_{x_2}=2$ and take $\nf_{x_1}=\nf_{x_2}=50$ grid points per direction, corresponding to $\Delta x_1=\Delta x_2=0.08$ and $\nf=2500$. The final time is $T=10$ and the time step is $\Delta t=0.005$. The initial condition is $\Phi(\mathbf{x},0)=0$, while the vertical displacement $\chi$ is a Gaussian profile of the form $\chi(\mathbf{x},0;\prm)=1+\frac12 e^{-\beta\lvert\mathbf{x}\rvert^2}$. The problem parameter is $\prm=(\beta,\gamma)$ and it ranges in the set $\Pcal=\left[1.1,1.7\right]\times\left[0.7,1.3\right]\subset\Rbb^2$.
The shift described in \Cref{sec:ham_cons} is performed on the initial condition.

For this test case, we compare the performances of the full model, the reduced model and the hyper-reduced model obtained with the non-adaptive algorithm. To construct the reduced model, the full system is solved, in the offline phase, for $5$ equispaced values of $\beta$ and $\gamma$, for a total of $25$ training parameters in the training set $\Sprmh$. Next, the reduced basis is built via complex SVD using all $\nt$ temporal snapshots for every training parameter.
We choose two different dimensions of the reduced space, namely $\Nr=80$ and $\Nr=160$.
For the hyper-reduced problem, we build the DEIM basis via POD on snapshots of the reduced Jacobian at every $20$ time steps for each training parameter.

We test the reduced model \eqref{eq:red_model} and the hyper-reduced model \eqref{eq:Hyperred_model} for the randomly selected parameter $\prm=(1.6435,0.7762)\notin\Sprmh$. In \Cref{fig:2dsw_errors_avf_imr} we report the relative errors \eqref{eq:errNR} of the hyper-reduced model vs. the size $\nd$ of the DEIM basis, obtained with the AVF and IMR time integrators and for the two sizes $\Nr\in\{80,160\}$ of the reduced space.
As expected, the error of the hyper-reduced model converges to the error of the reduced model as $\nd$ increases. Moreover, as the size $\Nr$ of the reduced model increases, a larger number of DEIM bases are required to approach the limit value. The errors obtained with the two numerical time integration schemes are very similar.

\begin{figure}[H]
\centering
\begin{tikzpicture}
    \begin{groupplot}[
      group style={group size=2 by 3,
                  horizontal sep=1.4cm},
      width=6.5cm, height=5cm
    ]
    \nextgroupplot[ylabel={},
                  xlabel={$\nd$},
                  axis line style = thick,
                  grid=both,
                  minor tick num=0,
                  max space between ticks=50,
                  grid style = {gray,opacity=0.2},
                  xmin=25, xmax=150,
                  ymax = 10^(-2), ymin = 10^(-4),
                  ymode=log,
                  xlabel style={font=\footnotesize},
                  ylabel style={font=\footnotesize},
                  x tick label style={font=\footnotesize},
                  y tick label style={font=\footnotesize},
                  legend style={font=\footnotesize},
                  legend columns = 4,
                  legend image post style={dash phase=0pt},
                  legend style={at={(1.1,1.35)},anchor=north}]
        \addplot+[color=blue,solid,mark=diamond] table[x=m,y=finHR] {2dsw_errors_k40_avf.txt};
        \addplot+[color=blue,dashed,dash pattern=on 6pt off 6pt,mark=none] table[x=m,y=finR] {2dsw_errors_k40_avf.txt};
        \addplot+[color=cyan,solid,mark=square] table[x=m,y=finHR] {2dsw_errors_k40_imr.txt};
        \addplot+[color=cyan,dashed,dash pattern=on 6pt off 6pt,dash phase=6pt,mark=none] table[x=m,y=finR] {2dsw_errors_k40_imr.txt};
        \addplot+[color=red,solid,mark=diamond] table[x=m,y=L2L2HR] {2dsw_errors_k40_avf.txt};
        \addplot+[color=red,dashed,dash pattern=on 6pt off 6pt,mark=none] table[x=m,y=L2L2R] {2dsw_errors_k40_avf.txt}; 
        \addplot+[color=orange,solid,mark=square] table[x=m,y=L2L2HR] {2dsw_errors_k40_imr.txt};
        \addplot+[color=orange,dashed,dash pattern=on 6pt off 6pt,dash phase=6pt,mark=none] table[x=m,y=L2L2R] {2dsw_errors_k40_imr.txt};
        \legend{{$\Ecal_{\fin}^{\HR}$, AVF$\quad$},
        {$\Ecal_{\fin}^{\R}$, AVF$\quad$},
        {$\Ecal_{\fin}^{\HR}$, IMR$\quad$},
        {$\Ecal_{\fin}^{\R}$, IMR},
        {$\Ecal_{L^2}^{\HR}$, AVF$\quad$},
        {$\Ecal_{L^2}^{\R}$, AVF$\quad$},
        {$\Ecal_{L^2}^{\HR}$, IMR$\quad$},
        {$\Ecal_{L^2}^{\R}$, IMR}};
    \nextgroupplot[ylabel={},
                  xlabel={$\nd$},
                  axis line style = thick,
                  grid=both,                   ytick={10^(-1),10^(-2),10^(-3),10^(-4),10^(-5),10^(-6)},
                  minor x tick num=1,
                  minor y tick num=1,
                  extra y ticks ={0.02,0.03,0.04,0.05,0.06,0.07,0.08,0.09,0.002,0.003,0.004,0.005,0.006,0.007,0.008,0.009,0.0002,0.0003,0.0004,0.0005,0.0006,0.0007,0.0008,0.0009,0.00002,0.00003,0.00004,0.00005,0.00006,0.00007,0.00008,0.00009,0.000002,0.000003,0.000004,0.000005,0.000006,0.000007,0.000008,0.000009},
                  extra y tick labels = {},
                  max space between ticks=50,
                  grid style = {gray,opacity=0.2},
                  xmin=50, xmax=300,
                  ymax = 10^(-2), ymin = 10^(-5),
                  ymode=log,
                  xlabel style={font=\footnotesize},
                  ylabel style={font=\footnotesize},
                  x tick label style={font=\footnotesize},
                  y tick label style={font=\footnotesize}]
        \addplot+[color=blue,solid,mark=diamond] table[x=m,y=finHR] {2dsw_errors_k80_avf.txt};
        \addplot+[color=blue,dashed,dash pattern=on 6pt off 6pt,mark=none] table[x=m,y=finR] {2dsw_errors_k80_avf.txt};
        \addplot+[color=cyan,solid,mark=square] table[x=m,y=finHR] {2dsw_errors_k80_imr.txt};
        \addplot+[color=cyan,dashed,dash pattern=on 6pt off 6pt,dash phase=6pt,mark=none] table[x=m,y=finR] {2dsw_errors_k80_imr.txt};
        \addplot+[color=red,solid,mark=diamond] table[x=m,y=L2L2HR] {2dsw_errors_k80_avf.txt};
        \addplot+[color=red,dashed,dash pattern=on 6pt off 6pt,mark=none] table[x=m,y=L2L2R] {2dsw_errors_k80_avf.txt};
        \addplot+[color=orange,solid,mark=square] table[x=m,y=L2L2HR] {2dsw_errors_k80_imr.txt};
        \addplot+[color=orange,dashed,dash pattern=on 6pt off 6pt,dash phase=6pt,mark=none] table[x=m,y=L2L2R] {2dsw_errors_k80_imr.txt};
    \end{groupplot}
\end{tikzpicture}
\caption{2D-SWE. Errors \eqref{eq:errNR} of the reduced and hyper-reduced models vs. size $\nd$ of the DEIM space. The size of the reduced model is $\Nr=80$ (left) and $\Nr=160$ (right). Comparison of AVF and IMR for time integration.}\label{fig:2dsw_errors_avf_imr}
\end{figure}
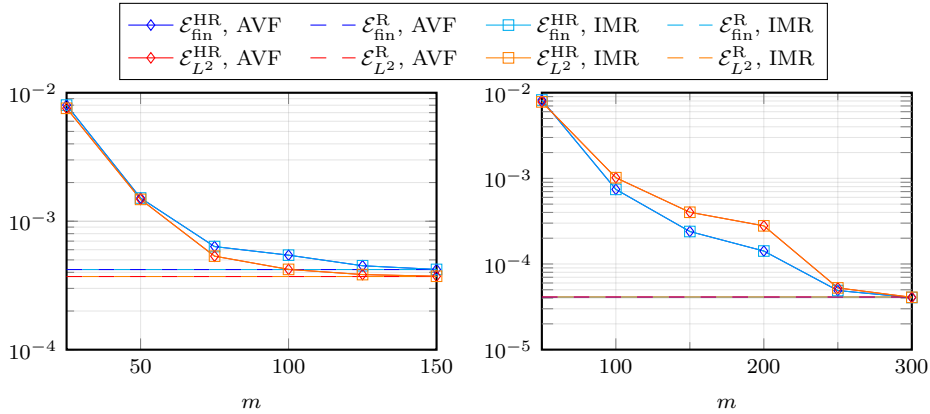

The computational times required by the reduced and hyper-reduced models solved using the AVF and the IMR time integrator are reported in \Cref{tab:2dsw_ctimes}. For $\Nr=80$, solving the hyper-reduced system with, e.g., $\nd=150$ is about $37$ times faster than solving the reduced system for AVF, and about $34$ times faster for IMR. For $\Nr=160$, solving the hyper-reduced system with $\nd=300$ is about $18$ times faster than solving the reduced system for AVF, and about $15$ times faster for IMR. Note that, in this test case, AVF is more computationally expensive than IMR, because an integral needs to be numerically evaluated at each time step.

\begin{table}[h!]
\footnotesize
\caption{2D-SWE. Computational times of the online phases of the reduced and hyper-reduced systems, $\Nr=80$ and $\Nr=160$. The computational time required to solve the full order system is $414.10\,s$ for AVF, $297.29\,s$ for IMR. The reported times are computed as averages over $3$ runs.}\label{tab:2dsw_ctimes}
\centering
\begin{tabular}{c|c|cccccc|}\cline{2-8}
& Reduced model & \multicolumn{6}{c|}{Hyper-reduced model}\\ \hline
\multicolumn{1}{|c|}{\multirow{4}{*}{AVF}} &
$\Nr=80$ & $\nd=25$ & $\nd=50$ & $\nd=75$ & $\nd=100$ & $\nd=125$ & $\nd=150$ \\ \cline{2-8}
\multicolumn{1}{|c|}{} & \textbf{276.02\,s} & \textbf{2.78\,s} & \textbf{3.86\,s} & \textbf{5.20\,s} & \textbf{5.81\,s} & \textbf{7.10\,s} & \textbf{7.44\,s} \\
\cline{2-8}\cline{2-8}\cline{2-8}
\multicolumn{1}{|c|}{} & $\Nr=160$ & $\nd=50$ & $\nd=100$ & $\nd=150$ & $\nd=200$ & $\nd=250$ & $\nd=300$ \\ \cline{2-8}
\multicolumn{1}{|c|}{} & \textbf{307.02\,s} & \textbf{7.44\,s} & \textbf{9.85\,s} & \textbf{12.16\,s} & \textbf{13.55\,s} & \textbf{16.08\,s} & \textbf{17.12\,s} \\ \hline\hline
\multicolumn{1}{|c|}{\multirow{4}{*}{IMR}} &
$\Nr=80$ & $\nd=25$ & $\nd=50$ & $\nd=75$ & $\nd=100$ & $\nd=125$ & $\nd=150$ \\ \cline{2-8}
\multicolumn{1}{|c|}{} & \textbf{128.22\,s} & \textbf{1.08\,s} & \textbf{1.60\,s} & \textbf{2.54\,s} & \textbf{2.96\,s} & \textbf{3.40\,s} & \textbf{3.71\,s} \\
\cline{2-8}\cline{2-8}\cline{2-8}
\multicolumn{1}{|c|}{} & $\Nr=160$ & $\nd=50$ & $\nd=100$ & $\nd=150$ & $\nd=200$ & $\nd=250$ & $\nd=300$ \\ \cline{2-8}
\multicolumn{1}{|c|}{} & \textbf{146.67\,s} & \textbf{5.25\,s} & \textbf{6.26\,s} & \textbf{7.19\,s} & \textbf{8.13\,s} & \textbf{9.36\,s} & \textbf{10.04\,s} \\ \hline
\end{tabular}
\end{table}

We then consider the conservation of the Hamiltonian for a fixed parameter $\prm=(1.6435,0.7762)$, whose dependence we omit in the following. In particular, we monitor the error $\seminorm{\Ham(\fs^0)-\Ham(\redb\hrs^j)}$ for all time indices $j=0,\ldots,\nt$.
This quantity is bounded, in turn, by the error $\Delta \Ham_j$ in approximating the reduced Hamiltonian with the hyper-reduced one: as derived in \Cref{sec:ham_cons}, it holds
\begin{equation*}
\seminorm{\Ham(\fs^0)-\Ham(\redb\hrs^j)} \leq \varepsilon_{\Ham}^{[t^0,t^j]} + \Delta \Ham_j\qquad\mbox{for all}\; j=0,\ldots,\nt.
\end{equation*}
\Cref{fig:2dsw_hamcons} shows the three terms involved in the bound at every time instant $t^j$ when the reduced space has dimension $\Nr=160$.
The error $\varepsilon_\Ham^{[t^0,t^j]}$ in the conservation of the full order Hamiltonian (black line in \Cref{fig:2dsw_hamcons}) only depends on the timestepping and reaches the tolerance of the Newton solver for AVF, while it depends on $\Delta t$ for IMR.
The error in the conservation of the Hamiltonian associated with the hyper-reduced model decreases as $\nd$ increases, that is, as the quality of the approximation of the Hamiltonian improves.
For $\nd$ sufficiently large, in this test case $\nd\geq 350$, the error due to the time integration is dominating over the error introduced by the Hamiltonian hyper-reduction.
The reason is that the hyper-reduced model with $\nd\geq 350$ is as accurate as the reduced model and, hence, the Hamiltonian is exactly preserved up to the error of the temporal integrator. As a further confirmation of the relationship between the conservation and the hyper-reduction of the Hamiltonian stated in \Cref{prop:ham_cons}, we also plot the DEIM approximation error $\seminorm{\Ham_{hr}(\hrs^j)-\Ham(\redb\hrs^j)}=\seminorm{\cv^\top(\Pbb \G(\redb\hrs^j)-\G(\redb\hrs^j))}$ for all $j=0,\ldots\nt$.

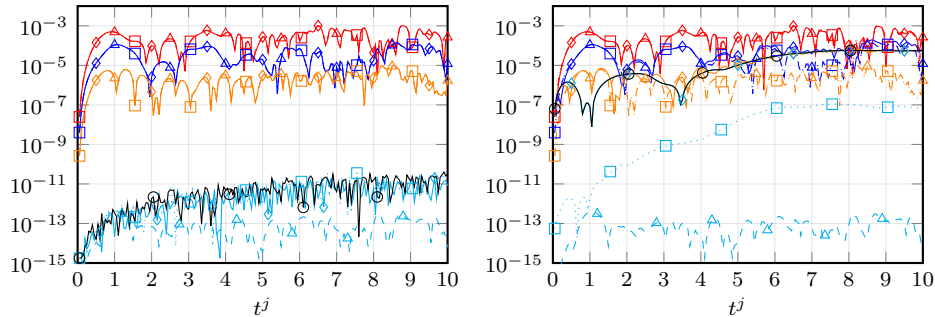
\begin{figure}[H]
\centering
\begin{tikzpicture}
    \begin{groupplot}[
      group style={group size=2 by 3,
                  horizontal sep=1.4cm},
      width=6.5cm, height=5cm
    ]
    \nextgroupplot[ylabel={},
                  xlabel={$t^j$},
                  xlabel style = {yshift=.2cm},
                  axis line style = thick,
                  grid=both,
                  minor tick num=0,
                  max space between ticks=50, ytick={10^(-15),10^(-13),10^(-11),10^(-9),10^(-7),10^(-5),10^(-3)},
                  xtick={0,1,2,3,4,5,6,7,8,9,10},
                  grid style = {gray,opacity=0.2},
                  xmin=0, xmax=10,
                  ymax = 10^(-2), ymin = 10^(-15),
                  ymode=log,
                  xlabel style={font=\footnotesize},
                  ylabel style={font=\footnotesize},
                  x tick label style={font=\footnotesize},
                  y tick label style={font=\footnotesize},
                  legend style={font=\footnotesize},
                  legend columns = 3,
                  legend style={at={(1.1,1.7)},anchor=north}]
        \addplot+[color=red,dotted,mark=square,mark repeat=30,mark phase=1,mark options={solid}] table[x=t,y=DeltaHim200] {2dsw_hamiltonian_avf_pt2.txt};
        \addplot+[color=red,solid,mark=diamond,mark repeat=30,mark phase=10,mark options={solid}] table[x=t,y=Hy0mHAzim200] {2dsw_hamiltonian_avf_pt2.txt};
        \addplot+[color=red,dashed,mark=triangle,mark repeat=30,mark phase=20,mark options={solid}] table[x=t,y=HhrzimHAzim200] {2dsw_hamiltonian_avf.txt};
        \addplot+[color=blue,dotted,mark=square,mark repeat=30,mark phase=1,mark options={solid}] table[x=t,y=DeltaHim250] {2dsw_hamiltonian_avf_pt2.txt};
        \addplot+[color=blue,solid,mark=diamond,mark repeat=30,mark phase=10,mark options={solid}] table[x=t,y=Hy0mHAzim250] {2dsw_hamiltonian_avf_pt2.txt};
        \addplot+[color=blue,dashed,mark=triangle,mark repeat=30,mark phase=20,mark options={solid}] table[x=t,y=HhrzimHAzim250] {2dsw_hamiltonian_avf.txt};
        \addplot+[color=orange,dotted,mark=square,mark repeat=30,mark phase=1,mark options={solid}] table[x=t,y=DeltaHim300] {2dsw_hamiltonian_avf_pt2.txt};
        \addplot+[color=orange,solid,mark=diamond,mark repeat=30,mark phase=10,mark options={solid}] table[x=t,y=Hy0mHAzim300] {2dsw_hamiltonian_avf_pt2.txt};
        \addplot+[color=orange,dashed,mark=triangle,mark repeat=30,mark phase=20,mark options={solid}] table[x=t,y=HhrzimHAzim300] {2dsw_hamiltonian_avf.txt};
        \addplot+[color=cyan,dotted,mark=square,mark repeat=30,mark phase=1,mark options={solid}] table[x=t,y=DeltaHim350] {2dsw_hamiltonian_avf_pt2.txt};
        \addplot+[color=cyan,solid,mark=diamond,mark repeat=30,mark phase=10,mark options={solid}] table[x=t,y=Hy0mHAzim350] {2dsw_hamiltonian_avf_pt2.txt};
        \addplot+[color=cyan,dashed,mark=triangle,mark repeat=30,mark phase=20,mark options={solid}] table[x=t,y=HhrzimHAzim350] {2dsw_hamiltonian_avf.txt};
        \addplot+[color=black,solid,mark=o,mark repeat=40] table[x=t,y=epsH] {2dsw_hamiltonian_avf_pt2.txt};
        \legend{{$\Delta \Ham_j, \nd=200\quad$},{$\seminorm{\Ham(\fs^0)-\Ham(\redb\hrs^j)}, \nd=200\quad$},{$\seminorm{\Ham_{hr}(\hrs^j)-\Ham(\redb\hrs^j)}$, $\nd=200$},{$\Delta \Ham_j, \nd=250\quad$},{$\seminorm{\Ham(\fs^0)-\Ham(\redb\hrs^j)}, \nd=250\quad$},{$\seminorm{\Ham_{hr}(\hrs^j)-\Ham(\redb\hrs^j)}$, $\nd=250$},{$\Delta \Ham_j, \nd=300\quad$},{$\seminorm{\Ham(\fs^0)-\Ham(\redb\hrs^j)}, \nd=300\quad$},{$\seminorm{\Ham_{hr}(\hrs^j)-\Ham(\redb\hrs^j)}$, $\nd=300$},{$\Delta \Ham_j, \nd=350\quad$},{$\seminorm{\Ham(\fs^0)-\Ham(\redb\hrs^j)}, \nd=350\quad$},{$\seminorm{\Ham_{hr}(\hrs^j)-\Ham(\redb\hrs^j)}$, $\nd=350$},{}};
    \nextgroupplot[ylabel={},
                  xlabel={$t^j$},
                  xlabel style = {yshift=.2cm},
                  axis line style = thick,
                  grid=both,
                  minor tick num=0,
                  max space between ticks=50, ytick={10^(-15),10^(-13),10^(-11),10^(-9),10^(-7),10^(-5),10^(-3)},
                  xtick={0,1,2,3,4,5,6,7,8,9,10},
                  grid style = {gray,opacity=0.2},
                  xmin=0, xmax=10,
                  ymax = 10^(-2), ymin = 10^(-15),
                  ymode=log,
                  xlabel style={font=\footnotesize},
                  ylabel style={font=\footnotesize},
                  x tick label style={font=\footnotesize},
                  y tick label style={font=\footnotesize}]
        \addplot+[color=red,dotted,mark=square,mark repeat=30,mark phase=1,mark options={solid}] table[x=t,y=DeltaHim200] {2dsw_hamiltonian_imr_pt2.txt};
        \addplot+[color=red,solid,mark=diamond,mark repeat=30,mark phase=10,mark options={solid}] table[x=t,y=Hy0mHAzim200] {2dsw_hamiltonian_imr_pt2.txt};
        \addplot+[color=red,dashed,mark=triangle,mark repeat=30,mark phase=20,mark options={solid}] table[x=t,y=HhrzimHAzim200] {2dsw_hamiltonian_imr.txt};
        \addplot+[color=blue,dotted,mark=square,mark repeat=30,mark phase=1,mark options={solid}] table[x=t,y=DeltaHim250] {2dsw_hamiltonian_imr_pt2.txt};
        \addplot+[color=blue,solid,mark=diamond,mark repeat=30,mark phase=10,mark options={solid}] table[x=t,y=Hy0mHAzim250] {2dsw_hamiltonian_imr_pt2.txt};
        \addplot+[color=blue,dashed,mark=triangle,mark repeat=30,mark phase=20,mark options={solid}] table[x=t,y=HhrzimHAzim250] {2dsw_hamiltonian_imr.txt};
        \addplot+[color=orange,dotted,mark=square,mark repeat=30,mark phase=1,mark options={solid}] table[x=t,y=DeltaHim300] {2dsw_hamiltonian_imr_pt2.txt};
        \addplot+[color=orange,solid,mark=diamond,mark repeat=30,mark phase=10,mark options={solid}] table[x=t,y=Hy0mHAzim300] {2dsw_hamiltonian_imr_pt2.txt};
        \addplot+[color=orange,dashed,mark=triangle,mark repeat=30,mark phase=20,mark options={solid}] table[x=t,y=HhrzimHAzim300] {2dsw_hamiltonian_imr.txt};
        \addplot+[color=cyan,dotted,mark=square,mark repeat=30,mark phase=1,mark options={solid}] table[x=t,y=DeltaHim350] {2dsw_hamiltonian_imr_pt2.txt};
        \addplot+[color=cyan,solid,mark=diamond,mark repeat=30,mark phase=10,mark options={solid}] table[x=t,y=Hy0mHAzim350] {2dsw_hamiltonian_imr_pt2.txt};
        \addplot+[color=cyan,dashed,mark=triangle,mark repeat=30,mark phase=20,mark options={solid}] table[x=t,y=HhrzimHAzim350] {2dsw_hamiltonian_imr.txt};
        \addplot+[color=black,solid,mark=o,mark repeat=40] table[x=t,y=epsH] {2dsw_hamiltonian_imr_pt2.txt};
    \end{groupplot}
\end{tikzpicture}
\caption{2D-SWE. Conservation of the Hamiltonian over time for different choices of $\nd$. The solid black lines with circles correspond to the error $\varepsilon_{\Ham}^{[t^0,t^j]}$ in the conservation of the full Hamiltonian due to the timestepping. AVF (left) and IMR (right) for time integration.
The reduced model has dimension $\Nr=160$ and the test parameter is $\prm=(1.6435,0.7762)$.}\label{fig:2dsw_hamcons}
\end{figure}

\subsection{Nonlinear Schr\"odinger equation}\label{sec:NLS}
As a second test case, we consider the nonlinear Schr\"odinger equation (NLS)
\begin{equation}\label{eq:schrodinger}
\imath\,\partial_tu + \partial_{xx}u+\epsilon\seminorm{u}^2u = 0,\quad x\in[-L,L], \,t\in(0,T],
\end{equation}
with initial condition $u(x,0;\prm)=\sqrt{2}(\cosh{x})^{-1}\exp{\left(\imath\frac{x}{2}\right)}$ and periodic boundary conditions. Here $\partial_{xx}$ denotes the second order derivative with respect to the spatial variable, and $\epsilon=\prm\in\Rbb$ is a parameter ranging in the set $\Pcal=[0.9,1.1]$. Writing $u(x,t;\prm)=q(x,t;\prm)+\imath p(x,t;\prm)$, for all $(x,t)\in[-L,L]\times[0,T]$, problem \eqref{eq:schrodinger} admits a Hamiltonian formulation with Hamiltonian given by
\begin{equation*}
    \mathcal{\widehat{H}}(q,p)=\frac{1}{2}\int_{-L}^L\left(\left(\partial_xp\right)^2+\left(\partial_xq\right)^2-\frac{\epsilon}{2}(q^2+p^2)^2\right)\,\dx.
\end{equation*}

Consider the uniform computational grid $x_i = -L+i\Delta x$, for $i=0,\ldots,\nf$ and
$\Delta x=2L/\nf$. Let $D_{xx}\in\Rbb^{\nf\times\nf}$ be the matrix corresponding to the central finite-difference discretization of the second order spatial derivative. Introducing the vectors $\mathbf{q}(t,\prm)=(q_1,\dots,q_\nf)^{\top}$ and $\mathbf{p}(t,\prm)=(p_1,\dots,p_\nf)^{\top}$,
where $q_i$ and $p_i$ are approximations to $q(x_i)$ and $p(x_i)$, we derive the system
\begin{equation*}
    \begin{cases}
    \dot{\mathbf{q}}=-D_{xx}\mathbf{p}-\epsilon(\mathbf{q}^2+\mathbf{p}^2)\odot\mathbf{p}, \\
    \dot{\mathbf{p}}=D_{xx}\mathbf{q}+\epsilon(\mathbf{q}^2+\mathbf{p}^2)\odot\mathbf{q},
    \end{cases}
\end{equation*}
which is of the form \eqref{eq:full_model} with $\fs(t,\prm)=(\mathbf{q}^\top(t,\prm),\mathbf{p}^\top(t,\prm))^\top\in\Rbb^{\Nf}$ and
\begin{equation*}
    \hn(\fs)=-\frac{\epsilon}{4}\sum_{i=1}^\nf(q_i^2+p_i^2)^2=\cv^{\top}\G(\fs).
\end{equation*}
Here $\cv\in\Rbb^\nf$ is the vector with entries all equal to one and $\G(\fs)\in\Rbb^\nf$ is defined as
$(\G(\fs))_i:=-\frac{\epsilon}{4}(q_i^2+p_i^2)^2$ for all $i=1,\ldots,\nf$.
In the following tests we set $L=\pi/l$ with $l=0.11$ as in \cite{AH17} and $\nf=2048$. AVF time integrator is applied in the temporal interval $[0,30]$ with time step $\Delta t=0.01$.

The reduced basis is built using the complex SVD on snapshots of the full model solution at each time step and for $\seminorm{\Sprmh}=11$ equispaced training parameters in the interval $\Pcal=[0.9,1.1]$. Each simulation of the full model takes about 220 seconds.

Before studying the performances of the proposed hyper-reduction technique, we compare it with the SDEIM proposed in \cite{AH17} and \cite{PM16}. SDEIM consists in applying a DEIM approximation to the nonlinear Hamiltonian gradient and to chose as DEIM basis the symplectic basis used to approximate the state, constructed by adding suitable snapshots to improve accuracy. The resulting method does not exactly preserve the gradient structure of the Hamiltonian vector field, as pointed out in \cite[page A17]{PM16} and in \cite[page 1717]{HP20}, but it ensures \emph{asymptotic} boundedness of the energy of the system \cite[Theorem 5.1]{PM16}. However, this property is not enough to guarantee stability of the resulting approximate solution.
To illustrate these facts, we compare our hyper-reduction technique (non-adaptive version) with SDEIM tested on the Schr\"odinger equation with $\prm = 1.0932$. Since in the SDEIM algorithm the DEIM basis is chosen to be the same as the reduced basis, we set $\nd=\Nr$ in the gradient-preserving hyper-reduction method so that the reduced and DEIM spaces have the same dimension for the two methods.
In particular we consider $\nr=\nd/2\in\{10,20,30,40,50,60\}$. To build the reduced and DEIM spaces, we consider all the snapshots of the full solution from the training phase, and one snapshot every $d_{snap}=20$ time steps for the nonlinear term.
\Cref{fig:pcolor_n40_n60} shows a pseudocolor plot of the numerical solutions on a space-time domain: The solution produced by SDEIM is unstable for $\Nr=80$ (first row) and inaccurate for $\Nr=120$ (second row). We also point out that, similarly, when classical DEIM is used to approximate the nonlinear term unstable solutions and unbounded errors are produced.

\begin{figure}[h!]
    \centering
	\includegraphics[width=\textwidth,height=5cm]{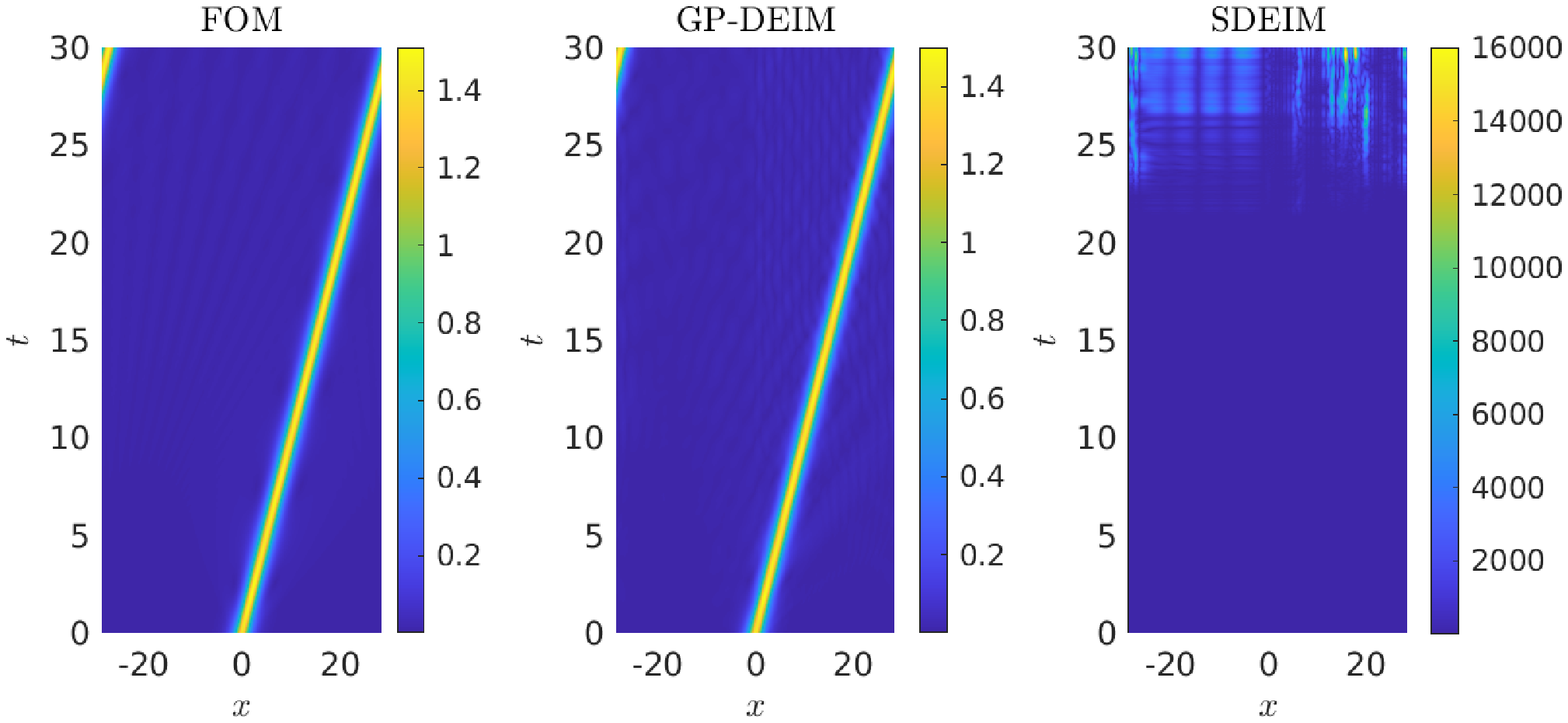}\\[2ex]
    \includegraphics[width=\textwidth,height=5cm]{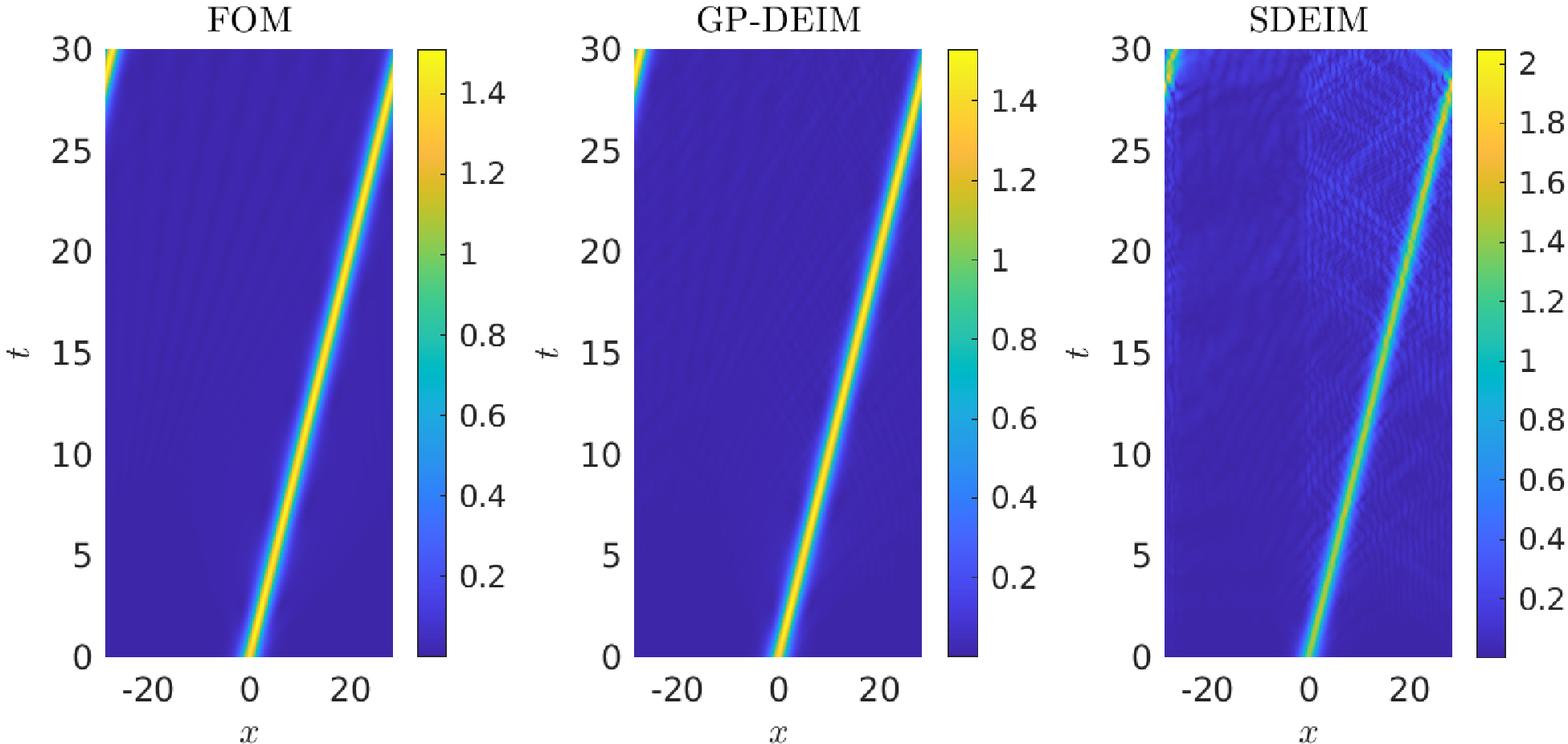}
    \caption{Numerical solution of the Schr\"odinger equation in space-time with $\prm = 1.0932$, for $\Nr=\nd=80$ (first row) and $\Nr=\nd=120$ (second row). The first column refers to the full order solution, the second column to the proposed gradient-preserving hyper-reduction, and the third column to the SDEIM of \cite{PM16,AH17}.}
    \label{fig:pcolor_n40_n60}
\end{figure}

In this problem, the dimension of the DEIM basis required by the hyper-reduced model to achieve the error of the reduced model can be rather large. This behavior can be ascribed to the fact that the Schr\"odinger equation and its Hamiltonian do not exhibit significant global reducibility properties.
\Cref{fig:singvalues} shows, on the left, the singular values of the full order solution obtained for $\nt=3000$ time steps and, on the right, the singular values of the snapshot matrix $M_J\in\Rbb^{\nf\times\Nr N_s}$ of the Jacobian for $N_s=150$ time instants (one every $20$ time steps). One value of the parameter is considered: $\prm=1.0932$.
Each line corresponds to a different dimension $\Nr$ of the reduced model.
We can observe that the singular values of the snapshots corresponding to the full order solution decay relatively slowly (\Cref{fig:singvalues}, left), which suggests that the dimension $\Nr$ of the reduced space needs to be sufficiently large to accurately represent the full dynamics. On the other hand, the reducibility of the Jacobian deteriorates as $\Nr$ increases (\Cref{fig:singvalues}, right).
This last remark also justifies our choice of performing hyper-reduction on the reduced Jacobian rather than on the full order one, as already pointed out in \Cref{rmk:FOMROMJ}.

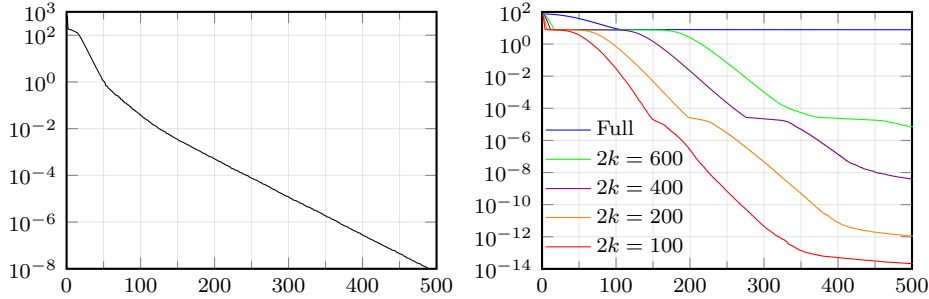
\begin{figure}[H]
\centering
\begin{tikzpicture}
    \begin{groupplot}[
      group style={group size=2 by 3,
                  horizontal sep=1.4cm},
      width=6.5cm, height=5cm
    ]
     \nextgroupplot[ylabel={},
                  xlabel={},
                  axis line style = thick,
                  grid=both,
                  minor tick num=1,
                  max space between ticks=50, ytick={10^(-8),10^(-6),10^(-4),10^(-2),1,10^(2),10^3},
                  xtick = {0,100,200,300,400,500},
                  grid style = {gray,opacity=0.2},
                  xmin=0, xmax=500,
                  ymax = 1e+3, ymin = 10^(-8),
                  ymode=log,
                  xlabel style={font=\footnotesize},
                  ylabel style={font=\footnotesize},
                  x tick label style={font=\footnotesize},
                  y tick label style={font=\footnotesize},
                  legend style={font=\footnotesize}]
        \addplot+[color=black,mark=none] table[x=svind,y=sv] {Sy_sv.txt};
        \nextgroupplot[ylabel={},
                  xlabel={},
                  axis line style = thick,
                  grid=both,
                  minor tick num=1,
                  max space between ticks=50, ytick={10^(-14),10^(-12),10^(-10),10^(-8),10^(-6),10^(-4),10^(-2),10^(0),10^(2)},
                  xtick = {0,100,200,300,400,500},
                  grid style = {gray,opacity=0.2},
                  xmin=0, xmax=500, 
                  ymax = 10^(2), ymin = 10^(-14),
                  ymode=log,
                  xlabel style={font=\footnotesize},
                  ylabel style={font=\footnotesize},
                  x tick label style={font=\footnotesize},
                  y tick label style={font=\footnotesize},
                  legend style={font=\footnotesize},
                  legend columns = 1,
                  legend cell align={left},
                  legend style={fill=none,draw=none,at={(0.42,0.62)}}]
        \addplot+[color=blue,mark=none] table[x=sv,y=full] {SJG_sv.txt};
        \addplot+[color=green,mark=none] table[x=sv,y=k300] {SJG_sv.txt};
        \addplot+[color=violet,mark=none] table[x=sv,y=k200] {SJG_sv.txt};
        \addplot+[color=orange,mark=none] table[x=sv,y=k100] {SJG_sv.txt};
        \addplot+[color=red,mark=none] table[x=sv,y=k50] {SJG_sv.txt};
        \legend{{Full},{$\Nr=600$},{$\Nr=400$},{$\Nr=200$},{$\Nr=100$}};
    \end{groupplot}
\end{tikzpicture}
\caption{NLS. Left: singular values of the snapshot matrix of the full order solution. Right: singular values of of the snapshot matrix $M_J\in\Rbb^{\nf\times\Nf N_s}$ of the Jacobian for different choices of $\nr$. Only the $500$ largest singular values are shown.}
\label{fig:singvalues}
\end{figure}

To address the lack of global reducibility of the Jacobian, we perform the adaptation of DEIM basis and indices described in \Cref{sec:adaptive_DEIM}. In this section we fix the size of the reduced basis to $\Nr=400$ and we consider $\prm=1.0932\notin\Sprmh$ as test parameter.

As a first test, we consider, at each update $j=1,\ldots,\nad$, the selection of the $\nd_s$ sampling points collected in the matrix $S_j$ and we compare three possible selection strategies:
(i) random;
(ii) based on the minimization of the residual norm $\normF{R_j}$ as proposed in \cite[Algorithm 1]{Peher20};
and (iii) based on the minimization of the projected residual norm $\normF{R_j\Cbb_j}$ motivated by \Cref{theo:rhoj_squared}, see also \Cref{alg:indices_adapt}.
We set the dimension of the DEIM basis to $\nd=100$, while the adaptation hyper-parameters are $\delta=\delta_0=\gamma=5$, $w=1$ and $r=\overline{r}$, namely a full-rank update is performed at each step. We compare the three selection strategies for two possible sizes of the sampling matrix, namely $\nd_s\in\{150,300\}$.
\Cref{fig:schro_samplingstrats} shows the error $\normF{\deimb_{j+1}C_j-F_j}$, at each update $j$, for the three selection criteria. It can be observed that the random algorithm gives the worst results, while the other two strategies do not yield significantly different results, at least in this simulation.

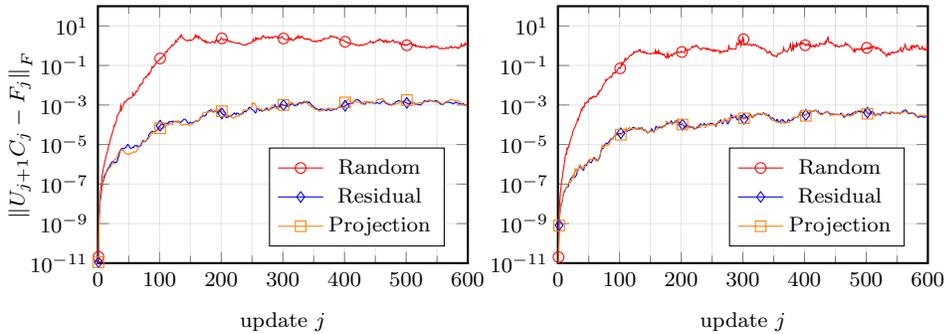
\begin{figure}[H]
\centering
\begin{tikzpicture}
    \begin{groupplot}[
      group style={group size=2 by 3,
                  horizontal sep=1.2cm},
      width=6.5cm, height=5cm
    ]
    \nextgroupplot[ylabel={$\normF{\deimb_{j+1}C_j-F_j}$},
                  ylabel style = {yshift=-.2cm},
                  xlabel={update $j$},
                  axis line style = thick,
                  grid=both,
                  minor tick num=1,
                  max space between ticks=20,
                  grid style = {gray,opacity=0.2},
                  xmin=0, xmax=600,
                  ymax = 10^(2), ymin = 10^(-11),
                  ymode=log,
                  ytick = {10^(-11),10^(-9),10^(-7),10^(-5),10^(-3),10^(-1),10},
                  xlabel style={font=\footnotesize},
                  ylabel style={font=\footnotesize},
                  x tick label style={font=\footnotesize},
                  y tick label style={font=\footnotesize},
                  legend style={font=\footnotesize},
                  legend columns = 1,
                  legend style={at={(0.7,0.45)},anchor=north}]
        \addplot+[color=red,mark=o,mark repeat=100,mark phase=1] table[x=adapt,y=random] {sc_samplingstrats_ms150.txt};
        \addplot+[color=blue,mark=diamond,mark repeat=100,mark phase=1] table[x=adapt,y=residual] {sc_samplingstrats_ms150.txt};
        \addplot+[color=orange,mark=square,mark repeat=100,mark phase=1] table[x=adapt,y=projection] {sc_samplingstrats_ms150.txt};
        \legend{{Random},
                {Residual},{Projection}
                };
        \nextgroupplot[ylabel={},
                  xlabel={update $j$},
                  axis line style = thick,
                  grid=both,
                  minor tick num=1,
                  max space between ticks=20,
                  grid style = {gray,opacity=0.2},
                  xmin=0, xmax=600,
                  ymax = 10^(2), ymin = 10^(-11),
                  ymode=log,
                  ytick = {10^(-11),10^(-9),10^(-7),10^(-5),10^(-3),10^(-1),10},
                  xlabel style={font=\footnotesize},
                  ylabel style={font=\footnotesize},
                  x tick label style={font=\footnotesize},
                  y tick label style={font=\footnotesize},
                  legend style={font=\footnotesize},
                  legend columns = 1,
                  legend style={at={(0.7,0.45)},anchor=north}]
        \addplot+[color=red,mark=o,mark repeat=100,mark phase=1] table[x=adapt,y=random] {sc_samplingstrats_ms300.txt};
        \addplot+[color=blue,mark=diamond,mark repeat=100,mark phase=1] table[x=adapt,y=residual] {sc_samplingstrats_ms300.txt};
        \addplot+[color=orange,mark=square,mark repeat=100,mark phase=1] table[x=adapt,y=projection] {sc_samplingstrats_ms300.txt}; 
        \legend{{Random},
                {Residual},{Projection}
                };
    \end{groupplot}
\end{tikzpicture}
\caption{NLS. Evolution of $\normF{\deimb_{j+1}C_j-F_j}$ for three different criteria of selection of the sampling points.
The number of sampling points is $\nd_s=150$ (left plot) and $\nd_s=300$ (right plot). The dimension of the reduced basis is $\Nr=400$ and the dimension of the DEIM basis is $\nd=100$.}
\label{fig:schro_samplingstrats}
\end{figure}

Next, we assess the adaptive hyper-reduction algorithm for different choices of the adaptation hyper-parameters -- $r$, $m_s$, $w$, $\delta$, and $\gamma$ -- in terms of: (i) accuracy of the solution, measured by the error $\Ecal_{L^2}^{\HR}$ from \eqref{eq:errNR}; (ii) computational time of both the offline phase, $t_{\off}$, and the online phase, $t_{\on}$, computed as the average time, in seconds, over $5$ independent runs; and (iii) conservation of the Hamiltonian via the error
$\Delta \Ham_{\nt}=\seminorm{\Ham(\fs^{\nt})-\Ham(\redb\hrs^{\nt})}$. We refer to \Cref{prop:ham_cons_adapt} for the analytic bound on $\Delta \Ham_{\nt}$; when AVF is used as numerical time integrator, the error sources in \eqref{eq:Hambound} that are independent on the reduction take values smaller than $10^{-10}$.

In all the following tests, we fix the size of the DEIM basis to $\nd=100$ and we let one hyper-parameter vary at a time. 
Based on the previous test and on \Cref{sec:adaptive_DEIM}, the sampling strategy in \Cref{alg:indices_adapt} is adopted for the update of the sampling indices.
As a reference, we include the performances of the non-adaptive hyper-reduction, where the offline phase consists of the construction of the DEIM basis from the matrix of the reduced Jacobian at full model snapshots obtained every $20$ time steps and $11$
training parameters.
If the adaptive scheme is employed instead, the offline phase involves solving the full system for $\delta_0\ll\nt$ time steps and only for the given test parameter.
This implies that
the offline phase will only require few seconds; on the other hand, the DEIM basis is adapted every $\delta$ time steps, which increases the computational cost of the online phase with respect to the non-adaptive scheme. 

\textbf{Effect of the rank $r$ of the DEIM update.}
We test the hyper-reduced model for different values of the rank $r$ of the DEIM update. Instead of fixing its value we let it change at each update $j$ based on the numerical rank of $S_j^\top R_jC_j^\top$ for a fixed tolerance $\tau_r$. The rank $r_j$ vs. the update number is shown in \Cref{fig:rj_and_msj} (left) for different choices of $\tau_r$, while the error $\Ecal_{L^2}^{\HR}$ of the hyper-reduced solution is reported in \Cref{tab:taur}. We observe that the error improves as $\tau_r$ decreases, while the cost of the online phase of the adaptive hyper-reduction algorithm is almost independent of $\tau_r$. This is due to the fact that the cost of the adaptation procedure is dominated by the computation of the DEIM residual matrix $R_j$ and the SVD of $R_j\Cbb_j$ in \Cref{alg:indices_adapt}, whose complexity is $O(\nh\nr\nd w)$, and thus independent of $r$. Moreover, in this experiment, there is no significant advantage in fixing the tolerance $\tau_r$ smaller than $10^{-8}$ and, correspondingly, performing a DEIM basis update of rank larger than $40$.
\begin{table}[H]
\footnotesize
\caption{{Error and computational times of the hyper-reduced model for different rank tolerances $\tau_r$.}}\label{tab:taur}
    \centering
    \begin{tabular}{cc|ccc|c|c|}\cline{3-7}
    & & $t_{\off}$ & $t_{\on}$ & $t_{\tot}$ & $\Ecal_{L^2}^{\HR}$ & $\Delta \Ham_{\nt}$ \\
    \hline
    \multicolumn{2}{|c|}{Non-adaptive} & 152.77 & 38.52 & 191.29 & 1.04e+00 & 2.03e+00 \\\hline
    \multicolumn{1}{|c}{\multirow{5}{*}{\begin{tabular}[c]{@{}c@{}}$\nd_s=250$\\ $\delta_0=\delta=5$\\ $w=1$\\$\gamma=\delta$\end{tabular}}} & $\tau_r=10^{-4}$ & 5.44 & 54.83 & 60.27 & 3.04e-03 & 7.09e-02 \\
    \multicolumn{1}{|c}{} & $\tau_r=10^{-6}$ & 5.44 & 55.17 & 60.61 & 1.09e-04 & 2.37e-04 \\
    \multicolumn{1}{|c}{} & $\tau_r=10^{-8}$ & 5.44 & 55.30 & 60.74 & 6.44e-05 & 6.88e-04 \\
    \multicolumn{1}{|c}{} & $\tau_r=10^{-10}$ & 5.44 & 55.46 & 60.90 & 6.17e-05 & 8.04e-04 \\
    \multicolumn{1}{|c}{} & $\tau_r=10^{-12}$ & 5.44 & 55.58 & 61.02 & 6.03e-05 & 7.21e-04 \\ \hline
    \end{tabular}
\end{table}

\textbf{Effect of the number $\nd_s$ of sampling points.}
Analogously to the rank of the update, instead of fixing $\nd_s$ we may select the number of sampling indices based on a tolerance $\tau_s$. More precisely, an index is chosen at the $j$th adaptation step if the $2$-norm of the corresponding row of $R_j\Cbb_j$ is larger than $\tau_s$. In this way, the number of sampling indices changes at each adaptation step and it is fixed to be at least equal to $\nd$.
The results are reported in \Cref{tab:taus}.
The number of sampling indices selected at each adaptation for different choices of the tolerance is shown in \Cref{fig:rj_and_msj} (right). As $\nd_s$ increases, the error improves while the computational time is almost independent of the value of $\nd_s$, and the total time required by the adaptive algorithm is less than one third of the time needed by the non-adaptive algorithm.
\begin{table}[H]
\footnotesize
\caption{Error and computational times of the hyper-reduced model for different sampling tolerances~$\tau_s$.}\label{tab:taus}
    \centering
    \begin{tabular}{cc|ccc|c|c|}\cline{3-7}
     & & $t_{\off}$ & $t_{\on}$ & $t_{\tot}$ & $\Ecal_{L^2}^{\HR}$ & $\Delta \Ham_{\nt}$ \\
     \hline
    \multicolumn{2}{|c|}{Non-adaptive} & 152.77 & 38.52 & 191.29 & 1.04e+00 & 2.03e+00 \\\hline
    \multicolumn{1}{|c}{\multirow{5}{*}{\begin{tabular}[c]{@{}c@{}}$\delta_0=\delta=5$\\ $w=1$\\ $r=\overline{r}$\\$\gamma=\delta$\end{tabular}}} & $\tau_s=10^{-4}$ & 5.50 & 53.32 & 58.82 & 3.94e-04 & 1.12e-02 \\
    \multicolumn{1}{|c}{} & $\tau_s=10^{-6}$ & 5.50 & 53.81 & 59.31 & 3.94e-04 & 1.12e-02 \\
    \multicolumn{1}{|c}{} & $\tau_s=10^{-8}$ & 5.50 & 53.82 & 59.32 & 2.86e-04 & 5.18e-03 \\
    \multicolumn{1}{|c}{} & $\tau_s=10^{-10}$ & 5.50 & 54.42 & 59.92 & 5.99e-05 & 9.80e-05 \\
    \multicolumn{1}{|c}{} & $\tau_s=10^{-12}$ & 5.50 & 55.49 & 60.99 & 4.70e-05 & 1.52e-04 \\ \hline
    \end{tabular}
\end{table}

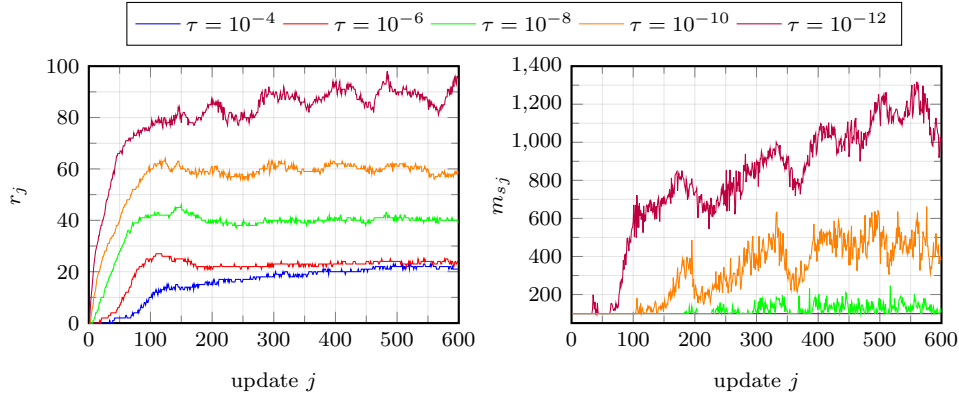
\begin{figure}[H]
\centering
\begin{tikzpicture}
    \begin{groupplot}[
      group style={group size=2 by 3,
                  horizontal sep=1.5cm},
      width=6.5cm, height=5cm
    ]
      \nextgroupplot[ylabel={$r_j$},
                  xlabel={update $j$},
                  ylabel style = {yshift=-.3cm},
                  axis line style = thick,
                  grid=both,
                  minor tick num=1,
                  max space between ticks=20,
                  grid style = {gray,opacity=0.2},
                  xmin=0, xmax=600,
                  ymin = 0, ymax = 100,
                  xlabel style={font=\footnotesize},
                  ylabel style={font=\footnotesize},
                  x tick label style={font=\footnotesize},
                  y tick label style={font=\footnotesize},
                  legend style={font=\footnotesize},
                  legend columns = 5,
                  legend style={at={(1.15,1.25)},anchor=north}]
        \addplot+[color=blue,mark=none] table[x=adapt,y=taum4] {rank_adapt_sc.txt};
        \addplot+[color=red,mark=none] table[x=adapt,y=taum6] {rank_adapt_sc.txt};
        \addplot+[color=green,mark=none] table[x=adapt,y=taum8] {rank_adapt_sc.txt};
        \addplot+[color=orange,mark=none] table[x=adapt,y=taum10] {rank_adapt_sc.txt};
        \addplot+[color=purple,mark=none] table[x=adapt,y=taum12] {rank_adapt_sc.txt};
        \legend{{$\tau=10^{-4}$},{$\tau=10^{-6}$},{$\tau=10^{-8}$},{$\tau=10^{-10}$},{$\tau=10^{-12}$}};
        \nextgroupplot[ylabel={${\nd_s}_j$},
                  xlabel={update $j$},
                  ylabel style = {yshift=-.3cm},
                  axis line style = thick,
                  grid=both,
                  minor tick num=1,
                  max space between ticks=20,
                  grid style = {gray,opacity=0.2},
                  xmin=0, xmax=600,
                  ymin = 50, ymax = 1400,
                  ytick = {200,400,600,800,1000,1200,1400},
                  xlabel style={font=\footnotesize},
                  ylabel style={font=\footnotesize},
                  x tick label style={font=\footnotesize},
                  y tick label style={font=\footnotesize},
                  legend style={font=\footnotesize},
                  legend columns = 2,
                  legend style={at={(0.5,1.45)},anchor=north}]
        \addplot+[color=blue,mark=none] table[x=adapt,y=taum4] {ms_adapt_sc.txt};
        \addplot+[color=orange,mark=none] table[x=adapt,y=taum10] {ms_adapt_sc.txt};
        \addplot+[color=red,mark=none] table[x=adapt,y=taum6] {ms_adapt_sc.txt};
        \addplot+[color=purple,mark=none] table[x=adapt,y=taum12] {ms_adapt_sc.txt};
        \addplot+[color=green,mark=none] table[x=adapt,y=taum8] {ms_adapt_sc.txt};
    \end{groupplot}
\end{tikzpicture}
\caption{NLS. Left: Rank $r_j=\text{rank}(S_j^\top R_jC_j^\top)$ at every update $j$ for different values of $\tau_t=\tau$. Right: Number of sampling indices ${\nd_s}_j$ at every update $j$ for different values of $\tau_s=\tau$.} 
\label{fig:rj_and_msj}
\end{figure}

\textbf{Effect of the size $w$ of the time window to evaluate the nonlinear term.}
The role of $w$ in the adaptation is described in \Cref{sec:adaptive_DEIM}.
In this test, we slightly increase the values of $\delta_0$ and $\delta$ and set them equal to $10$. The reason is that it must hold $w<\delta_0$, and, moreover, 
it is reasonable to require that $w<\delta$, that is, at each adaptation step, the temporal window only contains snapshots obtained after the last update.
In \Cref{tab:w} we observe that, as expected, the error $\Ecal_{L^2}^{\HR}$ decreases as more information from the past is included in the construction of the DEIM basis update. In parallel, the increase of the window size requires larger computational times in the online phase.
However, it can be noticed that the online phase in the case $w=1$ is only slightly more computationally expensive than the online phase of the non-adaptive algorithm, while the $L^2$-error is reduced by almost four orders of magnitude.
\begin{table}[H]
\footnotesize
\caption{Error and computational times of the hyper-reduced model for different window sizes $w$.}\label{tab:w}
    \centering
    \begin{tabular}{cc|ccc|c|c|}\cline{3-7}
     & & $t_{\off}$ & $t_{\on}$ & $t_{\tot}$ & $\Ecal_{L^2}^{\HR}$ & $\Delta \Ham_{\nt}$ \\ \hline
    \multicolumn{2}{|c|}{Non-adaptive} & \footnotesize 152.77 & 38.52 & 191.29 & 1.04e+00 & 2.03e+00 \\\hline
    \multicolumn{1}{|c}{\multirow{4}{*}{\begin{tabular}[c]{@{}c@{}}$\nd_s=250$\\ $\delta_0=\delta=10$\\ $r=\overline{r},\, \gamma=\delta$\end{tabular}}} & $w=1$ & 8.91 & 45.01 & 53.92 & 1.31e-04 & 2.55e-03 \\
    \multicolumn{1}{|c}{} & $w=2$ & 8.91 & 50.82 & 59.73 & 1.13e-04 & 2.67e-03  \\
    \multicolumn{1}{|c}{} & $w=4$ & 8.91 & 60.13 & 69.04 & 7.44e-05 & 1.30e-04  \\
    \multicolumn{1}{|c}{} & $w=6$ & 8.91 & 69.73 & 78.64 & 6.91e-05 & 1.06e-03 \\ \hline
    \end{tabular}
\end{table}

\textbf{Effect of the number $\delta$ of time steps between each DEIM adaptation.}
Predictably, the online phase is cheaper for large $\delta$ since the adaptations are less frequent, while the error increases, as reported in \Cref{tab:delta}.

\begin{table}[H]
\footnotesize
\caption{Error and computational times of the hyper-reduced model. Effect of the number $\delta$ of time steps  between each update of the DEIM basis.}\label{tab:delta}
    \centering
    \begin{tabular}{cc|ccc|c|c|}\cline{3-7}
     & & $t_{\off}$ & $t_{\on}$ & $t_{\tot}$ & $\Ecal_{L^2}^{\HR}$ & $\Delta \Ham_{\nt}$ \\ \hline
    \multicolumn{2}{|c|}{Non-adaptive} & 152.77 & 38.52 & 191.29 & 1.04e+00 & 2.03e+00 \\\hline
   \multicolumn{1}{|c}{\multirow{3}{*}{\begin{tabular}[c]{@{}c@{}}$\nd_s=250$\\ $\delta_0=5,\,w=1$\\ $r=\overline{r},\,\gamma=\delta$\end{tabular}}} & $\delta=5$ & 5.55 & 55.05 & 60.60 & 6.16e-05 & 5.31e-04 \\
   \multicolumn{1}{|c}{} & $\delta=10$ & 5.55 & 46.40 & 51.95 & 1.40e-04 & 3.08e-03 \\
   \multicolumn{1}{|c}{} & $\delta=20$ & 5.55 & 41.00 & 46.55 & 2.50e-03 & 4.99e-02 \\ \hline
    \end{tabular}
\end{table}

\textbf{Effect of the number of time steps $\gamma$ between each update of the sampling indices.}
In view of the discussion on the arithmetic complexity of sampling in \Cref{sec:samp_points_update}, it might seem preferable, in terms of computational cost, to update the sampling indices every $\gamma>1$ time steps. 
Indeed, the larger $\gamma$, the less frequently \Cref{alg:indices_adapt} is run, including the expensive computation of the full residual.
In this test we consider $\gamma=\nu\delta$, i.e. the sampling points are updated every $\nu$ DEIM adaptations. 
We observe in \Cref{tab:gamma} that the error 
of the hyper-reduced solution decreases as $\gamma$ is halved, with an increase in the runtime of the online phase of less than $5$ seconds.
\begin{table}[H]
\footnotesize
\caption{Error and computational times of the hyper-reduced model. Effect of the number $\gamma$ of time steps  between each update of the sampling indices.}\label{tab:gamma}
    \centering
    \begin{tabular}{cc|ccc|c|c|}\cline{3-7}
     & & $t_{\off}$ & $t_{\on}$ & $t_{\tot}$ & $\Ecal_{L^2}^{\HR}$ & $\Delta \Ham_{\nt}$ \\ \hline
    \multicolumn{2}{|c|}{Non-adaptive} & 152.77 & 38.52 & 191.29 & 1.04e+00 & 2.03e+00 \\\hline
    \multicolumn{1}{|c}{\multirow{4}{*}{\begin{tabular}[c]{@{}c@{}}$\nd_s=250$\\$\delta_0=\delta=5$\\ $w=1$\\ $r=\overline{r}$\end{tabular}}} & $\gamma=5$ & 5.51 & 55.15 & 60.66 & 6.16e-05 & 5.31e-04 \\
    \multicolumn{1}{|c}{} & $\gamma=10$ & 5.51 & 50.35 & 55.86 & 1.11e-04 & 2.04e-03 \\
    \multicolumn{1}{|c}{} & $\gamma=20$ & 5.51 & 48.20 & 53.71 & 8.59e-04 & 1.83e-02 \\
    \multicolumn{1}{|c}{} & $\gamma=30$ & 5.51 & 47.56 & 53.07 & 4.58e-03 & 9.90e-02 \\ \hline
    \end{tabular}
\end{table}

Although the behavior of the adaptive hyper-reduction algorithm under variation of the hyper-parameters is clearly problem-dependent, the numerical tests presented above suggest the following conclusions. Increasing the rank $r$ of the update and the number $\nd_s$ of sampling points leads to a significant reduction of the error without any major effect on the computational time. On the other hand, the optimal values of $r$ and $\nd_s$ may depend on the adaptation step $j$ and, hence, it seems preferable to let them vary in time according to a tolerance. The effect of $w$ on the error is less evident: although the error decreases as $w$ increases, the fast growth of the computational time $t_{\on}$ outweighs this gain.
Finally, small values of $\delta$ and $\gamma$ imply frequent adaptations of both the DEIM basis and sampling indices, and thus reduction in the error, at a mild increase of the computational cost of the simulation.

As a last numerical test, we fix the values of the
hyper-parameters and compare the performances of the non-adaptive and adaptive hyper-reduction algorithms as the dimension $\nd$ of the DEIM space varies, for different choices of the reduced dimension $\Nr$.
We set the hyper-parameters as follows: the tolerance in the choice of the update rank is $\tau_r=10^{-12}$; the tolerance in the selection of the sampling points is $\tau_s=10^{-10}$; the temporal window size is $w=1$; the frequency of the updates is $\delta=\gamma=5$.
In \Cref{fig:error_vs_ctime_2k400} we report the error $\Ecal_{L^2}^{\HR}$ between the hyper-reduced solution and the full model solution vs. $\nd$ (left plot) and vs. the algorithm runtime (right plot) for $\Nr=400$.
In \Cref{fig:error_vs_ctime_2k}, we study the computational cost of the hyper-reduction algorithms for two more dimensions of the reduced space, namely for $\Nr\in\{160,280\}$. The reported times are computed as averages over $5$ runs.

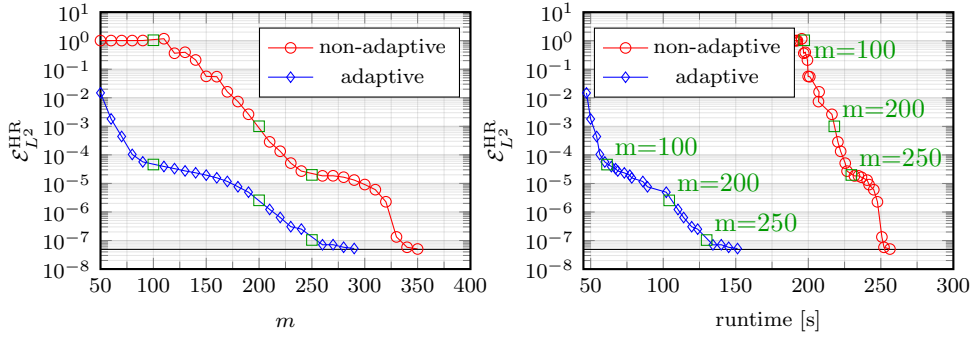
\begin{figure}[H]
\centering
\begin{tikzpicture}
    \begin{groupplot}[
      group style={group size=2 by 3,
                  horizontal sep=1.5cm},
      width=6.5cm, height=5cm
    ]
      \nextgroupplot[ylabel={$\Ecal_{L^2}^{\HR}$},
                  xlabel={$\nd$},
                  ylabel style = {yshift=-.2cm},
                  axis line style = thick,
                  grid=both,
                  minor tick num=1,
                  max space between ticks=20,
                  grid style = {gray,opacity=0.2},
                  xmin=50, xmax=400,
                  ymin = 1e-08, ymax = 1e+01,
                  ymode=log,
                  ytick = {1e-08,1e-07,1e-06,1e-05,1e-04,1e-03,1e-02,1e-01,1e+00,1e+01},
                  xlabel style={font=\footnotesize},
                  ylabel style={font=\footnotesize},
                  x tick label style={font=\footnotesize},
                  y tick label style={font=\footnotesize},                  
                  legend style={font=\footnotesize},
                  legend columns = 1,
                  legend style={at={(0.7,0.94)},anchor=north}]
        \addplot+[mark=o,color=red] [select coords between index={0}{27}] table[x=m,y=errnonad] {error_vs_m_and_ctime.txt};
        \addplot+[mark=diamond,color=blue] [select coords between index={0}{21}] table[x=m,y=errad] {error_vs_m_and_ctime.txt};
        \addplot+[mark=square,color=green!60!black,only marks] table[x=mnonad,y=errnonad] {error_vs_m_diffmarks.txt};
        \addplot+[mark=square,color=green!60!black,only marks] table[x=mad,y=errad] {error_vs_m_diffmarks.txt};
        \addplot+[color=black,mark=none] table[x=m,y=errred] {error_vs_m_and_ctime.txt};
        \legend{{non-adaptive}, {adaptive}};
        \nextgroupplot[ylabel={$\Ecal_{L^2}^{\HR}$},
                  xlabel={runtime [s]},
                  xlabel style = {yshift=.1cm},
                  ylabel style = {yshift=-.1cm},
                  axis line style = thick,
                  grid=both,
                  minor tick num=1,
                  max space between ticks=20,
                  grid style = {gray,opacity=0.2},
                  xmin=45, xmax=300,
                  ymin = 1e-08, ymax = 1e+01,
                  ymode = log,
                  ytick = {1e-08,1e-07,1e-06,1e-05,1e-04,1e-03,1e-02,1e-01,1e+00,1e+01},
                  xlabel style={font=\footnotesize},
                  ylabel style={font=\footnotesize},
                  x tick label style={font=\footnotesize},
                  y tick label style={font=\footnotesize},
                  legend style={font=\footnotesize},
                  legend columns = 1,
                  legend style={at={(0.3,0.94)},anchor=north}]
        \addplot+[mark=o,color=red] [select coords between index={0}{27}] table[x=tnonad,y=errnonad] {error_vs_m_and_ctime.txt};
        \addplot+[mark=diamond,color=blue] [select coords between index={0}{21}] table[x=tad,y=errad] {error_vs_m_and_ctime.txt};
        \addplot+[mark=square,color=green!60!black,nodes near coords,point meta=explicit symbolic,only marks,visualization depends on=\thisrow{alignmentNA} \as \alignment, nodes near coords,point meta=explicit symbolic,every node near coord/.style={anchor=\alignment}] table[x=tnonad,y=errnonad,meta index=4] {error_vs_ctime_diffmarks.txt};
        \addplot+[mark=square,color=green!60!black,nodes near coords,point meta=explicit symbolic,only marks,visualization depends on=\thisrow{alignmentA} \as \alignment, nodes near coords,point meta=explicit symbolic,every node near coord/.style={anchor=\alignment}] table[x=tad,y=errad,meta index=4] {error_vs_ctime_diffmarks.txt};
        \addplot+[color=black,mark=none] table[x=xred,y=errred] {error_vs_ctime_diffmarks.txt};
        \legend{{non-adaptive},
          {adaptive}};
    \end{groupplot}
\end{tikzpicture}
\vspace{-1em}
\caption{NLS. Reduced model of size $\Nr=400$.
Left: error $\Ecal_{L^2}^{\HR}$ vs. DEIM size $\nd$.  Right: $\Ecal_{L^2}^{\HR}$ vs. total algorithm runtime for different values of $\nd$. The black line represents the error between the reduced solution and the full solution. The computational time to solve the reduced model is $230$\,s. The computational time of the offline phase of the hyper-reduction is $152.8$\,s in the non-adaptive case, $5.5$\,s in the adaptive case.}\label{fig:error_vs_ctime_2k400}
\end{figure}
\begin{figure}[H]
\centering
\begin{tikzpicture}
    \begin{groupplot}[
      group style={group size=2 by 3,
                  horizontal sep=1.5cm},
      width=6.5cm, height=5cm
    ]
              \nextgroupplot[ylabel={$\Ecal_{L^2}^{\HR}$},
                  xlabel={runtime [s]},
                  xlabel style = {yshift=.1cm},
                  ylabel style = {yshift=-.1cm},
                  axis line style = thick,
                  title = {$\Nr=160$},
                  title style={font=\footnotesize},
                  grid=both,
                  minor tick num=1,
                  max space between ticks=20,
                  grid style = {gray,opacity=0.2},
                  xmin=5, xmax=90,
                  ymin = 1e-03, ymax = 1e+01,
                  ymode = log,
                  ytick = {1e-03,1e-02,1e-01,1e+00,1e+01},
                  xlabel style={font=\footnotesize},
                  ylabel style={font=\footnotesize},
                  x tick label style={font=\footnotesize},
                  y tick label style={font=\footnotesize},
                  legend style={font=\footnotesize},
                  legend columns = 1,
                  legend style={at={(0.3,0.96)},anchor=north}]
        \addplot+[mark=o,color=red] [select coords between index={0}{9}] table[x=tnonad,y=errnonad] {error_vs_m_and_ctime_2k160.txt};
        \addplot+[mark=diamond,color=blue] [select coords between index={16}{16}] table[x=tad,y=errad] {error_vs_m_and_ctime_2k160.txt};
        \addplot+[color=black,mark=none] table[x=xred,y=errred] {error_vs_ctime_diffmarks_2k160.txt};
        \addplot+[mark=square,color=green!60!black,nodes near coords,point meta=explicit symbolic,only marks,visualization depends on=\thisrow{alignmentNA} \as \alignment, nodes near coords,point meta=explicit symbolic,every node near coord/.style={anchor=\alignment}] [select coords between index={2}{4}]
        table[x=tnonad,y=errnonad,meta index=4] {error_vs_ctime_diffmarks_2k160.txt};
        \addplot+[mark=square,mark options={green!60!black},nodes near coords,point meta=explicit symbolic,only marks,visualization depends on=\thisrow{alignmentA} \as \alignment, nodes near coords,point meta=explicit symbolic,every node near coord/.style={anchor=\alignment}] [select coords between index={0}{1}] table[x=tad,y=errad,meta index=5] {error_vs_ctime_diffmarks_2k160.txt};
        \addplot+[color=blue,solid,mark=none] [select coords between index={11}{14}] table[x=tad,y=errad] {error_vs_m_and_ctime_2k160.txt};
        \addplot+[color=red,solid,mark=none] [select coords between index={0}{10}] table[x=tnonad,y=errnonad] {error_vs_m_and_ctime_2k160.txt};
        \legend{{non-adaptive}, {adaptive}};
        \nextgroupplot[ylabel={$\Ecal_{L^2}^{\HR}$},
                  xlabel={runtime [s]},
                  xlabel style = {yshift=.1cm},
                  ylabel style = {yshift=-.1cm},
                  title = {$\Nr=280$},
                  title style={font=\footnotesize},
                  axis line style = thick,
                  grid=both,
                  minor tick num=1,
                  max space between ticks=20,
                  grid style = {gray,opacity=0.2},
                  xmin=15, xmax=150,
                  ymin = 1e-06, ymax = 1e+01,
                  ymode = log,
                  ytick = {1e-06,1e-05,1e-04,1e-03,1e-02,1e-01,1e+00,1e+01},
                  xlabel style={font=\footnotesize},
                  ylabel style={font=\footnotesize},
                  x tick label style={font=\footnotesize},
                  y tick label style={font=\footnotesize},
                  legend style={font=\footnotesize},
                  legend columns = 1,
                  legend style={at={(0.3,0.95)},anchor=north}]
        \addplot+[mark=o,color=red] [select coords between index={0}{18}] table[x=tnonad,y=errnonad] {error_vs_m_and_ctime_2k280.txt};
        \addplot+[mark=diamond,color=blue] [select coords between index={0}{9}] table[x=tad,y=errad] {error_vs_m_and_ctime_2k280.txt};
        \addplot+[mark=square,color=green!60!black,nodes near coords,point meta=explicit symbolic,only marks,visualization depends on=\thisrow{alignmentNA} \as \alignment, nodes near coords,point meta=explicit symbolic,every node near coord/.style={anchor=\alignment}] table[x=tnonad,y=errnonad,meta index=4] {error_vs_ctime_diffmarks_2k280.txt};
        \addplot+[mark=square,color=green!60!black,nodes near coords,point meta=explicit symbolic,only marks,visualization depends on=\thisrow{alignmentA} \as \alignment, nodes near coords,point meta=explicit symbolic,every node near coord/.style={anchor=\alignment}] [select coords between index={0}{2}] table[x=tad,y=errad,meta index=4] {error_vs_ctime_diffmarks_2k280.txt};
        \addplot+[color=black,mark=none] table[x=xred,y=errred] {error_vs_ctime_diffmarks_2k280.txt};
        \legend{{non-adaptive},
          {adaptive}};
    \end{groupplot}
\end{tikzpicture}
\caption{NLS. Hyper-reduction error $\Ecal_{L^2}^{\HR}$ vs. total algorithm runtime for different values of $\nd$. Reduced model of size $\Nr=160$ (left) and $\Nr=280$ (right).
For $\Nr=160$: the computational time to solve the reduced model is $66$\,s; the offline phase of the hyper-reduction takes $69.7$\,s in the non-adaptive case and $0.5$\,s in the adaptive case.
For $\Nr=280$: the computational time to solve the reduced model is $126$\,s;
the offline phase of the hyper-reduction takes $109.6$\,s in the non-adaptive case and $1.2$\,s in the adaptive case.}
\label{fig:error_vs_ctime_2k}
\end{figure}
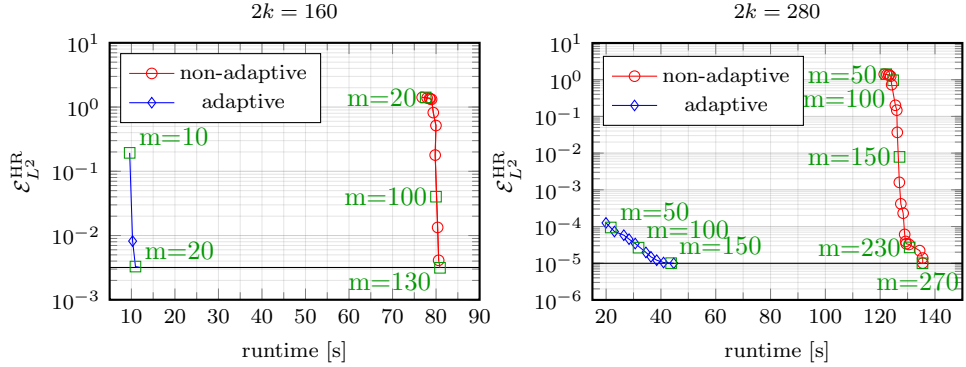

\Cref{fig:error_vs_ctime_2k400,fig:error_vs_ctime_2k}
show that, to achieve a given error, the adaptive scheme allows a smaller DEIM basis compared to the non-adaptive one. This translates in a reduced computational cost, despite the fact that the adaptive algorithm requires extra online operations to perform the DEIM update. This gain in the total runtime is also due to the fact that a more expensive online phase for adaptation is compensated by a cheaper offline phase compared to the non-adaptive strategy. As an example, the total runtime is reduced by a factor $3$ for $\nd=100$ when $\Nr=400$, and by a factor $4$ when $\Nr=280$. 
Moreover, for a given DEIM size $\nd$, the adaptive scheme yields smaller errors than the non-adaptive one: for example, for $\nd=200$, the error in the adaptive case is almost three orders of magnitude lower when $\Nr=400$, and a similar gain is observed for $\nd=150$ when $\Nr=280$.
Finally, when the dimension of the reduced space is $\Nr=400$, the hyper-reduced model achieves the error of the reduced model for $\nd=290$ in the adaptive case, and for $\nd=350$ in the non-adaptive one; for these values of $\nd$, the adaptive algorithm requires about $40\%$ less runtime than the non-adaptive scheme. This gain in computational efficiency becomes more evident when the reduced space is smaller. For $\Nr=280$, the DEIM modes required to achieve the reference error in the adaptive case are about half of those needed in the non-adaptive case ($160$ vs. $270$), with an approximately $70\%$ saving of the total computational time. With an even smaller reduced space, $\Nr=160$, the adaptive hyper-reduced model converges to the reduced model error for $m=25$, compared to $m=130$ in the non-adaptive case, and the total runtime is about $85\%$ smaller; even comparing the online phases alone, solving the adaptive scheme with $m=20$ is about as cheap as solving the non-adaptive scheme for $m=80$, while the error is over $2$ orders of magnitude smaller.
Finally, these numerical experiments show that the beneficial properties of the proposed adaptive strategy become more apparent when the hyper-reduction is performed on a relatively small reduced space. This is related to the decay of the singular values of the reduced Jacobian highlighted in \Cref{fig:singvalues}.

\section{Concluding remarks}\label{sec:conclusions}
We have proposed a hyper-reduction method to deal with large-scale parametric dynamical systems characterized by nonlinear gradient fields.
Its combination with symplectic reduced basis methods enables efficient numerical simulations of Hamiltonian systems with general nonlinearities. Indeed the resulting hyper-reduced models can be solved at a computational cost independent of the size of the full model and without compromising the physical properties of the dynamics thanks to the preservation of the gradient structure of the velocity field.
Although we have focused on Hamiltonian dynamical systems, the method can be applied to any gradient field, in e.g. gradient flows or in port-Hamiltonian problems.

Some questions remain open.
A rigorous connection between the reducibility properties of the solution space, of the space of nonlinear gradient fields and of the space of reduced Jacobian matrices is not understood yet. This could provide useful insights on when the simulation of a system can significantly benefit from hyper-reduction and whether an adaptive approach is to be preferred over a global one.

Moreover, our hyper-reduction strategy hinges on the properties of the reduced space. We observe that a larger reduced space improves the approximation of the state but demands a larger DEIM basis for accurate hyper-reduction. A natural idea to address this shortcoming is to consider symplectic reduced spaces that can evolve in time, as done e.g. in \cite{P19}. This would allow us to deal with small reduced spaces and hence highly reducible Jacobian matrices (i.e. small DEIM bases as well). The combination of evolving reduced spaces with adaptive hyper-reduction provides an interesting possible direction of future investigation.

\section*{Acknowledgments}
CP would like to thank Benjamin Peherstorfer for interesting discussions on adaptive DEIM preliminary to this work.

\bibliographystyle{siamplain}
\bibliography{references}

\begin{thebibliography}{10}

\bibitem{AH17}
{\sc B.~M. Afkham and J.~S. Hesthaven}, {\em Structure preserving model
  reduction of parametric {H}amiltonian systems}, SIAM J. Sci. Comput., 39
  (2017), pp.~A2616--A2644.

\bibitem{BMNP04}
{\sc M.~Barrault, Y.~Maday, N.~C. Nguyen, and A.~T. Patera}, {\em An `empirical
  interpolation' method: application to efficient reduced-basis discretization
  of partial differential equations}, C. R. Math. Acad. Sci. Paris, 339 (2004),
  pp.~667--672.

\bibitem{BBH19}
{\sc P.~Buchfink, A.~Bhatt, and B.~Haasdonk}, {\em Symplectic model order
  reduction with non-orthonormal bases}, Math. Comput. Appl., 24 (2019),
  pp.~Paper No. 43, 26.

\bibitem{BHR20}
{\sc P.~Buchfink, B.~Haasdonk, and S.~Rave}, {\em {PSD}-{G}reedy basis
  generation for structure-preserving model order reduction of {H}amiltonian
  systems}, Proceedings of the Conference Algoritmy,  (2020), pp.~151--160.

\bibitem{CdS01}
{\sc A.~Cannas~da Silva}, {\em Lectures on symplectic geometry}, vol.~1764 of
  Lecture Notes in Mathematics, Springer-Verlag, Berlin, 2001.

\bibitem{CTB15}
{\sc K.~Carlberg, R.~Tuminaro, and P.~Boggs}, {\em Preserving {L}agrangian
  structure in nonlinear model reduction with application to structural
  dynamics}, SIAM J. Sci. Comput., 37 (2015), pp.~B153--B184.

\bibitem{CELL12}
{\sc E.~Celledoni, V.~Grimm, R.~I. McLachlan, D.~I. McLaren, D.~O'Neale,
  B.~Owren, and G.~R.~W. Quispel}, {\em Preserving energy resp. dissipation in
  numerical {PDE}s using the ``average vector field'' method}, J. Comput.
  Phys., 231 (2012), pp.~6770--6789.

\bibitem{CBG16}
{\sc S.~Chaturantabut, C.~Beattie, and S.~Gugercin}, {\em Structure-preserving
  model reduction for nonlinear port-{H}amiltonian systems}, SIAM J. Sci.
  Comput., 38 (2016), pp.~B837--B865.

\bibitem{CS10}
{\sc S.~Chaturantabut and D.~C. Sorensen}, {\em Nonlinear model reduction via
  discrete empirical interpolation}, SIAM J. Sci. Comput., 32 (2010),
  pp.~2737--2764.

\bibitem{CS12}
{\sc S.~Chaturantabut and D.~C. Sorensen}, {\em A state space error estimate
  for {POD}-{DEIM} nonlinear model reduction}, SIAM J. Numer. Anal., 50 (2012),
  pp.~46--63.

\bibitem{DG16}
{\sc Z.~Drma{\v{c}} and S.~Gugercin}, {\em A new selection operator for the
  discrete empirical interpolation method---improved a priori error bound and
  extensions}, SIAM J. Sci. Comput., 38 (2016), pp.~A631--A648.

\bibitem{FCA15}
{\sc C.~Farhat, T.~Chapman, and P.~Avery}, {\em Structure-preserving,
  stability, and accuracy properties of the energy-conserving sampling and
  weighting method for the hyper reduction of nonlinear finite element dynamic
  models}, Internat. J. Numer. Methods Engrg., 102 (2015), pp.~1077--1110.

\bibitem{GWW17}
{\sc Y.~Gong, Q.~Wang, and Z.~Wang}, {\em Structure-preserving {G}alerkin {POD}
  reduced-order modeling of {H}amiltonian systems}, Comput. Methods Appl. Mech.
  Engrg., 315 (2017), pp.~780--798.

\bibitem{Gro19}
{\sc T.~H. Gronwall}, {\em Note on the derivatives with respect to a parameter
  of the solutions of a system of differential equations}, Ann. of Math. (2),
  20 (1919), pp.~292--296.

\bibitem{HLW06}
{\sc E.~Hairer, C.~Lubich, and G.~Wanner}, {\em Geometric numerical
  integration}, vol.~31 of Springer Series in Computational Mathematics,
  Springer-Verlag, Berlin, second~ed., 2006.

\bibitem{HMT11}
{\sc N.~Halko, P.~G. Martinsson, and J.~A. Tropp}, {\em Finding structure with
  randomness: Probabilistic algorithms for constructing approximate matrix
  decompositions}, SIAM Review, 53 (2011), pp.~217--288.

\bibitem{HP20}
{\sc J.~S. Hesthaven and C.~Pagliantini}, {\em Structure-preserving reduced
  basis methods for {P}oisson systems}, Math. Comp., 90 (2021), pp.~1701--1740.

\bibitem{HPR22}
{\sc J.~S. Hesthaven, C.~Pagliantini, and N.~Ripamonti}, {\em Adaptive
  symplectic model order reduction of parametric particle-based
  {V}lasov-{P}oisson equation}, 2022, \url{https://arxiv.org/abs/2201.05555}.

\bibitem{HPRo22}
{\sc J.~S. Hesthaven, C.~Pagliantini, and G.~Rozza}, {\em Reduced basis methods
  for time-dependent problems}, Acta Numerica, 31 (2022), p.~265–345.

\bibitem{LKM03}
{\sc S.~Lall, P.~Krysl, and J.~E. Marsden}, {\em Structure-preserving model
  reduction for mechanical systems}, Phys. D, 184 (2003), pp.~304--318.

\bibitem{MR99}
{\sc J.~E. Marsden and T.~S. Ratiu}, {\em Introduction to mechanics and
  symmetry}, vol.~17 of Texts in Applied Mathematics, Springer-Verlag, New
  York, second~ed., 1999.

\bibitem{P19}
{\sc C.~Pagliantini}, {\em Dynamical reduced basis methods for {H}amiltonian
  systems}, Numer. Math., 148 (2021), pp.~409--448.

\bibitem{Peher20}
{\sc B.~Peherstorfer}, {\em Model reduction for transport-dominated problems
  via online adaptive bases and adaptive sampling}, SIAM J. Sci. Comput., 42
  (2020), pp.~A2803--A2836.

\bibitem{Peher15}
{\sc B.~Peherstorfer and K.~Willcox}, {\em Online adaptive model reduction for
  nonlinear systems via low-rank updates}, SIAM J. Sci. Comput., 37 (2015),
  pp.~A2123--A2150.

\bibitem{PM16}
{\sc L.~Peng and K.~Mohseni}, {\em Symplectic model reduction of {H}amiltonian
  systems}, SIAM J. Sci. Comput., 38 (2016), pp.~A1--A27.

\bibitem{QMcL08}
{\sc G.~R.~W. Quispel and D.~I. McLaren}, {\em A new class of energy-preserving
  numerical integration methods}, J. Phys. A, 41 (2008), pp.~045206, 7.

\bibitem{SWK21}
{\sc H.~Sharma, Z.~Wang, and B.~Kramer}, {\em {H}amiltonian operator inference:
  Physics-preserving learning of reduced-order models for {H}amiltonian
  systems}, Physica {D}: {N}onlinear {P}henomena, 431 (2022), p.~133122.

\bibitem{RS13}
{\sc S.~Sultana and Z.~Rahman}, {\em Hamiltonian formulation for water wave
  equation}, Open Journal of Fluid Dynamics, 3 (2013), pp.~75--81.

\bibitem{UKY21}
{\sc M.~Uzunca, B.~Karasözen, and S.~Yıldız}, {\em Structure-preserving
  reduced-order modeling of {K}orteweg-de {V}ries equation}, Mathematics and
  Computers in Simulation, 188 (2021), pp.~193--211.

\bibitem{Wang21}
{\sc Z.~Wang}, {\em Structure-preserving {G}alerkin {POD}-{DEIM} reduced-order
  modeling of {H}amiltonian systems}, 2021,
  \url{https://arxiv.org/abs/2103.00388}.

\end{thebibliography}
\end{document}